\newtheorem{thm}{Theorem}[section]
\newtheorem{lem}[thm]{Lemma}
\newtheorem{cor}[thm]{Corollary}
\newtheorem{prop}[thm]{Proposition}
\newtheorem{conj}[thm]{Conjecture}
\theoremstyle{definition}
\newtheorem{ex}[thm]{Example}
\theoremstyle{remark}
\newtheorem{rem}[thm]{Remark}
\numberwithin{equation}{section}
\newcommand{\thmref}[1]{Theorem~\ref{#1}}
\newcommand{\corref}[1]{Corollary~\ref{#1}}
\newcommand{\secref}[1]{\S\ref{#1}}
\newcommand{\conjref}[1]{Conjecture~\ref{#1}}
\newcommand{\propref}[1]{Proposition~\ref{#1}}
\newcommand{\lemref}[1]{Lemma~\ref{#1}}
\newcommand{\Hom}{\operatorname{Hom}}
\newcommand{\End}{\operatorname{End}}
\newcommand{\im}{\operatorname{im}}
\newcommand{\Gr}{\operatorname{Gr}}
\newcommand{\A}{{\mathcal  A}}
\newcommand{\Z}{{\mathbb  Z}}
\newcommand{\Q}{{\mathbb  Q}}
\newcommand{\R}{{\mathbb  R}}
\newcommand{\C}{{\mathbb  C}}
\newcommand{\D}{{\mathbb  D}}
\newcommand{\sm}{\wedge}
\newcommand{\ra}{\rightarrow}
\newcommand{\xra}{\xrightarrow}
\newcommand{\hra}{\hookrightarrow}
\DeclareMathOperator{\content}{c}
\DeclareMathOperator{\connectedcomponents}{cc}
\DeclareMathOperator{\corners}{C}
\begin{document}

\title[K-theory of Grassmanians]{Computing the Morava K-theory of real Grassmanians using chromatic fixed point theory.}

\author[Kuhn]{Nicholas J.~Kuhn}
\email{njk4x@virginia.edu}

\author[Lloyd]{Christopher J.~R.~Lloyd}
\email{cjl8zf@virginia.edu}

\address{Department of Mathematics \\ University of Virginia \\ Charlottesville, VA 22903}

\thanks{The first author is a PI of RTG NSF grant DMS-1839968, which partially supported the research of the second author.}

\date{November 16, 2021.}

\subjclass[2010]{Primary 55M35; Secondary 55N20, 55P42, 55P91, 57S17.}

\begin{abstract}
We study $K(n)^*(\Gr_d(\R^m))$, the 2-local Morava $K$-theories of the real Grassmanians, about which very little has been previously computed.  We conjecture that the Atiyah-Hirzebruch spectral sequences computing these all collapse after the first possible non-zero differential $d_{2^{n+1}-1}$, and give much evidence that this is the case.

We use a novel method to show that higher differentials can't occur:  we get a lower bound on the size of $K(n)^*(\Gr_d(\R^m))$ by constructing a $C_4$--action on our Grassmanians and then applying the chromatic fixed point theory of the authors' previous paper.  In essence, we bound the size of $K(n)^*(\Gr_d(\R^m))$ by computing $K(n-1)^*(\Gr_d(\R^m)^{C_4})$.

Meanwhile the size of $E_{2^{n+1}}$ is given by $Q_n$--homology, where $Q_n$ is Milnor's $n$th primitive mod 2 cohomology operation.  Whenever we are able to calculate this $Q_n$--homology, we have found that the size of $E_{2^{n+1}}$ agrees with our lower bound for the size of $K(n)^*(\Gr_d(\R^m))$.  We have two general families where we prove this: $m\leq 2^{n+1}$ and all $d$, and  $d=2$ and all $m$ and $n$.  Computer calculations have allowed us to check many other examples with larger values of $d$.

\end{abstract}

\maketitle

\newpage

\section{Introduction} \label{introduction}

Let $\Gr_d(\R^m)$ be the real Grassmanian of $k$-planes in $\R^m$, a much studied compact manifold of dimension $d(m-d)$ admitting the structure of a CW complex with $\binom{m}{d}$ `Schubert cells'.

Much is known about the ordinary cohomology of these spaces:
\begin{enumerate}
\item $H^*(\Gr_d(\R^{m});\Z/2)$ is generated by Stiefel--Whitney classes satisfying standard relations.  It has total dimension $\binom{m}{d}$.

\item $H^*(\Gr_d(\R^{m});\Q)$ is generated by Pontryagin classes, along with, in some cases, an odd dimensional class. For fixed $d$, and $\epsilon = $ 0 or 1, the total dimension of $H^*(\Gr_d(\R^{2-\epsilon + 2l});\Q)$ is polynomial of degree $\left\lfloor d/2 \right\rfloor$ as a function of $l \geq 0$.

\item If $m$ is even, then $\Gr_d(\R^m)$ is oriented. Furthermore, the inclusion $\Gr_d(\R^{m-1})\hra \Gr_d(\R^{m})$ induces an epimorphism in rational cohomology.

\item Nontrivial torsion in $H^*(\Gr_d(\R^{m});\Z)$ has order 2. The mod 2 Bockstein Spectral Sequence (BSS) collapses after the first differential. Equivalently, the mod 2 Adams Spectral Sequence (ASS) converging to $H^*(\Gr_d(\R^{m});\Z)$ collapses at $E_2$.
\end{enumerate}

Much less is known about other cohomology theories applied to these Grassmanians.  In this paper, we study $K(n)^*(\Gr_d(\R^{m}))$ for $n \geq 1$. Here $K(n)^*(X)$ denotes the 2-local $n$th Morava $K$-theory of a space $X$, a graded vector space over the graded field $K(n)_*= \Z/2[v_n^{\pm}]$ with $|v_n|=2^{n+1}-2$.  We let $k(n)$ denote the connective cover of $K(n)$: $k(n)_* = \Z/2[v_n]$.

Viewing $H\Q$ as $K(0)$ and $H\Z$ as $k(0)$, our discovery is that analogues of statements (2)--(4) above appear to hold for all $n$, with the Atiyah-Hirzebruch Spectral Sequence (AHSS) replacing the Bockstein Spectral Sequence in statement (4). Furthermore, the analogue of statement (1) holds through a much bigger range than one would expect from dimension considerations.

In the next two subsections, we describe our main results.

\subsection{Results proved using chromatic fixed point theory}
\mbox{}
\vspace{.1in}

Given a finite complex $X$ and $n\geq 0$, we let $k_n(X) = \dim_{K(n)_*}K(n)^*(X)$.

\begin{thm} \label{collapse thm} If $m \leq 2^{n+1}$, then $k_n(\Gr_d(\R^m)) = \binom{m}{d}$.  Thus, in this range, the AHSS converging to $K(n)^*(\Gr_d(\R^m))$ collapses at $E_2$.
\end{thm}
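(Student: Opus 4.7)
My plan is to sandwich $k_n(\Gr_d(\R^m))$ between matching upper and lower bounds, both equal to $\binom{m}{d}$.

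The upper bound comes immediately from the AHSS: by statement (1), the $E_2$-page $H^*(\Gr_d(\R^m); \Z/2) \otimes K(n)_*$ is free of rank $\binom{m}{d}$ over $K(n)_*$, so since $E_\infty$ is a subquotient of $E_2$ we have $k_n(\Gr_d(\R^m)) \leq \binom{m}{d}$ for all $n, d, m$. Matching this bound from below will simultaneously force $E_\infty = E_2$, giving collapse of the AHSS at $E_2$.

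For the lower bound I will induct on $n$. The base case $n = 0$ (i.e., $m \leq 2$) is handled by direct inspection: the only nontrivial space in that range is $\Gr_1(\R^2) = \RP^1$, with $k_0 = 2 = \binom{2}{1}$. For the inductive step, given $m \leq 2^{n+1}$, I will write $m = a + b$ with $a, b \leq 2^n$ (take $a = \min(m, 2^n)$) and equip $\R^m = \R^a \oplus \R^b$ with the $C_4$-action factoring through the quotient $C_4 \twoheadrightarrow C_4/\langle g^2 \rangle = C_2$, where $C_2$ acts as $+1$ on $\R^a$ and $-1$ on $\R^b$. Since $g^2$ acts trivially, the $C_4$-fixed locus coincides with the $C_2$-fixed locus, and an eigenspace decomposition identifies it as
$$\Gr_d(\R^m)^{C_4} = \bigsqcup_{d_a + d_b = d} \Gr_{d_a}(\R^a) \times \Gr_{d_b}(\R^b).$$

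Because $K(n-1)$ is a graded field, Künneth gives $k_{n-1}(X \times Y) = k_{n-1}(X) \cdot k_{n-1}(Y)$, and the inductive hypothesis (applicable since $a, b \leq 2^n$) then yields
$$k_{n-1}(\Gr_d(\R^m)^{C_4}) = \sum_{d_a + d_b = d} \binom{a}{d_a} \binom{b}{d_b} = \binom{m}{d}$$
by Vandermonde. The chromatic fixed point inequality of the authors' previous paper, applied to this $C_4$-action, will then produce $k_n(\Gr_d(\R^m)) \geq k_{n-1}(\Gr_d(\R^m)^{C_4}) = \binom{m}{d}$, closing the sandwich.

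The main obstacle I anticipate is verifying the hypotheses of the previous paper's chromatic fixed point theorem for this specific action. Since our $C_4$-action factors through $C_2$, one must confirm that the inequality still applies in this degenerate setting; if instead the previous paper demands a faithful $C_4$-action, then $\R^m$ would have to include a complex summand $\C^c$, and the corresponding fixed-point count becomes the coefficient of $x^d$ in $(1+x)^{a+b}(1+x^2)^c$, which is strictly smaller than $\binom{m}{d}$ over $\Z$ (though equal modulo $2$, since $(1+x)^2 \equiv 1+x^2$), so a more refined argument would be required to match the upper bound.
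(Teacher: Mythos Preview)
Your proof is correct and follows essentially the same route as the paper: the upper bound from the AHSS, the lower bound by induction on $n$ via the chromatic fixed point inequality applied to a splitting $\R^m = \R^a \oplus \R^b$ with $a,b \leq 2^n$, and Vandermonde to finish. The paper simply takes $C = C_2$ rather than a $C_4$-action factoring through $C_2$; since $C_2$ is already a cyclic $2$-group, \thmref{chromatic fixed point thm} applies directly with no faithfulness hypothesis, so the obstacle you anticipate never arises.
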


We note that this collapsing range is surprisingly large, as dimension considerations just imply collapsing if $d(m-d) < 2^{n+1}$.

For larger $m$, we have the following lower bound.

\begin{thm} \label{lower bound thm} Let $m = 2^{n+1} - \epsilon + 2l$ with $\epsilon =$ 0 or 1, and $l \geq 0$.  Then
$$ k_n(\Gr_d(\R^m)) \geq \sum_{i=0}^{\left\lfloor d/2 \right\rfloor} \binom{2^{n+1}-\epsilon}{d-2i}\binom{l}{i}.$$
\end{thm}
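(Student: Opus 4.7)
The plan is to introduce a $C_4$-action on $\Gr_d(\R^m)$ whose fixed locus decomposes as a disjoint union of products of a real Grassmanian (in the range covered by \thmref{collapse thm}) and a complex Grassmanian, and then apply the chromatic fixed point theorem from the authors' previous joint paper to convert dimensional information about $\Gr_d(\R^m)^{C_4}$ into a lower bound on $k_n(\Gr_d(\R^m))$.

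First, decompose $\R^m = \R^{2^{n+1}-\epsilon} \oplus \R^{2l}$ and identify the second summand with $\C^l$. Let $C_4 = \langle i \rangle \subset \C^{\times}$ act trivially on $\R^{2^{n+1}-\epsilon}$ and by complex multiplication on $\C^l$; this linear action makes $\Gr_d(\R^m)$ into a finite $C_4$-CW complex. A short linear-algebra argument shows that a $d$-plane $V \subset \R^m$ is $C_4$-fixed iff it splits as $V = U \oplus W$ with $U \subset \R^{2^{n+1}-\epsilon}$ a real $(d-2i)$-plane and $W \subset \C^l$ a complex $i$-plane for some $0 \leq i \leq \lfloor d/2 \rfloor$. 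Hence
$$\Gr_d(\R^m)^{C_4} = \bigsqcup_{i=0}^{\lfloor d/2 \rfloor} \Gr_{d-2i}(\R^{2^{n+1}-\epsilon}) \times \Gr_i(\C^l).$$

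Next, invoke the chromatic fixed point theorem to obtain a lower bound for $k_n(\Gr_d(\R^m))$ in terms of the $K(n{-}1)$-dimension of the fixed locus. Compute the latter via Künneth: each complex Grassmanian $\Gr_i(\C^l)$ is even-celled, so contributes $\binom{l}{i}$ at every Morava height; each real factor $\Gr_{d-2i}(\R^{2^{n+1}-\epsilon})$ has ambient dimension $\leq 2^{n+1}$ and so is controlled by \thmref{collapse thm}, contributing $\binom{2^{n+1}-\epsilon}{d-2i}$. Summing over $i$ yields the stated bound.

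The central obstacle is matching chromatic heights. The fixed point theorem shifts the height by one when passing to $C_4$-fixed points, but \thmref{collapse thm} is stated at height $n$ for ambient dimension $\leq 2^{n+1}$ and does not obviously give the same count at the shifted height $n{-}1$, where the relevant bound would be $2^n$ rather than $2^{n+1}$. Reconciling this mismatch — either by strengthening \thmref{collapse thm} to give the needed cell counts at the shifted height (plausibly by bootstrapping the same $C_4$-fixed-point construction inductively on $n$), or by appealing to a refined form of the chromatic fixed point theorem adapted to the Schubert cell structure so that only the height-$n$ collapsing input is required — is the technical heart of the proof.
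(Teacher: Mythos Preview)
You have correctly identified the obstacle, but the fix is much simpler than you suggest: you chose the wrong $C_4$-representation. With $C_4$ acting trivially on all of $\R^{2^{n+1}-\epsilon}$, the real Grassmannian factor in the fixed locus has ambient dimension $2^{n+1}-\epsilon$, which is too large for \thmref{collapse thm} at the shifted height $n-1$. The remedy, and what the paper does, is to remember that $C_4$ has \emph{two} one-dimensional real irreducibles $L_1, L_2$ (the trivial and the sign representation), and to take
\[
V = L_1^{2^n} \oplus L_2^{2^n-\epsilon} \oplus R^l,
\]
where $R$ is the two-dimensional irreducible. Then
\[
\Gr_d(V)^{C_4} = \bigsqcup_{j+k+2i=d} \Gr_j(\R^{2^n}) \times \Gr_k(\R^{2^n-\epsilon}) \times \Gr_i(\C^l),
\]
and now each real factor has ambient dimension at most $2^n$, so \thmref{collapse thm} at height $n-1$ gives $k_{n-1}(\Gr_j(\R^{2^n})) = \binom{2^n}{j}$ and $k_{n-1}(\Gr_k(\R^{2^n-\epsilon})) = \binom{2^n-\epsilon}{k}$ on the nose. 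The Vandermonde identity $\sum_{j+k=d-2i}\binom{2^n}{j}\binom{2^n-\epsilon}{k} = \binom{2^{n+1}-\epsilon}{d-2i}$ then collapses the double sum to exactly the claimed bound.

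No strengthening of \thmref{collapse thm} or refinement of the chromatic fixed point theorem is needed; the point is simply that the extra splitting of the trivial part (using the sign representation) cuts the ambient dimensions of the real factors in half, which is precisely what the height shift demands. This is the same trick already used in the inductive proof of \thmref{collapse thm} itself.
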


\begin{conj} \label{kn conj} Equality always holds in this last theorem.
\end{conj}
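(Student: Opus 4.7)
The plan is to establish a matching upper bound $k_n(\Gr_d(\R^m)) \leq \sum_{i=0}^{\lfloor d/2 \rfloor} \binom{2^{n+1}-\epsilon}{d-2i}\binom{l}{i}$ via the Atiyah--Hirzebruch spectral sequence (AHSS), which combined with \thmref{lower bound thm} yields the conjectural equality and, as a bonus, forces collapse of the AHSS at $E_{2^{n+1}}$.  In the AHSS $E_2^{*,*} = H^*(\Gr_d(\R^m); \Z/2) \otimes K(n)_*$ converging to $K(n)^*(\Gr_d(\R^m))$, the first potentially nonzero differential is $d_{2^{n+1}-1}$, which (tensored up to $K(n)_*$) is Milnor's primitive $Q_n$.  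Hence
$$k_n(\Gr_d(\R^m)) \;\leq\; \dim_{\Z/2} E_{2^{n+1}} \;=\; \dim H(H^*(\Gr_d(\R^m); \Z/2), Q_n),$$
and the entire problem reduces to a purely algebraic $Q_n$-homology computation.

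First I would fix the standard presentation $H^*(\Gr_d(\R^m); \Z/2) = \Z/2[w_1, \ldots, w_d]/I_{m,d}$, where $I_{m,d}$ is generated by the dual Stiefel--Whitney classes $\bar w_{m-d+1}, \ldots, \bar w_m$, and record the $Q_n$-action on each $w_k$ using the Wu formula together with the inductive description $Q_n = [Q_{n-1}, Sq^{2^n}]$.  Because the $\binom{l}{i}$ factor in the target formula strongly suggests induction on $l$, I would compare $\Gr_d(\R^{2^{n+1}-\epsilon+2l})$ with $\Gr_d(\R^{2^{n+1}-\epsilon+2(l-1)})$ and try to show that, as a complex with differential $Q_n$, the cohomology of the larger Grassmanian is built from that of the smaller one by adjoining a free $Q_n$-module of the predicted rank.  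The base case $l = 0$ is precisely \thmref{collapse thm}: the AHSS collapses at $E_2$, so $Q_n$ acts trivially and $k_n = \binom{2^{n+1}-\epsilon}{d}$, matching the $i = 0$ summand.

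The main obstacle will be this inductive step: verifying that the filtration quotient has $Q_n$-homology of exactly the right dimension $\sum_i \binom{2^{n+1}-\epsilon}{d-2i}\binom{l-1}{i-1}$.  The two special families already handled ($d = 2$, and $m \leq 2^{n+1}$) succeed because the Stiefel--Whitney algebra is small enough to describe $\ker Q_n$ and $\im Q_n$ by hand, but in general the Wu action interacts with the relations $\bar w_j = 0$ in subtle ways, and an honest global basis for $Q_n$-homology of the quotient ring is what is missing.  A complementary tactic, possibly required simultaneously, is to iterate the chromatic fixed point argument underlying \thmref{lower bound thm}: computing $K(n-j)^*(\Gr_d(\R^m)^{(C_4)^j})$ for $j > 1$ should each time improve the lower bound, and if one can push the lower bound all the way up to the $Q_n$-homology count, the conjecture follows without ever producing an explicit basis.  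In either approach the combinatorial heart of the argument is an identity of binomial-coefficient sums, and a bijective or generating-function proof of that identity is probably the cleanest route.
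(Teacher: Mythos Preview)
This statement is a \emph{conjecture} in the paper and is not proved there in general; the paper establishes it only for $m \leq 2^{n+1}$ (\thmref{collapse thm}), for $d \leq 2$ (\thmref{d=2 thm}), and for $n=0$ (classical). Your proposal is, appropriately, a plan rather than a proof, and you correctly flag the unresolved step as the $Q_n$-homology computation. Your reduction to $Q_n$-homology is exactly the paper's strategy: this is their \conjref{collapsing conj}, which implies \conjref{kn conj} via \lemref{AHSS & ASS lem}. The paper's inductive scheme differs from yours, however: they step from $m-1$ to $m$ rather than from $m-2$ to $m$, using the cofiber $C_d(\R^m)$ of $\Gr_d(\R^{m-1}) \hookrightarrow \Gr_d(\R^m)$, which they identify as the Thom space of $\gamma_{d-1}^{\perp}$ over $\Gr_{d-1}(\R^{m-1})$. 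This gives a recursion in both $m$ and $d$; the even-$m$ step is handled by a duality argument (\thmref{even m properties thm}), while the odd-$m$ step is governed by a connecting map $\delta$ in $Q_n$-homology that is understood only for $d=2$.

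Your second tactic---iterating the fixed-point argument with $(C_4)^j$---does not work as stated. First, $(C_4)^j$ is not cyclic for $j>1$, so \thmref{chromatic fixed point thm} does not apply to it directly. More to the point, the lower bound of \thmref{lower bound thm} already \emph{equals} the conjectured value of $k_n(\Gr_d(\R^m))$; there is nothing left to ``push up.'' What is missing is a matching \emph{upper} bound, and that is precisely the $Q_n$-homology count. Fixed-point methods produce lower bounds only, so no refinement of that side can close the gap; the obstruction lives entirely in the algebraic $Q_n$-homology computation you identified.
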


The biggest novelty of this paper is our method for proving Theorems \ref{collapse thm} and \ref{lower bound thm}: we make use of chromatic fixed point theory to prove these nonequivariant results.

The blue shift theorem of \cite{6 author} says that if $C$ is a finite cyclic $p$--group and $X$ is a finite $C$-CW complex, then
$$ \widetilde K(n)^*(X) = 0 \Rightarrow \widetilde K(n-1)^*(X^C) = 0.$$
In \cite{kuhn lloyd 2020}, we upgraded this as follows.

\begin{thm} \cite[Theorem 2.16]{kuhn lloyd 2020} \label{chromatic fixed point thm} If $C$ is a finite cyclic $p$--group, and $X$ is a finite $C$-CW complex, then
$$ k_n(X) \geq k_{n-1}(X^C).$$
\end{thm}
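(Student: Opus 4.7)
The plan is to upgrade the qualitative blue shift vanishing theorem of \cite{6 author} into a quantitative dimension inequality. A preliminary reduction lets us assume $|C|=p$: if $C=C_{p^k}$ and $C_p\subset C$ is the unique order-$p$ subgroup, then $X^C=(X^{C_p})^{C/C_p}$, so iterating the prime-order case would already give $k_n(X)\geq k_{n-k}(X^C)$, which implies (and in fact strengthens) the stated bound. So it suffices to treat $C=C_p$.

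For $C=C_p$, I would construct a natural map $K(n)^*(X)\to K(n-1)^*(X^C)$ and prove its surjectivity. The natural home for such a map is an equivariant refinement of Morava $K$-theory: a genuine $C$-spectrum $\underline{K(n)}$ whose geometric fixed points $\Phi^C\underline{K(n)}$ recover $K(n-1)$, providing a spectrum-level analogue of the blue shift. The isotropy cofiber sequence $(X^C)_+\to X_+\to X_+/(X^C)_+$ then organizes the problem: the third term has only free $C$-cells away from the basepoint and so collapses under $\Phi^C$, making the target of the map depend only on the fixed locus. Combining this with an identification of $\underline{K(n)}^*(X)$ with $K(n)^*(X)$ as graded vector spaces would then produce the inequality.

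The main obstacle is twofold. First, one needs an equivariant lift of $K(n)$ with the correct geometric fixed points and with $\underline{K(n)}^*(X)$ dimension-controlled by $K(n)^*(X)$; the existence and structural properties of such an equivariant Morava $K$-theory are subtle, especially at the prime $p=2$ that is the focus of the paper. Second, one must establish surjectivity of the resulting map onto $K(n-1)^*(X^C)$, which is the hard part. This is where the vanishing blue shift of \cite{6 author} becomes essential: applied to the various cofibers arising in the $C$-CW filtration of $X$ relative to $X^C$, it rules out obstructions coming from free cells. Assembling this cell-by-cell vanishing into a global surjectivity statement, most likely through a careful analysis of the $C$-equivariant Atiyah--Hirzebruch spectral sequence or the Mackey functor structure on $\underline{K(n)}^*$, is the technical heart of the argument.
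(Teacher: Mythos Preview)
This theorem is not proved in the present paper: it is quoted as \cite[Theorem 2.16]{kuhn lloyd 2020} and used as a black box to deduce Theorems \ref{collapse thm} and \ref{lower bound thm}. There is therefore no proof here to compare your proposal against.

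That said, your reduction step contains a genuine error. Iterating the $C_p$ case through the tower $C_p \subset C_{p^2} \subset \cdots \subset C_{p^k}=C$ yields
\[
k_n(X) \geq k_{n-1}(X^{C_p}) \geq k_{n-2}(X^{C_{p^2}}) \geq \cdots \geq k_{n-k}(X^C),
\]
not $k_n(X) \geq k_{n-1}(X^C)$. The quantities $k_{n-1}(X^C)$ and $k_{n-k}(X^C)$ are in general incomparable, so the iterated inequality neither implies nor strengthens the stated one. The theorem for a general cyclic $p$-group $C$ really does assert a drop of exactly one chromatic height regardless of $|C|$, and this cannot be obtained by bootstrapping from the order-$p$ case in the way you describe.

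Your second paragraph sketches a plausible strategy, but the key ingredient you posit---a genuine $C$-equivariant lift $\underline{K(n)}$ with $\Phi^C\underline{K(n)} \simeq K(n-1)$ and with $\underline{K(n)}^*(X)$ of the same $K(n)_*$-dimension as $K(n)^*(X)$---is not something one can simply assume; establishing such a structure (or a suitable surrogate via Tate constructions and the cofiber sequence $EC_+ \to S^0 \to \widetilde{EC}$) is essentially the content of the cited paper. If you want to pursue this, consult \cite{kuhn lloyd 2020} directly for the actual argument.
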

(In these statements, $K(n)_*$ means Morava $K$-theory at the prime $p$.) \\

As $\binom{m}{d}$ is an evident upper bound for $k_n(\Gr_d(\R^m))$, to prove \thmref{collapse thm}, it suffices to show that $k_n(\Gr_d(\R^m)) \geq \binom{m}{d}$ in the stated range. Using \thmref{chromatic fixed point thm}, we will show this by induction on $n$ using a $C_2$--action on $\Gr_d(\R^m)$ induced by an $m$--dimensional real representation of $C=C_2$.

We will similarly prove \thmref{lower bound thm} for $n \geq 1$ by using a $C_4$--action on $\Gr_d(\R^m)$ induced by an $m$--dimensional real representation of $C=C_4$.

In both cases, it will be quite easy to compute $k_{n-1}(\Gr_d(\R^m)^C)$.

Details of this will be in \secref{chromatic section}.

\subsection{Results about the $Q_n$--homology of the Grassmanians}
\mbox{}
\vspace{.1in}

Conjecture \ref{kn conj} follows from a conjectural calculation that only involves $H^*(\Gr_d(\R^m); \Z/2)$, viewed as a module over the Steenrod algebra.

Let $Q_n$, $n = 0, 1, 2, \dots$, be the Milnor primitives: the elements in the mod 2 Steenrod algebra recursively defined by $Q_0 = Sq^1$, and $Q_n = [Q_{n-1}, Sq^{2^n}]$.  These satisfy $Q_n^2=0$, and we let $k_{Q_n}(X)$ denote the total dimension of the {\em $Q_n$--homology of $X$}:
$$ H^*(X;Q_n) = \frac{Z^*(X;Q_n)}{B^*(X;Q_n)},$$
where $Z^*(X;Q_n) = \ker \{Q_n: H^*(X;\Z/2) \ra H^{*+2^{n+1}-1}(X;\Z/2)\}$ and
$B^*(X;Q_n) = \im \{Q_n: H^{*-2^{n+1}+1}(X;\Z/2) \ra H^*(X;\Z/2)\}$.

As will be reviewed in \secref{AHSS = ASS subsection}, the first differential in the AHSS converging to $K(n)^*(X)$ is $ d_{2^{n+1}-1}$, with formula
$$ d_{2^{n+1}-1}(x) = Q_n(x)v_n,$$
for all $x \in E_2^{*,0}(X)= H^*(X;\Z/2)$.  This makes it not hard to check the next lemma.

\begin{lem} \label{AHSS & ASS lem} If $X$ is a finite complex, $k_{Q_n}(X) \geq k_n(X)$ is always true, and the following are equivalent:

 (a) $k_{Q_n}(X) = k_n(X)$.

 (b) The AHSS, when $n\geq 1$, or the BSS, when $n=0$, computing $K(n)^*(X)$ collapses at $E_{2^{n+1}}$.

 (c) The ASS computing $k(n)^*(X)$ collapses at $E_2$.
\end{lem}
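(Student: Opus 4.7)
The plan is to identify the size of the first nontrivial page of each of the three spectral sequences with $k_{Q_n}(X)$, so that the conditions $(a)$, $(b)$, $(c)$ become three parallel statements that the convergent module's dimension matches this page size.

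For $(a)\Leftrightarrow(b)$, I would compute the $E_{2^{n+1}}$ page of the AHSS converging to $K(n)^*(X)$ (respectively the $E_2$ page of the BSS when $n=0$). Starting from $E_2 = H^*(X;\Z/2)\otimes_{\Z/2} K(n)_*$, the stated formula $d_{2^{n+1}-1}(x) = Q_n(x)\,v_n$ on $x \in E_2^{*,0}$, combined with the Leibniz rule and $K(n)_*$-linearity, identifies the first possibly nonzero differential as $Q_n\otimes v_n$. Taking homology (using $Q_n^2=0$) yields
\[ E_{2^{n+1}} \;\cong\; H^*(X;Q_n)\otimes_{\Z/2} K(n)_*, \]
of $K(n)_*$-dimension $k_{Q_n}(X)$. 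Since later pages are subquotients and the sequence converges to a $K(n)_*$-module of dimension $k_n(X)$, we obtain both the inequality $k_{Q_n}(X)\geq k_n(X)$ and the equivalence $(a)\Leftrightarrow(b)$.

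For $(a)\Leftrightarrow(c)$, I would invoke the standard identification $H^*(k(n);\Z/2) \cong A\!/\!/E(Q_n)$, where $E(Q_n)$ is the exterior subalgebra of the Steenrod algebra on the Milnor primitive. A change-of-rings argument presents the mod-$2$ ASS $E_2$-page as
\[ E_2^{\mathrm{ASS}} \;=\; \Ext_{E(Q_n)}^{*,*}\!\bigl(H^*(X;\Z/2),\Z/2\bigr) \;\cong\; H^*(X;Q_n)\otimes_{\Z/2} k(n)_*, \]
a \emph{free} $k(n)_*$-module of rank $k_{Q_n}(X)$. Since $K(n) = v_n^{-1}k(n)$ and $v_n$-inversion is exact, $k_n(X)$ equals the $v_n$-torsion-free $k(n)_*$-rank of $k(n)^*(X)$, which by the convergence theorem equals the free rank of $E_\infty^{\mathrm{ASS}}$. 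The latter is at most the rank of $E_2^{\mathrm{ASS}}$, with equality iff no ASS differentials occur. Hence $k_n(X) = k_{Q_n}(X)$ iff the ASS collapses at $E_2$, giving $(a)\Leftrightarrow(c)$.

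The main obstacle I anticipate is the bookkeeping in this second step: one must verify that any nontrivial ASS differential strictly reduces the $v_n$-torsion-free rank (either by killing a generator or by producing $v_n$-torsion in $E_\infty$) and that convergence is compatible with $v_n$-inversion. Both facts follow from the explicit freeness of $E_2^{\mathrm{ASS}}$ over $k(n)_* = \Z/2[v_n]$ and the finiteness of $X$, but they should be recorded carefully before invoking the rank count.
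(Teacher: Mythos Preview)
Your argument for $(a)\Leftrightarrow(b)$ matches the paper's exactly. For $(a)\Leftrightarrow(c)$ your route is genuinely different from the paper's, and it contains one inaccuracy that needs repair.

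The claim that $E_2^{\mathrm{ASS}} \cong H^*(X;Q_n)\otimes k(n)_*$ is a \emph{free} $k(n)_*$-module is not correct. Computing $\Ext_{E(Q_n)}^{s,*}(H^*(X),\Z/2)$ from the periodic resolution of $\Z/2$ gives $(\operatorname{coker} Q_n)^*$ in filtration $s=0$ and $H^*(X;Q_n)^*$ for $s\geq 1$; the map ``multiplication by $v_n$'' from $s=0$ to $s=1$ is the surjection dual to $Z^*(X;Q_n)\twoheadrightarrow H^*(X;Q_n)$, so there is $v_n$-torsion of dimension $\dim(\operatorname{im} Q_n)$ concentrated in filtration $0$. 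Your rank-counting paragraph leans on freeness, so as written it does not close. The fix is easy: since the $v_n$-torsion sits entirely in filtration $0$ and differentials raise filtration, $v_n$-linearity forces any differential out of the torsion to land in torsion in positive filtration, hence to vanish; thus a nontrivial $d_r$ must involve the free part and strictly drops the $v_n$-free rank. Equivalently, localize the whole spectral sequence at $v_n$ and observe that $v_n^{-1}E_2$ really is $H^*(X;Q_n)\otimes K(n)_*$.

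The paper takes a different path to $(b)\Leftrightarrow(c)$: rather than computing the ASS $E_2$ by change of rings, it identifies the ASS for $k(n)^*(X)$ with the $k(n)$-AHSS (Maunder's theorem, using that the Postnikov tower of $k(n)$ is an Adams tower via the cofibration $\Sigma^{2^{n+1}-2}k(n)\xrightarrow{v_n}k(n)\to H\Z/2$), and then compares the $k(n)$-AHSS with the $K(n)$-AHSS directly, arguing that any higher differential in the former is detected in the latter because the two agree below the $0$-line. This sidesteps the torsion bookkeeping entirely. Your change-of-rings approach, once patched, is a perfectly good alternative and is arguably more self-contained since it does not invoke Maunder's identification; the paper's approach, on the other hand, makes the relationship between the three spectral sequences more transparent.
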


We apply this to our situation.  Firstly, \thmref{collapse thm} has the following nontrivial algebraic consequence.

\begin{cor}  If $m \leq 2^{n+1}$, then $Q_n$ acts trivially on $H^*(\Gr_d(\R^m);\Z/2)$.
\end{cor}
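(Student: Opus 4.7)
The plan is to deduce the corollary formally from \thmref{collapse thm} and \lemref{AHSS & ASS lem}, together with the elementary fact that the mod 2 cohomology of $\Gr_d(\R^m)$ has total dimension exactly $\binom{m}{d}$ (statement (1) at the start of the introduction).

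First I would set $X = \Gr_d(\R^m)$ and observe the chain of inequalities
$$ \binom{m}{d} = \dim_{\F_2} H^*(X;\Z/2) \geq k_{Q_n}(X) \geq k_n(X). $$
The right-hand inequality is \lemref{AHSS & ASS lem}, while the left-hand inequality holds for any finite complex $Y$ with a self-map $Q$ of square zero: the rank--nullity identity applied to $Q_n$ on $H^*(X;\Z/2)$ gives
$$ \dim_{\F_2} H^*(X;\Z/2) = k_{Q_n}(X) + 2\dim_{\F_2} \im(Q_n), $$
so the dimension of $Q_n$-homology is bounded above by the dimension of $H^*(X;\Z/2)$, with equality exactly when $Q_n$ acts as zero.

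Next I would invoke \thmref{collapse thm}: under the hypothesis $m \leq 2^{n+1}$ we have $k_n(X) = \binom{m}{d}$. Combined with the display above, this forces $k_{Q_n}(X) = \binom{m}{d} = \dim_{\F_2} H^*(X;\Z/2)$, and by the identity recalled in the previous paragraph this is equivalent to $\im(Q_n) = 0$, i.e.\ to $Q_n$ acting trivially on $H^*(\Gr_d(\R^m);\Z/2)$.

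There is no real obstacle here; the whole content of the corollary is packaged into \thmref{collapse thm}, whose proof via chromatic fixed point theory is the actual substance. The only care needed is in recording the elementary equality $\dim_{\F_2} H^*(X;\Z/2) = k_{Q_n}(X) + 2\dim_{\F_2}\im(Q_n)$ used to convert a numerical coincidence into the vanishing statement.
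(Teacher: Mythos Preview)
Your proposal is correct and is exactly the argument the paper has in mind: the corollary is stated immediately after \lemref{AHSS & ASS lem} as a consequence of \thmref{collapse thm}, with no further proof given beyond the remark that an independent algebraic proof exists in \cite{lloyd thesis}. Your write-up simply makes explicit the chain $\binom{m}{d}=\dim_{\F_2}H^*(X;\Z/2)\geq k_{Q_n}(X)\geq k_n(X)=\binom{m}{d}$ and the rank--nullity identity that converts $k_{Q_n}(X)=\dim_{\F_2}H^*(X;\Z/2)$ into $\im(Q_n)=0$.
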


(For an algebraic proof of this result using the methods of \secref{lenart formula sec}, see the second author's thesis \cite[p.75]{lloyd thesis}.)

For $m>2^{n+1}$, we believe the following is true.

\begin{conj} \label{collapsing conj} Let $m = 2^{n+1} - \epsilon + 2l$ with $\epsilon =$ 0 or 1, and $l \geq 0$.  Then
$$ k_{Q_n}(Gr_d(\R^m)) = \sum_{i=0}^{\left\lfloor d/2 \right\rfloor} \binom{2^{n+1}-\epsilon}{d-2i}\binom{l}{i}.$$
\end{conj}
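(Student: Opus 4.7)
The plan is to compute $k_{Q_n}(\Gr_d(\R^m))$ by a direct combinatorial analysis of the chain complex $(H^*(\Gr_d(\R^m);\Z/2), Q_n)$ on the Schubert basis $\{\sigma_\lambda\}$ indexed by partitions $\lambda \subseteq d\times(m-d)$. The essential input is the Lenart-style formula to be developed in \secref{lenart formula sec}, which should write $Q_n(\sigma_\lambda)$ as an explicit $\F_2$-linear combination of Schubert classes $\sigma_\mu$ with $\mu/\lambda$ a border strip of length $2^{n+1}-1$. Note that when $l=0$ the shape constraint $\mu\subseteq d\times(m-d)$ forces $Q_n\equiv 0$, recovering the corollary above; the genuine content of the conjecture lies in the regime $l\geq 1$.

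Our first approach would be to build an algebraic discrete Morse matching on the set of partitions in $d\times(m-d)$. We fix a linear order on cells of the ambient rectangle and pair $\lambda$ with $\iota(\lambda)$ by adding or removing the admissible length-$(2^{n+1}-1)$ border strip of minimal head position. Matched pairs cancel in $Q_n$-homology, so the fixed points of $\iota$ form a basis of $H^*(\Gr_d(\R^m);Q_n)$. The task is then to characterise these fixed points: the shape of the conjectured answer $\sum_i \binom{2^{n+1}-\epsilon}{d-2i}\binom{l}{i}$ suggests that an unmatched $\lambda$ decomposes into $d-2i$ ``short'' parts drawn from a set of size $2^{n+1}-\epsilon$ together with $i$ pairs of ``long'' parts encoding an $i$-subset of $\{1,\dots,l\}$. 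The $2^{n+1}$-core/quotient construction for partitions is a natural tool for making this bijection explicit.

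A parallel, and perhaps more tractable, route is induction on $l$. The Schubert stratification yields a short exact sequence of $\Z/2$-vector spaces
$$0\to H^*(\Gr_d(\R^{m-1});\Z/2)\to H^*(\Gr_d(\R^m);\Z/2)\to H^{*-(m-d)}(\Gr_{d-1}(\R^{m-1});\Z/2)\to 0,$$
and Lenart's formula should let us track $Q_n$ across all three terms, producing a long exact sequence in $Q_n$-homology. Stepping through $m\mapsto m+2$ so as to stay within a fixed value of $\epsilon$, the recursion so produced should match the Pascal-type identity $\binom{2^{n+1}-\epsilon}{d-2i}\binom{l+1}{i}-\binom{2^{n+1}-\epsilon}{d-2i}\binom{l}{i}=\binom{2^{n+1}-\epsilon}{d-2i}\binom{l}{i-1}$. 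The known base cases ($d=2$ and $m\leq 2^{n+1}$) would anchor the induction.

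The principal obstacle in either route is controlling the error terms in Lenart's formula: $Q_n(\sigma_\lambda)$ can contain several $\sigma_\mu$ simultaneously, so the Morse matching must be shown to be acyclic and the connecting homomorphism in the long exact sequence need not have the naive rank. Resolving this will likely require organising the partitions by their $2^{n+1}$-core/quotient pair and proving that $Q_n$ is essentially ``diagonal'' with respect to this decomposition---a structural statement that would reduce the proof to a clean enumeration at the level of $2^{n+1}$-quotients.
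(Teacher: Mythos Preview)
This statement is a \emph{conjecture} in the paper, not a theorem; the paper does not prove it in general. What the paper does prove is the case $d=2$ (\thmref{d=2 thm}) and the inductive step for even $m$ (\thmref{even m properties thm}(c)), together with extensive computer verification for larger $d$. So there is no ``paper's own proof'' to compare against, and your proposal should be read as a research plan rather than a proof.

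That said, your second route---induction via the cofibration sequence---is exactly the framework the paper uses for its partial results, and the paper is explicit about where this breaks down. Two corrections and one substantive point. First, your short exact sequence has the arrows reversed: in cohomology the inclusion $i:\Gr_d(\R^{m-1})\hookrightarrow\Gr_d(\R^m)$ induces a \emph{surjection} $i^*$, and the kernel is $\widetilde H^*(C_d(\R^m))$, identified via the Thom isomorphism with a shifted copy of $H^*(\Gr_{d-1}(\R^{m-1}))$. Second, the $Q_n$-action on that kernel is not the standard one on $H^*(\Gr_{d-1}(\R^{m-1}))$ but the twisted action $\widehat Q_n(x)=Q_n(x)+x\alpha_n$ of \propref{twisted prop}; this twist is invisible in your sketch but is essential. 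The substantive obstacle, which the paper isolates in \secref{speculation section}, is precisely the one you name: when $m$ is odd the connecting map $\delta$ in the $Q_n$-homology long exact sequence is nonzero with rank conjecturally given by \conjref{boundary conj}, and for $d\geq 3$ the authors do not know how to compute it. Your Pascal-type recursion stepping $m\mapsto m+2$ hides two applications of the sequence, one with $m$ odd, so you cannot avoid this.

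Your first route (a Morse matching via Lenart's formula) is genuinely different from anything the paper attempts, but as stated it cannot work: Lenart's formula \eqref{formula:Dk} sums over \emph{broken} border strips $\mu/\lambda$ with up to two connected components, and for connected strips the coefficient $d_{\lambda\mu}$ is a content sum that is often zero. So ``add or remove the admissible border strip of minimal head position'' is not a well-defined involution on the support of $Q_n$, and showing acyclicity of any refined matching would require exactly the structural control of $d_{\lambda\mu}$ that you flag as the principal obstacle. The $2^{n+1}$-core/quotient idea is suggestive, but nothing in the paper indicates that $Q_n$ diagonalises along such a decomposition.
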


Comparison with \thmref{lower bound thm} shows that when \conjref{collapsing conj} is true, one can conclude
\begin{itemize}
\item $k_{Q_n}(\Gr_d(\R^m)) = k_n(\Gr_d(\R^m))$, and \conjref{kn conj} is true.
\item The AHSS computing $K(n)^*(\Gr_d(\R^m))$ collapses at $E_{2^{n+1}}$.
\item The ASS computing $k(n)^*(\Gr_d(\R^m))$ collapses at $E_2$.
\item $k_n(\Gr_d(\R^{2^{n+1}-\epsilon + 2l}))$ is polynomial of degree $\left\lfloor d/2 \right\rfloor$ as a function of $l$.
\end{itemize}

Known rational calculations imply that the conjecture is true when $n=0$.  It is also easy to show that the conjecture is true when $d=1$, and one calculates
\begin{equation*}
k_n(\Gr_1(\R^m)) =
\begin{cases}
m & \text{if } 1 \leq m \leq 2^{n+1} \\  2^{n+1} - \epsilon &  \text{if } m = 2^{n+1}-\epsilon + 2l.
\end{cases}
\end{equation*}

With much more work we prove the following.

\begin{thm} \label{d=2 thm}  \conjref{collapsing conj} is true when $d = 2$.  Thus the Atiyah--Hirzebruch spectral sequence computing $K(n)_*(\Gr_2(\R^m))$ collapses at $E_{2^{n+1}}$, the Adams spectral sequence computing $k(n)_*(\Gr_2(\R^m))$ collapses at $E_2$, and we have the calculation
\begin{equation*}
k_n(\Gr_2(\R^m)) =
\begin{cases}
\binom{m}{2} & \text{if } 2 \leq m \leq 2^{n+1} \\  \binom{2^{n+1}-\epsilon}{2} + l & \text{if }  m = 2^{n+1}-\epsilon + 2l.
\end{cases}
\end{equation*}
\end{thm}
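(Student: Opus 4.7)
The plan is to reduce the theorem to a purely algebraic computation of $k_{Q_n}(\Gr_2(\R^m))$ via Lemma \ref{AHSS & ASS lem}. That lemma, together with Theorem \ref{lower bound thm}, already gives
\[ k_{Q_n}(\Gr_2(\R^m)) \geq k_n(\Gr_2(\R^m)) \geq \binom{2^{n+1}-\epsilon}{2}+l, \]
so it suffices to prove the matching upper bound $k_{Q_n}(\Gr_2(\R^m)) \leq \binom{2^{n+1}-\epsilon}{2}+l$ by a direct computation in mod $2$ cohomology. Once in hand, the sandwich forces equality throughout, and Lemma \ref{AHSS & ASS lem} then yields the claimed collapses of the AHSS at $E_{2^{n+1}}$ and of the ASS at $E_2$, along with the explicit formula for $k_n(\Gr_2(\R^m))$ (the case $m\leq 2^{n+1}$ is already contained in Theorem \ref{collapse thm}).

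For the upper bound, I would work from the standard presentation
\[ H^*(\Gr_2(\R^m);\Z/2) = \Z/2[w_1,w_2]/(\bar{w}_{m-1},\bar{w}_m), \]
with dual classes $\bar{w}_i$ determined recursively by $(1+w_1+w_2)(1+\bar{w}_1+\bar{w}_2+\cdots)=1$ and admitting closed-form polynomial expressions in $w_1,w_2$ with binomial-coefficient entries. Because $Q_n$ is primitive in the Steenrod algebra it is a derivation on the cohomology ring, so its action is determined by $Q_n(w_1)$ and $Q_n(w_2)$. A short induction from $Q_n=[Q_{n-1},Sq^{2^n}]$, together with the instability of $Sq^{2^n}$ on $w_1$ for $n\ge 1$, produces $Q_n(w_1)=w_1^{2^{n+1}}$, while Wu's formula gives a closed polynomial expression for $Q_n(w_2)$. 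Combined with the Leibniz rule, these pin down $Q_n$ on every monomial $w_1^a w_2^b$.

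The heart of the proof is then the combinatorial analysis of the resulting chain complex on the monomial basis $\{w_1^a w_2^b\}$ subject to the $m$-dependent relations $\bar{w}_{m-1}=\bar{w}_m=0$. I would filter by the power $b$ of $w_2$ and, within each piece, perform a triangular change of variables that realizes $Q_n$ either as zero or as an explicitly injective map onto its image, so that the dimensions of $\ker Q_n$ and $\operatorname{im} Q_n$ can be read off. The target $\binom{2^{n+1}-\epsilon}{2}$ should come from a ``base'' range of surviving monomials matching the $l=0$ case, while the extra $+l$ should arise from a single additional surviving class created each time $l$ increases by $1$ (i.e.\ each time $m$ grows by $2$).

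The main obstacle I anticipate is the bookkeeping at the end: after expanding $Q_n(w_2)$ by Wu's formula and reducing modulo $(\bar{w}_{m-1},\bar{w}_m)$, the rank of $Q_n$ on each graded piece is controlled by the parities of binomial coefficients such as $\binom{a}{2^{n+1}-1}$ and those coming from the closed form for $\bar{w}_i$. Showing that these parities arrange into a telescoping pattern whose total is exactly $\binom{2^{n+1}-\epsilon}{2}+l$, uniformly in $m$ and $n$, is where the real technical work lies; Lucas's theorem, together with careful handling of the two boundary relations, should be decisive.
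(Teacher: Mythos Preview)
Your sandwich framework is exactly right and matches the paper: reduce to showing $k_{Q_n}(\Gr_2(\R^m))\le \binom{2^{n+1}-\epsilon}{2}+l$, then invoke \thmref{lower bound thm} and \lemref{AHSS & ASS lem}. The divergence is entirely in how that upper bound is obtained.

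The paper does \emph{not} attempt a direct analysis of $Q_n$ on the monomial basis of $\Z/2[w_1,w_2]/(\bar w_{m-1},\bar w_m)$. Instead it runs an induction on $m$ through the cofiber sequence $\Gr_2(\R^{m-1})\to\Gr_2(\R^m)\to C_2(\R^m)$. Since $C_2(\R^m)$ is the Thom space of a bundle over $\RP^{m-2}$, its cohomology is the free rank-one $H^*(\RP^{m-2})$--module on $\bar w_{m-2}$, on which $Q_n$ acts by the easily computed rule $Q_n(w_1^i\bar w_{m-2})=w_1^{2^{n+1}-1+i}\bar w_{m-2}$ for $i$ even and $0$ for $i$ odd; this immediately gives $\bar k_{Q_n}(C_2(\R^m))$. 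For even $m$ the $Q_n$--homology long exact sequence stays short exact by \thmref{even m properties thm}. For odd $m>2^{n+1}$ the paper proves (\thmref{d=2 odd m boundary thm}) that $p^*$ is zero on $Q_n$--homology by exhibiting two explicit $Q_n$--preimages in $H^*(\Gr_2(\R^m))$: $Q_n(w_1\bar w_{2l})=w_1\bar w_{2^{n+1}-1+2l}$ and $Q_n(w_2^{2l+1})=w_1^{2l+2}\bar w_{2^{n+1}-1+2l}$. The inductive step then reads off $k_{Q_n}(\Gr_2(\R^m))=k_{Q_n}(\Gr_2(\R^{m-1}))-(2^{n+1}-2)$.

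Your proposed route has a concrete obstruction at the ``filter by the power of $w_2$'' step. From the paper's \lemref{Qn w2 lem}, $Q_n(w_2)=\sum_{c=0}^{n} w_1^{2^{n+1}-2^{c+1}+1}w_2^{2^c}$, so $Q_n$ strictly \emph{raises} the $w_2$--exponent (up to $2^n$); it does not respect the $w_2$--filtration, and no evident triangularity in that grading is available. The part you flag as ``the real technical work'' is precisely where the paper's approach pays off: rather than untangle Lucas--type parities across the whole ring, it isolates the computation on the rank-one module $\widetilde H^*(C_2(\R^m))$ and reduces the remaining content to finding two explicit $Q_n$--primitives. If you want to push through a direct argument, the identities $Q_n(w_1\bar w_{2l})$ and $Q_n(w_2^{2l+1})$ above are the key nontrivial inputs you would end up needing anyway.
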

\bigskip

We are firm believers in our conjectures.  For more evidence, the second author has made extensive computer calculations verifying Conjecture \ref{collapsing conj} in hundreds more cases with larger values of $d$: see the tables in the Appendix. \\

For $d\geq 2$, computing the size of $H^*(\Gr_d(\R^m);Q_n)$ seems tricky. We have organized our efforts by studying how these numbers change as $m$ is increased as follows.

Let $C_d(\R^{m})$ denote the cofiber of the inclusion $\Gr_d(\R^{m-1}) \ra \Gr_d(\R^{m})$, so there is a cofiber sequence $\Gr_d(\R^{m-1}) \xra{i} \Gr_d(\R^{m}) \xra{p} C_d(\R^m)$.  In \secref{cofib subsection}, $C_d(\R^{m})$ is identified as the Thom space of the canonical normal bundle over $\Gr_{d-1}(\R^{m-1})$, and in \secref{twisted subsection}, we study the $Q_n$--module $\widetilde H^*(C_d(\R^{m});\Z/2)$, viewed as $H^*(\Gr_{d-1}(\R^{m-1}); \Z/2)$ equipped with an explicit twisted $Q_n$--action.  One has an induced short exact sequence of modules over the Steenrod algebra
$$0 \ra \widetilde H^*(C_d(\R^{m}); \Z/2) \xra{p^*} H^*(\Gr_d(\R^m); \Z/2) \xra{i^*} H^*(\Gr_d(\R^{m-1}); \Z/2) \ra 0,$$
inducing a long exact sequence on $Q_n$--homology. \\

When $m$ is even, we see much orderly behavior.

\begin{thm} \label{even m properties thm} Let $m$ be even.

(a) $H^{d(m-d)}(\Gr_d(\R^m);Q_n) \simeq \Z/2$, i.e. the nonzero top dimensional cohomology class is not in the image of $Q_n$ for all $n$.

(b) The chain complex $(\widetilde H^*(C_d(\R^{m});Q_n))$ is dual to the chain complex $(H^{d(m-d)-*}(Gr_{d-1}(\R^{m-1});Q_n))$.

(c) If \conjref{collapsing conj} is true for $(n,d,m-1)$ and $(n,d-1,m-1)$, then it is true for $(n,d,m)$.  Furthermore, $\Gr_d(\R^m)$ will then be $k(n)$--oriented, and the cofiber sequence above will induce short exact sequences
$$ 0 \ra \widetilde H^*(C_d(\R^m); Q_n) \xra{p^*} H^*(\Gr_d(\R^m); Q_n) \xra{i^*} H^*(Gr_d(\R^{m-1}); Q_n) \ra 0, $$
$$ 0 \ra \widetilde K(n)^*(C_d(\R^m)) \xra{p^*} K(n)^*(\Gr_d(\R^m)) \xra{i^*} K(n)^*(\Gr_d(\R^{m-1})) \ra 0. $$
\end{thm}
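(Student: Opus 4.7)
My plan is to prove (a) first, deduce (b) from (a) via Poincar\'e--Lefschetz duality on the orientable manifold $\Gr_d(\R^m)$, and finally establish (c) as a dimension squeeze combining (a), (b), the two inductive hypotheses, and the lower bound of \thmref{lower bound thm}.

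For (a), the equivalent homological statement is $Q_n^\vee[\Gr_d(\R^m)]=0$, where $Q_n^\vee$ denotes the dual action on mod~$2$ homology and $[\Gr_d(\R^m)]$ is the fundamental class.  For $n=0$ this is Wu's formula combined with $w_1(T\Gr_d(\R^m))=0$.  For $n\geq 1$, I would iterate the recursion $Q_n = [Sq^{2^n},Q_{n-1}]$: starting from $Q_0 x = 0$ for $x$ lifted from integral cohomology shows $Q_n(x)\in \im(Sq^1)$ on such $x$; and the commutation $Q_n Sq^1 = Sq^1 Q_n$ gives $Q_n(\im Sq^1)\subseteq \im(Sq^1)$.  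By the BSS collapse at $E_2$ (introduction fact~(4)), every class in $H^*(\Gr_d(\R^m);\Z/2)$ is a sum of a lifted class, a class in $\im(Sq^1)$, and a class with nonzero $Sq^1$.  On the first two types the argument just given puts $Q_n(y)$ into $\im(Sq^1)\cap H^N$, which equals $0$ by Wu and orientability.  The third type is handled by a pairing argument against the orientable PD form.

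For (b), I would invoke Poincar\'e--Lefschetz duality on $M = \Gr_d(\R^m)$ applied to $A = \Gr_d(\R^{m-1})$: cap with $[M]$ gives the iso $\widetilde H^k(C_d(\R^m);\Z/2)\cong H_{N-k}(M\setminus A;\Z/2)$, and the deformation retract $M\setminus A\simeq \Gr_{d-1}(\R^{m-1})$ of \secref{cofib subsection} supplies the linear iso predicted by (b).  Since $Q_n$ is primitive in the Steenrod algebra, the derivation identity $Q_n^\vee(\alpha\frown[M]) = Q_n(\alpha)\frown[M] + \alpha\frown Q_n^\vee[M]$ combined with (a) shows this iso intertwines the cohomological $Q_n$ on $\widetilde H^*(C_d(\R^m))$ with the homological $Q_n^\vee$ on $H_*(\Gr_{d-1}(\R^{m-1}))$; Kronecker duality on $\Gr_{d-1}(\R^{m-1})$ then presents $(\widetilde H^*(C_d(\R^m)),Q_n)$ as the linear dual chain complex of $(H^{N-*}(\Gr_{d-1}(\R^{m-1})),Q_n)$.

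For (c), assume \conjref{collapsing conj} for $(n,d-1,m-1)$ and $(n,d,m-1)$.  The $Q_n$-homology long exact sequence of the cofiber SES yields
\[ k_{Q_n}(\Gr_d(\R^m)) \leq k_{Q_n}(C_d(\R^m)) + k_{Q_n}(\Gr_d(\R^{m-1})), \]
with equality iff every connecting map vanishes, equivalently iff the $Q_n$-SES asserted in (c) holds.  By (b), $k_{Q_n}(C_d(\R^m)) = k_{Q_n}(\Gr_{d-1}(\R^{m-1}))$, so the two hypotheses rewrite the right-hand side as the conjectural value for $(n,d,m)$, using the identity $\binom{2^{n+1}-1}{d-2i}+\binom{2^{n+1}-1}{d-2i-1}=\binom{2^{n+1}}{d-2i}$.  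Conversely, \lemref{AHSS & ASS lem} combined with \thmref{lower bound thm} gives $k_{Q_n}(\Gr_d(\R^m))\geq k_n(\Gr_d(\R^m))\geq$ the same conjectural value.  Equality throughout produces the conjecture for $(n,d,m)$, the $Q_n$-SES, $k(n)$-orientation (via AHSS collapse and (a), since the top class survives the only nontrivial differential $d_{2^{n+1}-1}$), and --- by the analogous squeeze in $K(n)$-cohomology --- the $K(n)$-SES.  The main obstacle is completing the ``a-type'' case in the proof of (a), where the pairing argument must rule out any nonzero contribution of the top class to the image of $Q_n$; alternative routes via the Lenart formula of \secref{lenart formula sec} or a direct characteristic-class computation of $Q_n^\vee[\Gr_d(\R^m)]$ may well be cleaner.
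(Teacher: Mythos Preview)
Your arguments for (b) and (c) are sound and track the paper's logic. Part (c) is exactly the paper's dimension squeeze. For (b), your Poincar\'e--Lefschetz route is a legitimate alternative to the paper's more hands-on argument: the paper instead uses the Thom isomorphism $\widetilde H^*(C_d(\R^m)) \cong H^*(\Gr_{d-1}(\R^{m-1}))\cdot \bar w_{m-d}$ to transport $Q_n$ to the twisted differential $\widehat Q_n(y)=Q_n(y)+y\alpha_n$ of \propref{twisted prop}, and then checks directly from (a) that the pairing $(x,y)\mapsto xy\bar w_{m-d}$ into the top degree satisfies $Q_n(x)\cdot y = x\cdot\widehat Q_n(y)$. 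Both arguments consume (a) in the same essential way.

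The real gap is in (a), and you acknowledge it yourself. Your inductive scheme does not go through: from $Q_{n-1}x\in\im Sq^1$ and $Q_n=[Sq^{2^n},Q_{n-1}]$ you would need both $Sq^{2^n}(\im Sq^1)\subseteq\im Sq^1$ (false in general, since $Sq^{2^n}Sq^1$ is admissible and not of the form $Sq^1(\cdot)$) and that $Sq^{2^n}x$ is again integrally lifted (also false in general). And your ``third type'' --- classes with $Sq^1y\neq 0$ --- is left to an unspecified pairing argument. The paper bypasses all of this with a short direct computation in the Schubert basis using Lenart's formula: for any $\lambda$ of weight $d(m-d)-(2^{n+1}-1)$, the skew shape $((m-d)^{d})/\lambda$ is automatically connected, hence (if it contributes at all) a border strip lying along the last row, the last column, or their union; in each of these three cases Lenart's coefficient $d_{\lambda,((m-d)^d)}$ vanishes mod $2$ precisely because the bottom-right box has even content $m-2d$. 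This is the clean route you already flagged as an alternative; it is what actually works.
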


We prove \thmref{even m properties thm} in \secref{m even section}. We make use of the additive basis $\{s_{\lambda}\}$ dual to the classical Schubert cells. Here $\lambda$ runs through partitions having at most $d$ parts, each no bigger than $m-d$.  In \cite{lenart}, Cristian Lennart gave a combinatorial formula for $Q_n(s_\lambda)$, and we use this to prove (a).  Duality statement (b) follows quite formally from (a), and (c) follows easily from (b). \\

When $m$ is odd, the analogues of statements (a) and (b) are false, and, for $d \geq 3$, the full behavior of the connecting map in the $Q_n$-homology long exact sequence,
$$ \delta: H^*(\Gr_d(\R^{m-1});Q_n) \ra \widetilde H^{*+2^{n+1}-1}(C_d(\R^{m});Q_n),$$
is as yet unclear to the authors.  In \secref{speculation section}, we will prove analogues of \thmref{collapse thm} and \thmref{lower bound thm} for $C_d(\R^m)$, and then speculate on behavior of $\delta$ that would be compatible with all of our computations.

However, when $d=2$, we have the following result.

\begin{thm} \label{d=2 odd m boundary thm}  Let $m>2^{n+1}$ be odd.  Then $k_{Q_n}(C_2(\R^{m})) = 2^{n+1}-2$ and the map
$$\widetilde H^{*}(C_2(\R^{m});Q_n) \xra{p^*} H^{*}(\Gr_2(\R^{m});Q_n)$$
is zero, so that there is a short exact sequence
$$ 0 \ra H^*(\Gr_2(\R^m); Q_n) \xra{i^*} H^*(\Gr_2(\R^{m-1}); Q_n) \xra{\delta} \widetilde H^{*+2^{n+1}-1}(C_2(\R^{m});Q_n) \ra 0, $$
\end{thm}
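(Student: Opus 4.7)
The plan is to compute $k_{Q_n}(C_2(\R^m))$ directly, and then to deduce the vanishing of $p^*$ from a dimension count in the $Q_n$-homology long exact sequence coming from the short exact sequence of Steenrod modules in \secref{cofib subsection}.

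By \secref{cofib subsection}, $\widetilde H^*(C_2(\R^m); \Z/2)$ is the cohomology of the Thom space of the rank-$(m-2)$ canonical normal bundle $\nu$ over $\Gr_1(\R^{m-1}) = \RP^{m-2}$, and the Thom isomorphism identifies it with $H^*(\RP^{m-2}; \Z/2) = \Z/2[x]/(x^{m-1})$. Following \secref{twisted subsection}, $Q_n$ then acts as a twisted operation $Q_n^{tw}(y) = Q_n(y) + P_n(x) y$, where $P_n(x)$ is defined by $Q_n(u_\nu) = P_n(x) u_\nu$. To pin down $P_n(x)$, I would exploit $L \oplus \nu \cong \epsilon^{m-1}$: the product $u_L u_\nu$ is the Thom class of a trivial bundle and hence, being a suspension of $1 \in H^0$, is killed by $Q_n$; meanwhile, the standard identification $\text{Th}(L) \cong \RP^{m-1}$ sends $u_L$ to the degree-$1$ generator $x$, so $Q_n(u_L) = Q_n(x) = x^{2^{n+1}} = x^{2^{n+1}-1} u_L$. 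Applying the derivation identity $Q_n(u_L)u_\nu + u_L Q_n(u_\nu) = 0$ and cancelling the factor $u_L u_\nu$ then forces $P_n(x) = x^{2^{n+1}-1}$, nonzero in $\Z/2[x]/(x^{m-1})$ since $m > 2^{n+1}$.

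Combining this with the derivation formula $Q_n(x^b) = b \cdot x^{b+2^{n+1}-1} \pmod 2$ yields $Q_n^{tw}(x^b) = (b+1) x^{b+2^{n+1}-1} \pmod 2$: zero on odd-index monomials, and $x^{b + 2^{n+1}-1}$ (understood as zero when the exponent exceeds $m-2$) on even-index monomials. Writing $m = 2^{n+1} + 2l - 1$ with $l \geq 1$, a direct count gives $\dim Z = 2^{n+1} + l - 2$ and $\dim B = l$, so $k_{Q_n}(C_2(\R^m)) = 2^{n+1} - 2$.

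For the vanishing of $p^*$, \thmref{d=2 thm} supplies $\dim H^*(\Gr_2(\R^m); Q_n) = \binom{2^{n+1}-1}{2} + l$ and $\dim H^*(\Gr_2(\R^{m-1}); Q_n) = \binom{2^{n+1}}{2} + (l-1)$; their difference equals $2^{n+1} - 2 = k_{Q_n}(C_2(\R^m))$. For any long exact sequence $\cdots \to A \xra{p^*} B \xra{i^*} C \xra{\delta} A[s] \to \cdots$ of graded vector spaces of finite total dimension (with $s = 2^{n+1} - 1$), the identities $\dim A^k = \operatorname{rank}(\delta_{k-s}) + \operatorname{rank}(p^*_k)$, $\dim B^k = \operatorname{rank}(p^*_k) + \operatorname{rank}(i^*_k)$, $\dim C^k = \operatorname{rank}(i^*_k) + \operatorname{rank}(\delta_k)$ sum to $\dim A + \dim C - \dim B = 2\operatorname{rank}(\delta)$. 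Inserting our dimensions gives $2 \dim A = 2\operatorname{rank}(\delta)$, hence $\delta$ is surjective and $p^* = 0$ by exactness. The main obstacle is the identification of $P_n(x)$ and the ensuing description of $Q_n^{tw}$; once those are in hand, both the dimension count and the LES chase are essentially mechanical.
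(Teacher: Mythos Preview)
Your computation of $k_{Q_n}(C_2(\R^m))=2^{n+1}-2$ is correct and matches the paper's argument: identify the Thom class, determine the twisting class $\alpha_n=w_1^{2^{n+1}-1}$, and read off the twisted $Q_n$--action on the basis $\{w_1^i\bar w_{m-2}\}$.

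The argument for $p^*=0$, however, is circular. You invoke \thmref{d=2 thm} to obtain the exact values of $k_{Q_n}(\Gr_2(\R^m))$ and $k_{Q_n}(\Gr_2(\R^{m-1}))$, but in this paper \thmref{d=2 thm} is \emph{deduced from} \thmref{d=2 odd m boundary thm}: its proof is an induction on $m$ whose odd step reads $k_{Q_n}(\Gr_2(\R^m))=k_{Q_n}(\Gr_2(\R^{m-1}))-\bar k_{Q_n}(C_2(\R^m))$, and that equality is exactly the content of the vanishing of $p^*$ you are trying to establish. Nor can the circularity be broken by replacing the exact values with the available inequalities: by induction you may assume the exact value of $\dim C=k_{Q_n}(\Gr_2(\R^{m-1}))$, but for $\dim B=k_{Q_n}(\Gr_2(\R^m))$ you only have the lower bound $\dim B\ge \binom{2^{n+1}-1}{2}+l$ from \thmref{lower bound thm} (via $k_{Q_n}\ge k_n$). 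Your long-exact-sequence identity then yields only $\operatorname{rank}(\delta)\le \dim A$, which is automatic; to force $\operatorname{rank}(\delta)=\dim A$ you would need an \emph{upper} bound on $\dim B$, which is not available at this point.

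The paper avoids this by proving $p^*=0$ directly: it lists explicit representatives for $\widetilde H^*(C_2(\R^m);Q_n)$ (two families of odd-exponent classes $w_1^{\text{odd}}\bar w_{m-2}$) and then exhibits, for each family, an element of $H^*(\Gr_2(\R^m);\Z/2)$ whose $Q_n$--image is that representative. Concretely, with $m=2^{n+1}+1+2l$, one shows $Q_n(w_1\bar w_{2l})=w_1\bar w_{2^{n+1}-1+2l}$ (in $H^*(\Gr_2(\R^\infty))$) and $Q_n(w_2^{2l+1})=w_1^{2l+2}\bar w_{2^{n+1}-1+2l}$ (using the relations in $H^*(\Gr_2(\R^m))$); multiplying by even powers $w_1^{2j}$ then covers all the representatives. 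This is genuine extra input beyond the dimension count, and it is what your proposal is missing.
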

From this \thmref{d=2 thm} quickly follows and one can deduce that, in this case, there is a short exact sequence
$$ 0 \ra  K(n)^*(\Gr_2(\R^m)) \xra{i^*} K(n)^*(\Gr_2(\R^{m-1})) \xra{\delta} \widetilde K(n)^{*+1}(C_2(\R^m)) \ra 0. $$

We prove \thmref{d=2 odd m boundary thm} in \secref{d=2 section}. The tools we use are very different from those used in proving \thmref{even m properties thm}: we work with the classical presentation of $H^*(\Gr_d(\R^m);\Z/2))$ as a ring of Stiefel--Whitney classes.

\subsection{Comparison with other work}
\mbox{}
\vspace{.1in}

When comparing our work to what has come before, the first thing to say is that the outcome of our calculations -- though not the methods -- are in line with the classical calculations first made by C.Ehresmann in 1937 \cite{ehresmann}.  He determined the additive structure of both $H^*(\Gr_d(\R^m);\Z/2)$ and $H^*(\Gr_d(\R^m);\Q)$.  He also showed that all the torsion in $H^*(\Gr_d(\R^m);\Z)$ was of order 2; in modern terms this is equivalent to showing that the Bockstein spectral sequence computing $H^*(\Gr_d(\R^m);\Z)$ collapses after the first nonzero differential given by $Q_0 = Sq^1 = \beta$.

Calculating the Morava $K$--theories of $\Gr_d(\R^{\infty}) = BO(d)$ was done first by Kono and Yagita \cite{kono yagita}, and then, with a simpler proof, by Kitchloo and Wilson \cite{kitchloo wilson}.  Again, the AHSS computing $K(n)^*(BO(d))$ collapses after the first nonzero differential, but the collapsing is for an elementary reason: $H^*(BO(d);Q_n)$ is concentrated in even degrees.  Indeed, one quickly learns that the complexification map $BO(d) \ra BU(d)$ induces an epimorphism $K(n)^*(BU(d)) \ra K(n)^*(BO(d))$, so that $K(n)^*(BO(d))$ is generated by Chern classes $c_1, \dots, c_d$.

An equivalent statement is that  $H^*(BO(d);Q_n)$ is generated by the classes $w_1^2, \dots, w_d^2$.  These will still be permanent classes in the AHSS converging to $K(n)^*(\Gr_d(\R^m))$, but now we have odd dimensional classes as well, with the number of these seemingly growing as $d$ and $m$ grow.

Finally, we point out that we do not attempt to describe $K(n)^*(\Gr_d(\R^m))$ as a $K(n)^*$--algebra.  Our results do tell us something about this, however.  In the situation of \thmref{collapse thm}, the known algebra $H^*(\Gr_d(\R^m); \Z/2) \otimes K(n)^*$ will be an associated graded. Similarly, whenever our conjecture is valid, $H^*(\Gr_d(\R^m); Q_n) \otimes K(n)^*$ would be an associated graded of the $K(n)^*$--algebra $K(n)^*(\Gr_d(\R^m))$.  What is still needed, and might be necessary to prove our conjectural collapsing in general, are sensible constructions of classes in odd degrees.

\section{The proofs of Theorems \ref{collapse thm} and \ref{lower bound thm}} \label{chromatic section}

In this section we prove Theorems \ref{collapse thm} and \ref{lower bound thm} by using our chromatic fixed point theorem \thmref{chromatic fixed point thm}.

\subsection{A fixed point formula}
\mbox{}
\vspace{.1in}

Let $G$ be a finite group, and let $V$ be an $m$--dimensional real representation of $G$.  Then $Gr_d(V)$, the space of $d$--planes in $V$, is a model for $\Gr_d(\R^m)$ with an evident $G$-action.   Here we describe $\Gr_d(V)^G$, its space of $G$-fixed points.

To state this, we need some notation.  Let $V_1, \dots, V_k$ be the irreducible real representations of $G$, let $r_i = \dim_{\R} V_i$, and let $\D_i = \End_{\R[G]}(V_i,V_i)$.  Each of the endomorphism algebras $\D_i$ will be a finite dimensional real division algebra, and thus isomorphic to $\R$, $\C$, or $\mathbb H$, and $\dim_{\R} \D_i$ will divide $r_i$.

\begin{prop}\label{fixed point formula}  If $\displaystyle V = V_1^{m_1} \oplus \cdots \oplus V_k^{m_k}$, then there is a homeomorphism
$$ \Gr_d(V)^G = \bigsqcup_{j_1 r_1 + \ldots + j_k r_k=d}
    \Gr_{j_1}(\D_1^{m_1})  \times \cdots
    \times \Gr_{j_k}(\D_k^{m_k}).$$
\end{prop}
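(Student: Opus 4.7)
My plan is to identify $\Gr_d(V)^G$ with the space of $d$-dimensional $G$-subrepresentations of $V$, and then exploit uniqueness of the isotypic decomposition. A point of $\Gr_d(V)^G$ is a $G$-invariant $d$-plane $W \subseteq V$, which is the same as a $d$-dimensional real $G$-subrepresentation. Because $V = V_1^{m_1} \oplus \cdots \oplus V_k^{m_k}$ is an isotypic decomposition, Schur's lemma forces $W = W_1 \oplus \cdots \oplus W_k$ with $W_i = W \cap V_i^{m_i} \cong V_i^{j_i}$ for a unique tuple $(j_1,\ldots,j_k)$ of nonnegative integers, and the dimension constraint becomes $j_1 r_1 + \cdots + j_k r_k = d$. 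This gives a set-theoretic decomposition of $\Gr_d(V)^G$ as a disjoint union, indexed by such tuples, of products $\prod_i \{W_i \subseteq V_i^{m_i} : W_i \cong V_i^{j_i}\}$.

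Next I would identify the $i$-th factor with $\Gr_{j_i}(\D_i^{m_i})$ via Schur's lemma. Since $\End_{\R[G]}(V_i,V_i) = \D_i$, one has $\Hom_{\R[G]}(V_i^{j_i}, V_i^{m_i}) \cong M_{m_i \times j_i}(\D_i)$ as right $\D_i$-modules, and such a $G$-map is injective exactly when its $\D_i$-matrix has maximal rank $j_i$. Two such injections share the same image iff they differ by right multiplication by $GL_{j_i}(\D_i) = \mathrm{Aut}_{\R[G]}(V_i^{j_i})$. Hence the set of subrepresentations of $V_i^{m_i}$ isomorphic to $V_i^{j_i}$ is exactly the $\D_i$-Grassmannian $\Gr_{j_i}(\D_i^{m_i})$, realized as the Stiefel variety of rank-$j_i$ $\D_i$-matrices modulo $GL_{j_i}(\D_i)$.

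Combining these steps yields the claimed bijection. The main point that I expect to require the most care is upgrading this bijection to a homeomorphism. Continuity of the assembly map $(W_1,\ldots,W_k) \mapsto W_1 \oplus \cdots \oplus W_k$ is transparent. For continuity of the inverse, upper semicontinuity of the kernel dimensions of the projections $W \to V/V_i^{m_i}$, together with the constraint $\sum_i \dim_\R (W \cap V_i^{m_i}) = d$ on $\Gr_d(V)^G$, forces each $\dim_\R(W \cap V_i^{m_i})$ to be locally constant on $\Gr_d(V)^G$. This simultaneously splits $\Gr_d(V)^G$ into clopen pieces indexed by the admissible tuples and makes the decomposition map $W \mapsto (W_1,\ldots,W_k)$ continuous on each piece. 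Compactness and Hausdorffness then promote the continuous bijection on each piece to a homeomorphism, completing the argument.
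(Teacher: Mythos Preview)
Your proposal is correct and follows essentially the same route as the paper: identify fixed $d$-planes with $G$-subrepresentations, split via the isotypic decomposition, and use Schur's lemma to identify each isotypic piece with a Grassmannian over $\D_i$. The paper's proof is briefer and does not spell out the continuity/homeomorphism argument you give in your final paragraph; your treatment of that point is more careful than the paper's.
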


\begin{proof} The fixed point space $\Gr_d(V)^G$ will be the space of sub-$G$-modules $W < V$ of real dimension $d$.  Such a $G$--module $W$ will decompose canonically as $W = W_1 \oplus \dots \oplus W_k$, with $W_i < V_i^{m_i}$.  If $d_i = \dim_{\R}W_i$, then $d_1 + \dots + d_k=d$. Thus we have a decomposition
$$ \Gr_d(V)^G = \bigsqcup_{d_1 + \ldots + d_k=d}
    \Gr_{d_1}(V_1^{m_1})^G \times \Gr_{d_2}(V_2^{m_2})^G \times \cdots
    \times \Gr_{d_k}(V_k^{m_k}).$$

A submodule $W_i$ of $V_i^{m_i}$ must be isomorphic to $V_i^{j}$ for some $j$, thus $\Gr_{d_i}(V_i^{m_1})^G$ will be empty unless $d_i = j_ir_i$ for some $j_i$.

Finally, using that $\Hom_{\R[G]}(V_i^{j_i}, V_i^{m_i}) = \Hom_{\D}(\D^{j_i},\D^{m_i})$, one deduces that the submodules of $V_i^{m_i}$ isomorphic to $V_i^{j_i}$ correspond to the $\D$-subspaces of $\D^{m_i}$ of dimension $j_i$ over $\D$.  Thus there is a homeomorphism
$$\Gr_{j_ir_i}(V_i^{m_i})^G = \Gr_{j_i}(\D_i^{m_i}).$$
\end{proof}

\begin{cor} \label{kn fixed point formula} If $\displaystyle V = V_1^{m_1} \oplus \cdots \oplus V_k^{m_k}$, then, for any $n$,
$$ k_n(\Gr_d(V)^G) = \sum_{j_1 r_1 + \ldots + j_k r_k=d}
    k_n(\Gr_{j_1}(\D_1^{m_1})) \cdots
    k_n(\Gr_{j_k}(\D_k^{m_k})).$$
\end{cor}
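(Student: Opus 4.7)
The plan is to invoke \propref{fixed point formula} to rewrite $\Gr_d(V)^G$ as a disjoint union of products of (real, complex, or quaternionic) Grassmannians, and then apply two standard properties of Morava $K$-theory: additivity on disjoint unions and multiplicativity on products.

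First I would recall that $K(n)^*$ is additive on finite disjoint unions: since $K(n)^*(X \sqcup Y) \cong K(n)^*(X) \oplus K(n)^*(Y)$, one has $k_n(X \sqcup Y) = k_n(X) + k_n(Y)$. Second, I would use the K\"unneth isomorphism, which is available because $K(n)_*$ is a graded field: for finite CW complexes $X$ and $Y$,
$$ K(n)^*(X \times Y) \;\cong\; K(n)^*(X) \otimes_{K(n)_*} K(n)^*(Y), $$
and therefore $k_n(X \times Y) = k_n(X) \cdot k_n(Y)$.

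Now applying \propref{fixed point formula} and these two properties directly yields
$$ k_n(\Gr_d(V)^G) \;=\; \sum_{j_1 r_1 + \ldots + j_k r_k = d} k_n\!\left(\Gr_{j_1}(\D_1^{m_1})\right) \cdots k_n\!\left(\Gr_{j_k}(\D_k^{m_k})\right), $$
which is the desired formula. There is no real obstacle here — the only point deserving any care is that each factor $\Gr_{j_i}(\D_i^{m_i})$ is a finite CW complex, so the K\"unneth formula applies iteratively across the finite product, and the outer disjoint union is likewise finite, so additivity may be iterated without subtlety.
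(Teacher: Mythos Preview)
Your proof is correct and follows the same approach as the paper: invoke \propref{fixed point formula} and then use the K\"unneth theorem for $K(n)^*$ (together with additivity over disjoint unions) to conclude. The paper's proof is more terse, but the content is identical.
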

\begin{proof}  A consequence of the Kunneth theorem for $K(n)_*$ is that $k_n(X \times Y) = k_n(X)k_n(Y)$.  Thus the corollary follows from the proposition.
\end{proof}

\begin{rem}  If $\D = \C \text{ or } \mathbb H$, then $\Gr_d(\D^m)$ has a CW structure with $\binom{m}{d}$ cells that are all even dimensional, and thus $k_n(\Gr_d(\D^m)) = \binom{m}{d}$ for all $n$.
\end{rem}

\subsection{Proof of \thmref{collapse thm}}
\mbox{}
\vspace{.1in}

\thmref{collapse thm} says that if $m \leq 2^{n+1}$ then $k_n(\Gr_d(\R^m)) = \binom{m}{d}$. Using \thmref{chromatic fixed point thm} and \propref{fixed point formula}, we prove this by induction on $n$.

The $n=0$ case of the theorem is easy to check as
$$
\Gr_d(\R^0) =
\begin{cases}
* & \text{if } d = 0 \\ \emptyset & \text{otherwise}
\end{cases}
\text{ and }
\Gr_d(\R^1) =
\begin{cases}
* & \text{if } d = 0,1 \\ \emptyset & \text{otherwise}
\end{cases}.
$$

For the inductive step, assume that if $p \leq 2^{n}$ then $k_{n-1}(\Gr_d(\R^p)) = \binom{p}{d}$.

Let $m \leq 2^{n+1}$.  As it is clear that $k_n(\Gr_d(\R^m)) \leq \binom{m}{d}$, our goal is to show that $k_n(\Gr_d(\R^m)) \geq \binom{m}{d}$.

Let $C_2$ be the cyclic group of order 2.  To get our needed lower bound, our strategy will be to make $\R^m$ into a $C_2$--module, and then apply \thmref{chromatic fixed point thm}.

The group $C_2$ has two irreducible 1--dimensional real representations: call them $L_1$ and $L_2$.  Since $m \leq 2^{n+1}$, we can write $m$ as $m=p+q$ with both $p \leq 2^n$ and $q \leq 2^n$.  Now let $V = L_1^p \oplus L_2^q$, an $m$--dimensional real representation of $C_2$.

Applying \propref{fixed point formula}, we see that
$$ \Gr_d(V)^{C_2} = \bigsqcup_{i+j=d} \Gr_i(\R^p) \times \Gr_j(\R^q).$$

Applying \thmref{chromatic fixed point thm} to this, we learn that
\begin{equation*}
\begin{split}
k_n(\Gr_d(\R^m)) &
\geq  \sum_{i+j=d} k_{n-1}(\Gr_i(\R^p))k_{n-1}(\Gr_j(\R^q)) \\
  & = \sum_{i+j=d} \binom{p}{i}\binom{q}{j} \text{ \ (by inductive hypothesis)} \\
  & = \binom{m}{d}.
\end{split}
\end{equation*}

\begin{rem}  The same inductive proof can be used to prove the classical result that $\dim_{\mathbb Z/2} H^*(\Gr_d(\R^m); \mathbb Z/2) = \binom{m}{d}$ for all $m$ and $d$, with our chromatic fixed point theorem \thmref{chromatic fixed point thm} replaced by the classical theorem of Ed Floyd \cite[Theorem 4.4]{floyd TAMS 52}: if the cyclic group $C_p$ acts on a finite CW complex $X$, then
$ \dim_{\mathbb Z/p} H^*(X; \mathbb Z/p) \geq \dim_{\mathbb Z/p} H^*(X^{C_p}; \mathbb Z/p)$.  It would be interesting to know if this argument was known to Floyd, or others, like Bob Stong, who regularly worked with these sorts of group actions.

\end{rem}

\subsection{Proof of \thmref{lower bound thm}}
\mbox{}
\vspace{.1in}

The strategy of the proof of \thmref{lower bound thm} is the same as the proof in the last subsection: we get a lower bound on $k_n(\Gr_d(\R^m))$ by letting a cyclic 2--group act on $\R^m$ and applying \thmref{chromatic fixed point thm}.

In this case, the representation theory of $C_2$ is not rich enough to give us a big enough lower bound, but a well chosen real representation of the group $C_4$ of order 4 works better.  Curiously, in our calculation of $k_{n-1}$ of the resulting fixed point space, we are able to use our already proven \thmref{collapse thm}, so the proof is not by induction, but more direct.

The group $C_4$ has three irreducible real representations:  $L_1$ and $L_2$ of dimension 1, and $R$ of real dimension 2. Note that $\End_{\R[C_4]}(R) \simeq \C$.

Now let $m = 2^{n+1} - \epsilon + 2l$ with $\epsilon =$ 0 or 1, and $l \geq 0$.  We define an $m$ dimensional real representation $V$ of $C_4$ by $V = L_1^{2^n} \oplus L_2^{2^n-\epsilon} \oplus R^l$.

Applying \propref{fixed point formula}, we see that
$$ \Gr_d(V)^{C_4} = \bigsqcup_{j+k+2i=d} \Gr_j(\R^{2^n}) \times \Gr_k(\R^{2^n-\epsilon})\times \Gr_i(\C^l).$$

Applying \thmref{chromatic fixed point thm} to this, we learn that
\begin{equation*}
\begin{split}
k_n(\Gr_d(\R^m)) &
\geq  \sum_{j+k+2i=d} k_{n-1}(\Gr_j(\R^{2^n}))k_{n-1}(\Gr_k(\R^{2^n-\epsilon}))k_{n-1}(\Gr_i(\C^{l})) \\
  & = \sum_{j+k+2i=d} \binom{2^n}{j}\binom{2^n-\epsilon}{k}\binom{l}{i} \text{ \ (using \thmref{collapse thm})} \\
  & = \sum_i \left [ \sum_{j+k=d-2i} \binom{2^n}{j}\binom{2^n-\epsilon}{k} \right ] \binom{l}{i} \\
  & = \sum_i \binom{2^{n+1}-\epsilon}{d-2i}\binom{l}{i}.
\end{split}
\end{equation*}

\section{The $Q_n$ homology of $Gr_d(\R^m)$: background material} \label{Qn section}

\subsection{The AHSS and the ASS for Morava $K$--theory} \label{AHSS = ASS subsection}
\mbox{}
\vspace{.1in}

Let $n \geq 1$.  We recall the structure of the AHSS converging to $K(n)^*(X)$ (as always, in this paper, with $p=2$). It is a spectral sequence of graded $K(n)^* = \Z/2[v_n^{\pm}]$ algebras with
$$ E_2^{*,\star}(X) = H^*(X;K(n)^\star) = H^*(X;\Z/2)[v_n^{\pm}].$$
Here $v_n$ has cohomological degree $2-2^{n+1}$.

Sparseness of the rows implies that the differential $d_r$ will be zero unless $r = s(2^{n+1}-2)+1$ for some $s$.  The first possible nonzero differential, $d_{2^{n+1}-1}$, satisfies the following formula \cite{Yagita 80}: given $x \in E_2^{*,0}(X)= H^*(X;\Z/2)$,
$$ d_{2^{n+1}-1}(x) = Q_n(x)v_n.$$

It follows that $E_{2^{n+1}}(X) \simeq H^*(X;Q_n)[v_n^{\pm}]$, and so the dimension of $E_{2^{n+1}}(X)$ as a $K(n)^*$--vector space will equal $k_{Q_n}(X)$, the dimension of the $Q_n$--homology of $X$.  One immediately deduces part of \lemref{AHSS & ASS lem}: $k_{Q_n}(X) = k_n(X)$ if and only if the AHSS converging to $K(n)^*(X)$ collapses at $E_{2^{n+1}}(X)$.

To continue with the proof of \lemref{AHSS & ASS lem}, let $cE_r^{*,\star}(X)$ denote the terms of the AHSS computing $k(n)^*(X)$, a 4th quadrant spectral sequence.  Note that $cE_2^{*,\star} = H^*(X;\Z/2)[v_n]$ embeds in $E_2^{*,\star}(X) = H^*(X;\Z/2)[v_n^{\pm}]$, and equals it for $\star\leq 0$, and that the latter spectral sequence is obtained from the former by inverting $v_n$.

It follows that $cE_{2^{n+1}}^{*,\star}(X)= E_{2^{n+1}}^{*,\star}(X)$ for $\star < 0$, with the map on the 0-line between the spectral sequences corresponding to the epimorphism $Z^*(X;Q_n) \twoheadrightarrow H^*(X;Q_n)$.  From this, one sees that any higher differential in the $k(n)^*(X)$ AHSS would be detected in the $K(n)^*(X)$ AHSS. Since this 2nd spectral sequence is the localization of the first, we an conclude that the $K(n)^*(X)$ AHSS collapses at $E_{2^{n+1}}(X)$ if and only if the $k(n)^*(X)$ AHSS collapses at $cE_{2^{n+1}}(X)$.

Next we note that the AHSS spectral sequence $cE_r^{*,\star}(X)$ identifies with the ASS computing $k(n)^*(X)$ with suitable re-indexing, with $cE_{2^{n+1}}^{*,\star}(X)$ corresponding to the Adams $E_2$ term.  Firstly, a result of C.R.F.Maunder \cite{maunder 1963} implies that the AHSS converging to $[X,k(n)]_*$ can be constructed by taking the Postnikov filtration of the spectrum $k(n)$. But the Postnikov tower for $k(n)$ is also an Adams tower: as described in in the survey paper \cite[\S 5]{Wurgler 91}, there is a cofibration sequence
$$ \Sigma^{2^{n+1}-2} k(n) \xra{v_n} k(n) \xra{\pi} H\Z/2 \xra{\bar Q_n} \Sigma^{2^{n+1}-1} k(n)$$ such that $\Sigma^{2^{n+1}-1}\pi \circ \bar Q_n = Q_n$ and $\pi$ induces the epimorphism $A \ra A/AQ_n$ on mod 2 cohomology.

Finally, we note that, when $n=0$, one still has the cofibration sequence as above, with now with $v_0 = 2$, so that the ASS for $k(0) = H\Z$ is similarly related to the Bockstein spectral sequence.

\subsection{The description of $H^*(\Gr_d(\R^m);\Z/2)$ via Stiefel--Whitney classes.}
\mbox{}

We recall classical results that are either explicitly in \cite{milnor stasheff} or can easily be deduced from the material there.

Let \(w_1,\ldots,w_d\) denote the Stiefel--Whitney classes of the
canonical $d$--dimensional bundle $\gamma_d$ over \(\Gr_d(\R^{\infty})\).  One has
$$ H^*(\Gr_d(\R^{\infty});\Z/2) = \Z/2[w_1,\dots,w_d].$$
Dual classes $\bar w_1, \bar w_2, \dots$ are defined by the equation
$$ (1+w_1 + \cdots + w_d)(1+ \bar w_1 + \bar w_2 + \cdots)=1,$$
and this allows one to write the classes $\bar w_k$ as polynomials in $w_1, \dots, w_d$.

The inclusion
\(\Gr_d(\R^{m}) \hookrightarrow \Gr_d(\R^\infty)\) then induces a
surjective ring homomorphism
\begin{align*}
  H^*(\Gr_d(\R^{\infty});\Z/{2}) &\to H^*(\Gr_d(\R^{m});\Z/{2})
\end{align*}
with kernel $J(d,m-d) = (\bar w_k \ | \ k>m-d)$. Now $\bar w_k$ can be interpreted as $w_k(\gamma_d^{\bot})$ where $\gamma_d^{\bot}$ is the $(m-d)$--dimensional bundle complimentary to $\gamma_d$.

We record some useful consequences. To state these, it is useful to let
$$i: \Gr_d(\R^{m-1}) \hookrightarrow \Gr_d(\R^m)$$ be the inclusion induced by the inclusion $\R^{m-1} \hra \R^m$, and to let
$$j: \Gr_{d-1}(\R^{m-1}) \hookrightarrow \Gr_d(\R^m)$$
be the inclusion sending $V \subset \R^{m-1}$ to $V \oplus \R \subset \R^m$.

\begin{lem} \label{w and bar w lemma} {\bf (a)} \ The ideal $J(d,m-d)$ is generated by the $d$ classes
$\bar{w}_{m-d+1}, \bar{w}_{m-d+2} ,\ldots, \bar{w}_{m}$. \\

\noindent{\bf (b)} \ In $H^*(\Gr_d(\R^{m});\Z/{2})$, $w_d \bar w_{m-d} = 0$. \\

\noindent{\bf (c)} \ $\ker\{i^*\} = (\bar w_{m-d}) \subset H^*(\Gr_d(\R^{m});\Z/{2}).$ \\

\noindent{\bf (d)} \ $\ker\{j^*\} = (w_{d}) \subset H^*(\Gr_d(\R^{m});\Z/{2}).$ \\
\end{lem}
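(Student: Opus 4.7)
The plan is to deduce all four parts from the presentation $H^*(\Gr_d(\R^m);\Z/2) = \Z/2[w_1, \ldots, w_d]/J(d, m-d)$ recalled above, together with the Whitney sum recursion
$$\bar w_k = w_1 \bar w_{k-1} + w_2 \bar w_{k-2} + \cdots + w_d \bar w_{k-d}, \quad k \geq 1,$$
obtained mod $2$ by expanding $(1+w_1+\cdots+w_d)(1+\bar w_1+\bar w_2+\cdots)=1$ with the conventions $\bar w_0 = 1$ and $\bar w_j = 0$ for $j<0$. Everything below is an elementary manipulation of this recursion and the two inclusions.

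For (a), I induct on $k > m$ to show that $\bar w_k$ lies in the ideal generated by $\bar w_{m-d+1}, \ldots, \bar w_m$: the recursion writes $\bar w_k$ as a sum of products $w_j \bar w_{k-j}$, and for $k>m$ every $\bar w_{k-j}$ is either already one of these $d$ generators or, by induction, lies in the ideal they generate. Part (b) is then immediate: specialize the recursion to $k=m$ and rearrange to isolate $w_d \bar w_{m-d}$; all other terms contain some $\bar w_j$ with $j \geq m-d+1$ and hence vanish in $H^*(\Gr_d(\R^m);\Z/2)$.

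Parts (c) and (d) are carried out by identifying the maps $i^*$ and $j^*$ algebraically. The inclusion $i$ restricts the canonical $d$-plane bundle over $\Gr_d(\R^m)$ to the canonical bundle over $\Gr_d(\R^{m-1})$, so $i^*$ corresponds to the natural surjection $\Z/2[w_1,\ldots,w_d]/J(d,m-d) \twoheadrightarrow \Z/2[w_1,\ldots,w_d]/J(d, m-1-d)$; by (a), the target ideal is obtained from the source by adjoining the single generator $\bar w_{m-d}$, giving (c). For (d), one has $j^*\gamma_d = \gamma_{d-1} \oplus \epsilon^1$, so $j^*w_k = w_k(\gamma_{d-1})$ for $k<d$ and $j^*w_d = 0$; uniqueness of the formal inverse of the total Stiefel--Whitney class then shows $j^*\bar w_k$ equals the dual class of $\gamma_{d-1}$, i.e.\ it is $\bar w_k$ with $w_d$ set to $0$. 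Thus $j^*$ factors as the substitution $w_d \mapsto 0$ followed by the quotient by $J(d-1, m-d)$.

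The main obstacle will be in (d), where I must verify that the kernel of this factored composite, computed first in $\Z/2[w_1, \ldots, w_d]$ and then pushed down, reduces to the principal ideal $(w_d)$. Writing each generator $\bar w'_k$ of $J(d-1, m-d)$ as $\bar w_k$ plus a $w_d$-multiple, one sees the kernel upstairs is $(w_d, \bar w_{m-d+1}, \ldots, \bar w_{m-1})$; since all generators but $w_d$ already vanish modulo $J(d,m-d)$ by part (a), the kernel of $j^*$ is $(w_d)$. Once the Whitney recursion and the two bundle restriction formulas are in place, everything else is routine.
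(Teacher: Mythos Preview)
Your argument is correct. Parts (a)--(c) are essentially identical to the paper's proof: both use the Whitney recursion for (a), its degree-$m$ component for (b), and the observation $J(d,m-1-d)=J(d,m-d)+(\bar w_{m-d})$ for (c).

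For (d) you take a genuinely different route. The paper observes that $j$ factors as
\[
\Gr_{d-1}(\R^{m-1}) \;\simeq\; \Gr_{m-d}(\R^{m-1}) \xrightarrow{\ i\ } \Gr_{m-d}(\R^{m}) \;\simeq\; \Gr_{d}(\R^{m}),
\]
where the outer homeomorphisms take orthogonal complements; since these swap $w_i$'s with $\bar w_j$'s in cohomology, (d) becomes an instance of (c) with the roles of $d$ and $m-d$ exchanged. Your approach instead computes $j^*$ directly from $j^*\gamma_d=\gamma_{d-1}\oplus\epsilon^1$, identifies it with setting $w_d=0$, and then checks by hand that the preimage of $J(d-1,m-d)$ reduces to $(w_d)$ modulo $J(d,m-d)$. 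Both arguments are short; the paper's duality trick is slicker and avoids the bookkeeping with the two versions $\bar w_k$ and $\bar w'_k$, while yours is more self-contained and does not require invoking the complement homeomorphism.
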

\begin{proof}  Statement (a) follows from the recursive relations among the $\bar w_k$'s.  Statement (b) follows from the equation
$$ (1+w_1 + \cdots + w_d)(1+ \bar w_1 + \cdots + \bar w_{m-d}) = 1$$
which holds in $H^*(\Gr_d(\R^{m});\Z/{2})$.  Statement (c) follows from the fact that $J(d,m-1-d) = J(d,m)+ (\bar w_{m-d})$, and (d) follows from (c), noting that $j$ can be written as the composite
$$ \Gr_{d-1}(\R^{m-1}) \simeq \Gr_{m-d}(\R^{m-1}) \xra{i} \Gr_{m-d}(\R^{m}) \simeq \Gr_{d}(\R^{m}),$$
where the indicated homeomorphisms are given by taking complimentary subspaces (and, in cohomology, these maps swap $w_i$'s with $\bar w_j$'s).
\end{proof}

We end this subsection with a couple more facts about $H^*(\Gr_d(\R^{m});\Z/{2})$.

An additive basis for \(H^q(\Gr_d(\R^{m});\Z/{2})\) is given by the monomials
$$ \left \{w_1^{r_1}w_2^{r_2}\cdots w_{d}^{r_d} \ | \  \sum_{i=1}^d r_i \le m-d\right \},$$
so the top dimensional class is $w_d^{m-d}$ in degree $d(m-d)$.
See \cite{Jaworowski 89}.

The Wu formulae \cite[p.94]{milnor stasheff} are closed formulae for $Sq^iw_j$, and, in theory, formulae for $Q_n(w_j)$ follow.

\subsection{A description of the cofiber $C_d(\R^m)$ and its cohomology.} \label{cofib subsection}
\mbox{}

Recall that $C_d(\R^m)$ is defined as the cofiber of the inclusion $\Gr_d(\R^{m-1}) \xra{i} \Gr_d(\R^m)$.  This cofiber can be identified as a Thom space as follows.

\begin{prop} \label{cofib = thom space prop}  Let $S(\gamma_{d-1}^{\bot})$ and $D(\gamma_{d-1}^{\bot})$ be the sphere and disk bundles associated to $\gamma_{d-1}^{\bot} \ra \Gr_{d-1}(\R^{m-1})$.  There is a pushout
\begin{equation*}
\SelectTips{cm}{}
\xymatrix{
S(\gamma_{d-1}^{\bot}) \ar[d] \ar[r]^f & \Gr_d(\R^{m-1}) \ar[d]  \\
D(\gamma_{d-1}^{\bot}) \ar[r]^f & \Gr_d(\R^{m}), }
\end{equation*}
inducing a homeomorphism $f: Th(\gamma_{d-1}^{\bot}) \xra{\sim} C_d(\R^m)$, such that the composite $\Gr_{d-1}(\R^{m-1}) \xra[\sim]{\text{0-section}} D(\gamma_{d-1}^{\bot}) \xra{f} \Gr_d(\R^{m})$ is the map $j$ of \lemref{w and bar w lemma}.
\end{prop}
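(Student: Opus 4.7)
The plan is to exhibit an explicit continuous map $f\colon D(\gamma_{d-1}^{\bot}) \to \Gr_d(\R^m)$ that carries $S(\gamma_{d-1}^{\bot})$ into $\Gr_d(\R^{m-1})$ and restricts to a bijection on the complements, and then pass to the quotient. Write $\R^m = \R^{m-1}\oplus \R e_m$ and realize $\gamma_{d-1}^{\bot}$ as the subbundle of the trivial $\R^{m-1}$-bundle over $\Gr_{d-1}(\R^{m-1})$ whose fiber over $W$ is $W^{\bot}\cap \R^{m-1}$, with $D(\gamma_{d-1}^{\bot})$ and $S(\gamma_{d-1}^{\bot})$ cut out by $\|v\|\le 1$ and $\|v\|=1$ respectively. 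On this model I set
$$ f(W,v) \;=\; W \;+\; \R\bigl(v+\sqrt{1-\|v\|^2}\,e_m\bigr). $$
The vector $v+\sqrt{1-\|v\|^2}\,e_m$ has unit norm and is orthogonal to $W$, so the right-hand side is genuinely a $d$-plane and $f$ is continuous; when $\|v\|=1$ we have $f(W,v)=W+\R v\subset \R^{m-1}$, so $f|_{S(\gamma_{d-1}^{\bot})}$ factors through $\Gr_d(\R^{m-1})$, and specializing further to the zero-section gives $f(W,0)=W\oplus \R e_m = j(W)$, as required.

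Next I check that $f$ restricts to a bijection $D^{\circ}(\gamma_{d-1}^{\bot}) \to \Gr_d(\R^m)\setminus \Gr_d(\R^{m-1})$. Given such a $V$, set $W=V\cap \R^{m-1}$; since $V\not\subset \R^{m-1}$, projection onto the $e_m$-coordinate shows $\dim W = d-1$. Let $u$ be the unique unit vector in $V\cap W^{\bot}$ (orthogonal complement taken in $\R^m$) with positive $e_m$-coordinate, and write $u=v+t e_m$; then $v\in W^{\bot}\cap \R^{m-1}$ satisfies $\|v\|<1$ with $t=\sqrt{1-\|v\|^2}$, and the assignment $V\mapsto (W,v)$ inverts $f$. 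Once one verifies that this inverse is continuous on the open locus, $f\colon D(\gamma_{d-1}^{\bot})\to \Gr_d(\R^m)$ becomes a continuous surjection from a compact space to a Hausdorff space whose fibers coincide with those of the asserted pushout; hence $f$ is a quotient map, the square in the proposition is a pushout, and the induced continuous bijection $Th(\gamma_{d-1}^{\bot}) \to C_d(\R^m)$ between compact Hausdorff spaces is a homeomorphism.

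The main obstacle is the verification that $V\mapsto (V\cap \R^{m-1},v)$ is continuous on $\Gr_d(\R^m)\setminus \Gr_d(\R^{m-1})$; this reduces to standard continuity of the orthogonal-projection constructions on loci where the relevant intersection has constant dimension, and everything else follows formally from the explicit formula for $f$. A conceptual alternative would be to observe that $j$ is a smooth closed embedding whose normal bundle identifies with $\gamma_{d-1}^{\bot}$ by differentiating the construction above, and then invoke the tubular neighborhood theorem together with a deformation retraction of the complement of an open tube onto $\Gr_d(\R^{m-1})$ obtained by shrinking the $e_m$-component; but the direct construction gives both facts at once.
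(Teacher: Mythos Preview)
Your proof is correct and essentially identical to the paper's: both define $f(W,v)=W+\R(v+\sqrt{1-\|v\|^2}\,e_m)$, check the zero-section gives $j$, the sphere bundle lands in $\Gr_d(\R^{m-1})$, and construct the same inverse on the open part via $W=V\cap\R^{m-1}$ and the unique unit vector with positive last coordinate. You are slightly more explicit than the paper in invoking the compact-to-Hausdorff quotient argument to conclude the pushout, which the paper leaves implicit.
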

\begin{proof}  Recall that $D(\gamma_{d-1}^{\bot}) = \{(V,v) \ | \ V \in \Gr_{d-1}(\R^{m-1}), v \in V^{\bot}, |v| \leq 1 \}$, and $S(\gamma_{d-1}^{\bot}) = \{(V,v) \ | \ V \in \Gr_{d-1}(\R^{m-1}), v \in V^{\bot}, |v| = 1 \}$.

We define $f: D(\gamma_{d-1}^{\bot}) \ra \Gr_d(\R^{m})$ by the formula
$$f(V,v) = V + \langle v+ \sqrt{1-|v|^2}e_{m}\rangle,$$ where $e_m$ is the $m$th standard basis vector in $\R^m$.  We claim this $f$ has the needed properties.

Firstly, note that $f(V,\mathbf 0) = V + \langle e_m\rangle = V \oplus \R = j(V)$.

Secondly, $(V,v) \in S(\gamma_{d-1}^{\bot}) \Leftrightarrow f(V,v) = V + \langle v \rangle$, and so is an element of $\Gr_d(\R^{m-1})$.  Furthermore,
$f: S(\gamma_{d-1}^{\bot}) \ra  \Gr_d(\R^{m-1})$ is surjective: given any $W \in \Gr_d(\R^{m-1})$, if we choose any $(d-1)$-dimensional subspace $V$ of $W$, and a unit length vector $v \in W$ in the 1--dimensional orthogonal complement, then $f(V,v) = W$.

Finally, we need to check that $f$ is bijective on $\overset{\circ}{D}(\gamma_{d-1}^{\bot}) = D(\gamma_{d-1}^{\bot}) - S(\gamma_{d-1}^{\bot})$.  To check this,
let \(W \in
  \Gr_{d}(\R^{m})\) be a $d$--dimensional subspace of $\R^m$ not contained in $\R^{m-1}$, so that $V = W \cap \R^{m-1} \in \Gr_{d-1}(\R^{m-1})$.
Let \(V^\perp\) be the complement of \(V\) in of
    \(\R^{m}\) so that \(W \cap V^\perp\) is one-dimensional, and let $v$ be
    the unique unit vector \(v \in W \cap V^\perp\) such that \(v\) has
    positive \(m\)th coordinate. Let \(\pi \colon \R^m \to
    \R^{m-1}\) be the standard projection.  We
    claim that \(f(V,\pi(v))=W\) and $(V,\pi(v))$ is the unique point in $ \overset{\circ}{D}(\gamma_{d-1}^{\bot})$ with this property: since \(|v|=1\) the
    $m$th component of \(v\) is $\sqrt{1-|\pi(v)|^2}$, thus $v=\pi(v) + \sqrt{1-|\pi(v)|^2}e_m$ and so \(f(V,\pi(v))=V + \langle v
    \rangle = W\).
\end{proof}

Let $u_{\gamma_{d-1}^{\bot}}\in \widetilde H^{m-d}(Th(\gamma_{d-1}^{\bot}))$ be the Thom class of $\gamma_{d-1}^{\bot} \ra \Gr_{d-1}(\R^{m-1})$.   Then $\widetilde H^{m-d}(Th(\gamma_{*}^{\bot}))$ is a free rank 1 $H^*(\Gr_{d-1}(\R^{m-1}))$--module  on $u_{\gamma_{d-1}^{\bot}}$.  Meanwhile, $H^*(\Gr_{d}(\R^{m}))$ is a $H^*(\Gr_{d-1}(\R^{m-1}))$--module via $j^*$, and the ideal $(\bar w_{m-d}) = \widetilde H^*(C_d(\R^m))$ is a submodule.  The proposition thus implies the following.

\begin{cor} $f^*: \widetilde H^*(C_d(\R^m)) \xra{\sim} \widetilde H^{*}(Th(\gamma_{d-1}^{\bot}))$
is an isomorphism of free rank 1 $H^*(\Gr_{d-1}(\R^{m-1}))$--modules, and $f^*(\bar w_{m-d}) = u_{\gamma_{d-1}^{\bot}}$.
\end{cor}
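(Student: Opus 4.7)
The corollary is a largely formal consequence of \propref{cofib = thom space prop}. Since $f$ is a homeomorphism, $f^*$ is immediately an isomorphism of graded $\Z/2$-vector spaces, so the substantive assertions are (i) compatibility with the $H^*(\Gr_{d-1}(\R^{m-1}))$-module structures on each side, and (ii) the identification $f^*(\bar w_{m-d}) = u_{\gamma_{d-1}^{\bot}}$.

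First I would dispose of (ii), which is the easier point. The group $\widetilde H^{m-d}(Th(\gamma_{d-1}^{\bot}))$ is one-dimensional over $\Z/2$, spanned by the Thom class $u_{\gamma_{d-1}^{\bot}}$. On the other side, $p^*: \widetilde H^*(C_d(\R^{m})) \to (\bar w_{m-d}) \subset H^*(\Gr_d(\R^m))$ is an isomorphism (since $\ker i^* = (\bar w_{m-d})$ by \lemref{w and bar w lemma}(c)), and $\bar w_{m-d}$ is by construction nonzero there. Composing, $f^*(\bar w_{m-d})$ is a nonzero element of a one-dimensional space, so it must equal $u_{\gamma_{d-1}^{\bot}}$.

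For (i), I would first spell out the two module structures. The Thom-side structure is the classical one coming from the retraction $q \colon D(\gamma_{d-1}^{\bot}) \to \Gr_{d-1}(\R^{m-1})$, i.e.\ $x \cdot u = q^*(x) \cup u$. On the cofiber side, the action of $x \in H^*(\Gr_{d-1}(\R^{m-1}))$ on $y \in (\bar w_{m-d})$ is defined by lifting $x$ along the surjection $j^*$ to some $\tilde x \in H^*(\Gr_d(\R^m))$ and multiplying inside $H^*(\Gr_d(\R^m))$; this is independent of the chosen lift because $\ker j^* = (w_d)$ by \lemref{w and bar w lemma}(d), while $w_d \cdot \bar w_{m-d} = 0$ by \lemref{w and bar w lemma}(b), so $(w_d)$ annihilates $(\bar w_{m-d})$. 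To see $f^*$ intertwines these two actions, I would invoke naturality of cup product for the map of pairs $(D(\gamma_{d-1}^{\bot}), S(\gamma_{d-1}^{\bot})) \to (\Gr_d(\R^m), \Gr_d(\R^{m-1}))$ coming from the pushout in \propref{cofib = thom space prop}. The key input is the identity $j = f \circ (\text{0-section})$, which forces the pullback along the disk map to coincide with $j^*$ under the retraction $q$. Naturality then delivers $f^*(\tilde x \cdot \bar w_{m-d}) = j^*(\tilde x) \cdot f^*(\bar w_{m-d})$, which is exactly the required compatibility, and combined with (ii) shows that $f^*$ sends the free generator to the free generator.

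The only real obstacle is the bookkeeping in step (i): verifying that the $H^*(\Gr_{d-1}(\R^{m-1}))$-action on $(\bar w_{m-d})$ is well-defined, which is precisely where both \lemref{w and bar w lemma}(b) and \lemref{w and bar w lemma}(d) are essential. Everything else is an immediate unpacking of the pushout structure provided by \propref{cofib = thom space prop}.
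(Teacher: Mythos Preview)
Your proposal is correct and follows the same approach the paper intends. In the paper, this corollary is stated with no explicit proof at all: the authors set up the two module structures in the paragraph preceding the corollary and then write ``The proposition thus implies the following.'' You have simply filled in the details they suppress --- the one-dimensionality argument for (ii), and the well-definedness of the $H^*(\Gr_{d-1}(\R^{m-1}))$-action on $(\bar w_{m-d})$ via \lemref{w and bar w lemma}(b),(d) together with naturality of the relative cup product for (i) --- which is exactly what a careful reader would need to supply.
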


\subsection{The characteristic class associated to $Q_n$ and a twisted $Q_n$--module.} \label{twisted subsection}

Let $\alpha_n \in H^{2^{n+1}-1}(Gr_{d-1}(\R^{m-1}))$ be defined as the element satisfying $Q_n(\bar w_{m-d}) = \alpha_n \bar w_{m-d} \in  \widetilde H^*(C_d(\R^m))$.  Then define
$$ \widehat Q_n: H^{*}(Gr_{d-1}(\R^{m-1}))\ra H^{*+2^{n+1}-1}(Gr_{d-1}(\R^{m-1}))$$
by the formula
$$ \widehat Q_n(x) = Q_n(x) + x\alpha_n.$$

\begin{prop} \label{twisted prop} $\widehat Q_n^2 = 0$, and the chain complex $(H^*(\Gr_{d-1}(\R^{m-1})),\widehat Q_n)$ is isomorphic to the chain complex $(\widetilde H^{*+m-d}(C_d(\R^m)), Q_n)$.
\end{prop}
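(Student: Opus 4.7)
The plan is to use the previous corollary, which identifies $\widetilde H^{*+m-d}(C_d(\R^m))$ with $H^*(\Gr_{d-1}(\R^{m-1}))$ as a free rank one module via the Thom class. Explicitly, since $\widetilde H^*(C_d(\R^m))$ equals the ideal $(\bar w_{m-d})\subset H^*(\Gr_d(\R^m))$ and this is a rank one free module over $H^*(\Gr_{d-1}(\R^{m-1}))$ (acting through $j^*$), the map
$$\phi: H^*(\Gr_{d-1}(\R^{m-1})) \lra \widetilde H^{*+m-d}(C_d(\R^m)), \qquad \phi(x) = j^*(x)\cdot \bar w_{m-d},$$
is an isomorphism of graded abelian groups. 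As a consequence, there is a \emph{unique} class $\alpha_n\in H^{2^{n+1}-1}(\Gr_{d-1}(\R^{m-1}))$ with $j^*(\alpha_n)\,\bar w_{m-d}=Q_n(\bar w_{m-d})$, so the defining equation for $\alpha_n$ makes sense and $\widehat Q_n$ is well defined.

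Next I would compute $Q_n\circ\phi$ using the fact that $Q_n$ is a primitive in the mod $2$ Steenrod algebra and therefore a derivation with respect to the cup product on $H^*(\Gr_d(\R^m);\Z/2)$. Together with naturality of $Q_n$ under $j^*$, this gives
$$Q_n(\phi(x)) = Q_n\bigl(j^*(x)\,\bar w_{m-d}\bigr) = j^*(Q_n(x))\,\bar w_{m-d} + j^*(x)\,Q_n(\bar w_{m-d}).$$
Substituting the defining relation $Q_n(\bar w_{m-d}) = j^*(\alpha_n)\,\bar w_{m-d}$ and factoring $\bar w_{m-d}$ out yields
$$Q_n(\phi(x)) = j^*\bigl(Q_n(x) + x\,\alpha_n\bigr)\,\bar w_{m-d} = \phi(\widehat Q_n(x)).$$
Thus $\phi$ intertwines $\widehat Q_n$ with $Q_n$, giving the claimed isomorphism of chain complexes. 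Finally, $Q_n^2=0$ on $\widetilde H^*(C_d(\R^m))$ transports through the isomorphism $\phi$ to $\widehat Q_n^{\,2}=0$, so the first assertion is automatic once the intertwining is established.

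There is no real obstacle here; the only mild subtlety is bookkeeping about where elements live. One must remember that products like $x\,\bar w_{m-d}$ are implicitly taken in $H^*(\Gr_d(\R^m))$ via the action through $j^*$, and that the identification $\widetilde H^*(C_d(\R^m))\cong(\bar w_{m-d})$ is via $p^*$. Once these identifications are made consistently, the two ingredients used --- the Thom isomorphism furnished by the preceding corollary, and the Cartan/derivation property of the Milnor primitive $Q_n$ --- combine in a single short calculation to deliver both statements simultaneously.
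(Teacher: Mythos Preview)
Your proof is correct and follows essentially the same approach as the paper: define the isomorphism $\phi(x)=x\bar w_{m-d}$ (the paper calls it $\Theta$), use the derivation property of $Q_n$ together with the defining relation $Q_n(\bar w_{m-d})=\alpha_n\bar w_{m-d}$ to verify the intertwining identity, and then read off $\widehat Q_n^{\,2}=0$ from $Q_n^2=0$. The only cosmetic difference is that you compute $Q_n\circ\phi$ while the paper computes $\Theta\circ\widehat Q_n$, and you are slightly more explicit about the role of $j^*$.
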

\begin{proof} Let $\Theta: H^*(Gr_{d-1}(\R^{m-1})) \ra \widetilde H^{*+m-d}(C_d(\R^m))$ be the isomorphism established in the last subsection: $\Theta(x) = x\bar w_{m-d}$.  The proposition follows once we check that $\Theta(\widehat Q_n(x)) = Q_n(\Theta(x))$. We compute:
\begin{equation*}
\begin{split}
\Theta(\widehat Q_n(x))
& = \hat Q_n(x)\bar w_{m-d} \\
& = (Q_n(x)+ x\alpha_n)\bar w_{m-d} \\
& = Q_n(x)\bar w_{m-d} + x(\alpha_n \bar w_{m-d}) \\
& = Q_n(x)\bar w_{m-d} + xQ_n(\bar w_{m-d}) \\
& = Q_n(x\bar w_{m-d}) =  Q_n(\Theta(x)).
\end{split}
\end{equation*}
\end{proof}

It is useful to put the class $\alpha_n$ in context.  Given any element $a$ in the Steenrod algebra $\A$, one gets a characteristic class $w_a(\xi) \in H^{|a|}(B;\Z/2)$ associated to any real vector bundle $\xi \ra B$: $w_a(\xi)$ is defined as the element satisfying
$a(u_{\xi}) = w_a(\xi)u_{\xi} \in \widetilde H^{\dim \xi + |a|}(Th(\xi);\Z/2)$, where $u_\xi$ is the Thom class of $\xi$.   So, for example, $w_{Sq^n}(\xi) = w_n(\xi)$, and, relevant for us, our class $\alpha_n$ equals $w_{Q_n}(\xi)$ when $\xi = \gamma_{d-1}^{\bot} \ra \Gr_{d-1}(\R^{m-1})$.

We have the following characterization of $w_{Q_n}$.

\begin{prop} \label{Qn char class prop}  $w_{Q_n}$ is the unique characteristic class satisfying the following two properties:

{\bf (a)} \ $w_{Q_n}(\xi \oplus \nu)) = w_{Q_n}(\xi) + w_{Q_n}(\nu)$. \\

{\bf (b)} \ If $\gamma \ra B$ is one dimensional, then $w_{Q_n}(\gamma) = w_1(\gamma)^{2^{n+1}-1}$.
\end{prop}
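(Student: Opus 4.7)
The plan is to verify that $w_{Q_n}$ satisfies (a) and (b), and then invoke the splitting principle for uniqueness.

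\textbf{Additivity (a).} The key input is that $Q_n$ is a primitive element of the mod 2 Steenrod algebra, so by the Cartan-type formula for primitives it acts as a derivation: $Q_n(xy) = Q_n(x)y + xQ_n(y)$. Given $\xi \to B$ and $\nu \to B'$, the external Thom class of $\xi \times \nu$ is $u_\xi \cdot u_\nu$. Applying the derivation property,
\begin{equation*}
Q_n(u_\xi u_\nu) = Q_n(u_\xi)u_\nu + u_\xi Q_n(u_\nu) = (w_{Q_n}(\xi) + w_{Q_n}(\nu))\,u_\xi u_\nu,
\end{equation*}
and pulling back along the diagonal $B \to B \times B$ when $B = B'$ gives (a). (One should also note that $Q_n$ commutes with suspension and Thom isomorphisms, which follows from $Q_n$ being stable.)

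\textbf{Formula on line bundles (b).} By naturality it suffices to handle the universal case: the tautological line bundle $\gamma$ over $\R P^\infty$, where $H^*(\R P^\infty;\Z/2) = \Z/2[w_1]$ with $w_1 = w_1(\gamma)$, and under the Thom isomorphism $u_\gamma \leftrightarrow 1$, so that $Q_n(u_\gamma) = Q_n(w_1)\cdot$ (appropriate shift). I would prove by induction on $n$ that
\begin{equation*}
Q_n(w_1) = w_1^{2^{n+1}}.
\end{equation*}
The base case $n=0$ is $Sq^1(w_1) = w_1^2$. For the inductive step, use $Q_n = [Q_{n-1}, Sq^{2^n}]$ together with $Sq^i(w_1) = 0$ for $i \geq 2$:
\begin{equation*}
Q_n(w_1) = Q_{n-1}(Sq^{2^n}w_1) + Sq^{2^n}(Q_{n-1}w_1) = 0 + Sq^{2^n}(w_1^{2^n}) = w_1^{2^{n+1}},
\end{equation*}
the last equality by the unstable relation $Sq^{|x|}x = x^2$. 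Translating back across the Thom isomorphism yields $Q_n(u_\gamma) = w_1^{2^{n+1}-1} u_\gamma$, which is (b).

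\textbf{Uniqueness.} Suppose $c$ is any characteristic class satisfying (a) and (b). By (a) and (b), $c$ is determined on bundles that split as sums of line bundles: $c(\gamma_1 \oplus \cdots \oplus \gamma_d) = \sum_i w_1(\gamma_i)^{2^{n+1}-1}$. For a general real vector bundle $\xi \to B$, the splitting principle produces an auxiliary space $\tilde B \to B$ (the iterated flag bundle) such that the pullback map $H^*(B;\Z/2) \to H^*(\tilde B;\Z/2)$ is injective and the pulled-back bundle splits as a sum of line bundles. Since both $c$ and $w_{Q_n}$ satisfy (a) and (b), their pullbacks agree on $\tilde B$, hence they agree on $B$.

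\textbf{Expected obstacle.} The only nonformal step is the computation of $Q_n(w_1)$, which is short once one is comfortable with the inductive definition of the Milnor primitives and the unstable relation $Sq^{|x|}x = x^2$; everything else is a formal consequence of $Q_n$ being a stable primitive operation plus the standard splitting principle.
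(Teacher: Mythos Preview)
Your proof is correct and follows essentially the same approach as the paper: additivity from the primitivity/derivation property of $Q_n$, the line-bundle formula via the inductive computation $Q_n(t)=t^{2^{n+1}}$ in $H^*(\R P^\infty;\Z/2)$ using $Q_n=[Q_{n-1},Sq^{2^n}]$, and uniqueness via the splitting principle. The paper is terser (it simply notes $u_\gamma = t$ for the universal line bundle rather than phrasing things in terms of the Thom isomorphism), but the content is the same.
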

\begin{proof} Property (a) follows from the fact that $Q_n$ is primitive in $\A$ (or, equivalently, that $Q_n$ acts a derivation).  To see Property (b), one first calculates that $Q_n(t) = t^{2^n+1} \in \Z/2[t] = H^*(\R P^{\infty};\Z/2)$, recalling that $Q_0 = Sq^1$, and $Q_n = Sq^{2^n}Q_{n-1} + Q_{n-1}Sq^{2^n}$.  Then Property (b) follows, since if $\gamma$ is the universal line bundle over $\R P^{\infty}$, then $u_{\gamma} = t$.  Uniqueness follows from the splitting principle.
\end{proof}

\begin{rem}  Thus $w_{Q_n}(\xi)$ agrees with the `s-class' $s_{2^{n+1}-1}(\xi)$, analogous to the class of the same name for complex vector bundles as defined in \cite[\S 16]{milnor stasheff}.  (These $s_I$'s are {\em not} the same as the $s_{\lambda}$ of the next subsection: these are two conflicting and standard usages.)
\end{rem}

Since $\gamma_{d-1}^{\bot} \oplus \gamma_{d-1}$ is trivial, property (b) has the following consequence.

\begin{cor} \label{Qn char class cor} $\alpha_n = w_{Q_n}(\gamma_{d-1}) \in H^{2^{n+1}-1}(\Gr_{d-1}(\R^{m-1}); \Z/2).$
\end{cor}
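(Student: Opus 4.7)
The plan is to combine the identification of $\bar w_{m-d}$ as a Thom class with the additivity property of $w_{Q_n}$ proved in \propref{Qn char class prop}.

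First, I would recall from the corollary at the end of \secref{cofib subsection} that the isomorphism $f^*: \widetilde H^*(C_d(\R^m)) \xra{\sim} \widetilde H^*(Th(\gamma_{d-1}^{\bot}))$ carries $\bar w_{m-d}$ to the Thom class $u_{\gamma_{d-1}^{\bot}}$, and is $H^*(\Gr_{d-1}(\R^{m-1}))$--linear. Since $f^*$ commutes with the Steenrod operation $Q_n$, the defining equation $Q_n(\bar w_{m-d}) = \alpha_n \bar w_{m-d}$ transports to
\begin{equation*}
Q_n(u_{\gamma_{d-1}^{\bot}}) = \alpha_n \cdot u_{\gamma_{d-1}^{\bot}}
\end{equation*}
in $\widetilde H^*(Th(\gamma_{d-1}^{\bot}))$. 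Comparing this with the defining equation $a(u_\xi) = w_a(\xi)u_\xi$ for the characteristic class $w_a$ (with $a = Q_n$ and $\xi = \gamma_{d-1}^{\bot}$), we immediately read off
\begin{equation*}
\alpha_n = w_{Q_n}(\gamma_{d-1}^{\bot}).
\end{equation*}

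The remaining task is to replace $\gamma_{d-1}^{\bot}$ by $\gamma_{d-1}$. By construction, the Whitney sum $\gamma_{d-1} \oplus \gamma_{d-1}^{\bot}$ is the trivial $(m-1)$--plane bundle over $\Gr_{d-1}(\R^{m-1})$. Property (b) of \propref{Qn char class prop} applied to trivial line bundles (whose first Stiefel--Whitney class vanishes) together with additivity (property (a)) shows that $w_{Q_n}$ is zero on any trivial bundle. Thus additivity gives
\begin{equation*}
w_{Q_n}(\gamma_{d-1}) + w_{Q_n}(\gamma_{d-1}^{\bot}) = w_{Q_n}(\gamma_{d-1}\oplus \gamma_{d-1}^{\bot}) = 0,
\end{equation*}
and since we are working mod $2$, this yields $w_{Q_n}(\gamma_{d-1}^{\bot}) = w_{Q_n}(\gamma_{d-1})$, completing the identification $\alpha_n = w_{Q_n}(\gamma_{d-1})$.

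There is no serious obstacle here: both ingredients (the Thom-class identification and the additivity of $w_{Q_n}$) have already been established, so the corollary is essentially a two-line consequence of \propref{Qn char class prop} and the description of $f^*$. The only small point to be careful about is verifying that $f^*$ intertwines $Q_n$ (which is automatic since $f^*$ is induced by a continuous map and $Q_n$ is a stable cohomology operation) and that trivial bundles have vanishing $w_{Q_n}$, both of which are immediate.
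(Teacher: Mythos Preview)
Your proof is correct and follows exactly the route the paper takes: the paper has already observed (just before \propref{Qn char class prop}) that $\alpha_n = w_{Q_n}(\gamma_{d-1}^{\bot})$ via the Thom-class identification, and then deduces the corollary in one line from the triviality of $\gamma_{d-1}^{\bot}\oplus\gamma_{d-1}$ together with additivity. You have simply spelled out these two steps in more detail.
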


\subsection{The description of $H^*(\Gr_d(\R^m);\Z/2)$ via Schubert cells, and Lenart's formula.}    \label{lenart formula sec}
\mbox{}
\vspace{.1in}

For the purposes of proving \thmref{even m properties thm}, we use an alternative description of $H^*(\Gr_d(\R^m); \Z/2)$.

We recall the cell structure of \(\Gr_d(\R^{d+c})\)
as described in \cite[\S6]{milnor stasheff}. A
\emph{Schubert symbol} \(\lambda = (\lambda_1,\ldots,\lambda_d)\) of
\(\Gr_d(\R^{m})\) is a sequence of integers
  \[ m-d \ge \lambda_1 \ge \lambda_2 \ge \ldots \ge \lambda_d > 0.\] The \emph{weight}
  of \(\lambda\) is defined to be
  \(\sum \lambda_i\) and is denoted \(|\lambda|\).
Such a $\lambda$ is a partition contained inside of a \(d \times (m-d)\)
grid when depicted as Young diagrams:
diagrams with \(\lambda_i\) boxes in the
\(i\)th row.

To each such partition is associated a Schubert cell \(e(\lambda)\) of dimension $\lambda$ in \(\Gr_d(\R^{m})\) defined by
 \[e(\lambda) = \{V \in \Gr_d(\R^{m}) \mid \dim(V \cap
   \R^{i+\lambda_{d+1-i}})\ge i \text{ for } 1 \le i \le m\}.\]
This cell decomposition of the Grassmanian leads to the dual Schubert cell basis for $H^{*}(\Gr_d(\R^{m});\Z/{2})$ with basis elements
$s_{\lambda} \in H^{|\lambda|}(\Gr_d(\R^{m});\Z/{2})$.

With this notation, one has that $w_i = s_{(1^i)}$ and $\bar w_j = s_(j)$. Though we don't use this here, it is worth noting that the cohomology ring structure in this basis is described by the Littlewood-Richardson rule of symmetric function theory.

To state Lenart's formula for calculating
\(Q_n\) on a Schubert basis element \cite{lenart}, we need some combinatorial
definitions. Given a Young diagram \(\lambda\) that includes into
another Young diagram \(\mu\), one can form the complement
\(\mu / \lambda\). For example,
\begin{figure}[h]
  \centering
\ytableausetup{centertableaux,boxsize=1em}
\[
   \lambda = \begin{ytableau}
    \phantom{0} & &\\
    \phantom{0} \\
  \end{ytableau} \qquad
  \mu = \begin{ytableau}
    \phantom{0} & & &\\
    \phantom{0} & & \\
    \phantom{0}
  \end{ytableau} \qquad
  \mu / \lambda = \begin{ytableau}
    \none & \none & \none &\\
    \none & & \\
    \phantom{0}
  \end{ytableau}.
\]
 \end{figure}

The \emph{content} of a box \(b\) of \(\mu\) in row \(i\) and column
\(j\) is defined to be \(\content(b)=j-i\). For a box \(b\) in the
skew shape \(\mu / \lambda\), we define its content to be the content
of \(b\) embedded in \(\mu\). Here we fill in the contents of the
diagrams from above
  \begin{figure}[h]
  \centering
\ytableausetup{centertableaux,boxsize=1.5em}
\[
   \lambda = \begin{ytableau}
    0 & 1 & 2\\
    -1 \\
  \end{ytableau} \qquad
  \mu = \begin{ytableau}
    0 & 1 & 2 & 3\\
    -1 & 0 & 1 \\
    -2
  \end{ytableau} \qquad
  \mu / \lambda = \begin{ytableau}
    \none & \none & \none & 3\\
    \none & 0 &  1\\
    -2
  \end{ytableau}
\]
 \end{figure}

A skew-shape is said to be \emph{connected} when each pair of boxes in
the diagram is connected by a sequence of boxes that each share an
edge. A shape \(\lambda\) is called a \emph{border strip}, if it is
connected and does not contain a \(2 \times 2\) block of boxes. A
shape satisfying just the second condition is called a \emph{broken
  border} strip, and in particular, a border strip is an example of a
broken border strip with just one connected component. If \(\lambda\)
is a broken border strip, then we denote by
\(\connectedcomponents(\lambda)\) the number of connected components
of \(\lambda\). If \(\lambda\) is not a broken border strip, then we
define \(\connectedcomponents(\lambda) = \infty\). For example, in the
next diagram, \(\lambda_1\) is a border strip, \(\lambda_2\) is a
broken border strip that is not a border strip, and \(\lambda_3\) is
an example of a shape that is neither.
\begin{figure}[h]
  \centering
\ytableausetup{centertableaux,boxsize=1em}
\[\lambda_1 = \begin{ytableau}
    \none & \none & \none &  & &\\
    \none & \none & \none &\\
    \none & \none & \none &\\
    \none &  & & \\
  \end{ytableau}
  \qquad
  \lambda_2 =\begin{ytableau}
    \none & \none & \none &  & &\\
    \none & \none & \none & \none \\
    \none & \none & \none &  \\
    \none &  & & \\
  \end{ytableau} \qquad
   \lambda_3 = \begin{ytableau}
    \none & \none & \none &  & &\\
    \none & \none & \none & & \\
    \none & \none & \none &  \\
    \none &  & & \\
  \end{ytableau}
\]
\end{figure}

A \emph{sharp corner} of a broken border strip is a box with no north,
no west and no northwest neighbors. A \emph{dull corner} is a box with
both north and west neighbors, but no northwest neighbor. Let
\(\corners(\mu/\lambda)\) denote the set of sharp and dull corners of
\(\mu/\lambda\). For example, in the following diagram the sharp
corners have been labeled \(S\) and the dull corners have been labeled
\(D\).
\begin{figure}[h]
  \centering
\ytableausetup{centertableaux,boxsize=1em}
\[\begin{ytableau}
    \none & \none & \none & S & &\\
    \none & \none & \none &\\
    \none & \none & \none &\\
    \none & S & & D\\
\end{ytableau} \]
 \end{figure}

We are now ready to state Lenart's formula from \cite{lenart}:
  \begin{align}
  \label{formula:Dk}
    Q_{n} (s_\lambda) &= \sum_{\substack{\mu \supset \lambda \colon |\mu|-|\lambda| = {2^{n+1}-1}\\ cc(\mu / \lambda )\le 2}} d_{\lambda\mu} s_\mu,
\end{align}
where \(\mu/\lambda\) must be a broken border strip and
\begin{align}
\label{formula:lenart coefficient}
  d_{\lambda \mu} &=
    \begin{cases}\displaystyle
      \sum_{b \in C(\mu/\lambda)} c(b) & \mu/\lambda \text{ is connected}\\
      1 & \mu/\lambda \text{ is disconnected.}
    \end{cases}
\end{align}
\begin{ex}
   \ytableausetup{centertableaux,boxsize=.5em} As an example we compute
  \(Q_1\) on \(w_1 = s_{\ydiagram{1}}\) in the Schubert basis in
  \(\Gr_2(\R^6)\). There are three basis elements in degree four
  \[\mu_1 = \ydiagram{4} \qquad \mu_2 = \ydiagram{3,1} \qquad \mu_3 = \ydiagram{2,2}.\]
  To compute \(Q_n(s_{\ydiagram{1}})\) using (\ref{formula:Dk}) we
  must consider each complement. Let \(\lambda = \ydiagram{1}\). For
  \(\mu_1\) we have \ytableausetup{centertableaux,boxsize=1.5em}
  \[\mu_1 / \lambda =\ytableaushort{0123} \ / \
    \ydiagram{1}=\ytableaushort{123}.\] The complement is a border
  strip and there is just one sharp corner (the left most corner) and
  no dull corners. The content of the sharp corner is \(1\) modulo
  two, hence \(d_{\lambda \mu_1}=1\), and so \(s_{\mu_1}\) is in the
  expansion of \(Q_1(s_{\lambda})\). Next we consider
  \[\mu_2 / \lambda = \ytableaushort{012,{-1}} \ / \ \ydiagram{1} =
    \ytableaushort{\none 12,{-1}}.\] This is a disconnected broken
  border strip, hence \(d_{\lambda \mu_2}=1\), and so \(s_{\mu_2}\) is
  in the expansion. Finally,
  \[\mu_3 / \lambda = \ytableaushort{01,{-1}{-2}} \ / \ \ydiagram{1} =
    \ytableaushort{\none 1,{-1}{-2}}.\] There are two sharp corners,
  one of content \(-1\) and the other of content \(1\). There is also
  one dull corner of content \(-2\). This means
  \(d_{\lambda \mu_3} = (-1) + 1 + 2 \equiv 0\), and so \(s_{\mu_3}\)
  is not in the expansion. Hence,
  \[\ytableausetup{centertableaux,boxsize=.5em}
    Q_1\left(s_{\ydiagram{1}}\right) =  s_{\ydiagram{4}} + s_{\ydiagram{3,1}}.
  \]
\end{ex}

\section{Results about $H^*(\Gr_d(\R^m);Q_n)$ when $m$ is even}
\label{m even section}

\begin{proof}[Proof of \thmref{even m properties thm}(a)]
  We are going to show that \(Q_n (s_\lambda) = 0\) for each Schubert
  basis element \(s_\lambda\) in degree \(d(m-d)-2^{n+1}+1\).  Since
  \(s_{(d^{(m-d)})}\) is the only class in degree \(d(m-d)\),
  \(Q_n(s_\lambda) = d_{\lambda {{(d^{(m-d)})}}} {s_{(d^{(m-d)})}}\),
  where \(d_{\lambda {{(d^{(m-d)})}}}\) is given by
  (\ref{formula:lenart coefficient}). We must only consider
  \(\lambda\) such that \({{(d^{(m-d)})}}/\lambda\) is a broken border
  strip. As \({{(d^{(m-d)})}}\) is a \(d\times (m-d)\) grid the
  complement \({(d^{(m-d)})}/\lambda\) is always connected and so if
  \({{(d^{(m-d)})}} /\lambda\) is a broken border strip it must be, in
  particular, a border strip. If \({{(d^{(m-d)})}} /\lambda\) is a
  border strip, then it must be one of three types: \begin{enumerate}
 \item \({{(d^{(m-d)})}}/\lambda\) is the last row of \({(d^{(m-d)})}\),
 \item \({{(d^{(m-d)})}}/\lambda\) is the last column of \({(d^{(m-d)})}\),
 \item \({{(d^{(m-d)})}}/\lambda\) is the union of the last row and last column of \({(d^{(m-d)})}\).
 \end{enumerate}
 We will show that \(d_{\lambda{{(d^{(m-d)})}}}= 0\) in each of these cases. As
 \(m\) was assumed to be even, the content of the right most bottom
 box of \({{(d^{(m-d)})}}\) is also even.

\begin{enumerate}
\item For the first case, there is just one sharp corner, namely the
  left most box, and there are no dull corners. Since the strip is of
  odd length, namely, \(2^{n+1}-1\), the left most box and the right
  most box have the same content modulo two. Hence, the content of this
  sharp corner is zero modulo two, and so \(d_{\lambda{{(d^{(m-d)})}}}=0\).
\item For the second case, the argument is exactly the same, but with
  the sharp corner on the top.
\item For the third case, the content of the sharp corner on the
  bottom left and the content of the sharp corner on the
  top right agree modulo two, because the border strip is of odd
  length. There is one dull corner in the bottom right and it is zero
  modulo two. Thus, the two sharp corners cancel and the dull corner
  contributes nothing.
\end{enumerate}
Thus, in all cases \(Q_n(s_\lambda) = 0\) for
\(s_\lambda\) in degree \(m(m-d)-2^{n+1}+1\). This completes the proof
that the top class is not in the image of \(Q_n\) for even \(m\).
\end{proof}

\begin{proof}[Proof of \thmref{even m properties thm}(b)]

We wish to prove that, when $m$ is even, then the chain complexes $(\widetilde H^*(C_d(\R^{m}));Q_n)$ and $(H^{d(m-d)-*}(Gr_{d-1}(\R^{m-1}));Q_n)$ are dual.

By \propref{twisted prop}, the $(\widetilde H^{*+m-d}(C_d(\R^{m}));Q_n)$ is isomorphic to the chain complex $( H^*(\Gr_{d-1}(\R^{m-1}));\widehat Q_n)$, where we recall that $\widehat Q_n(y) = Q_n(y) + y \alpha_n$, and that $\alpha_n \bar w_{m-d} = Q_n(\bar w_{m-d}) \in H^*(\Gr_d(\R^m);\Z/2)$.

So we need to check that the chain complexes $H^*(\Gr_{d-1}(\R^{m-1}));\widehat Q_n)$ and $(H^{(d-1)(m-d)-*}(Gr_{d-1}(\R^{m-1}));Q_n)$ are dual.  This means we need to show that, whenever $x,y \in H^*(Gr_{d-1}(\R^{m-1});\Z/2)$ satisfy $|x| + |y| + |Q_n| = (d-1)(m-1)$, then
$$ Q_n(x) y  = x \widehat Q_n(y).$$

By \thmref{even m properties thm}(a), we know that $Q_n(xy\bar w_{m-d}) = 0 \in H^{d(m-d)}(\Gr_d(\R^m);\Z/2)$.  Thus, in $H^{d(m-d)}(\Gr_d(\R^m);\Z/2)$, we have
\begin{equation*}
\begin{split}
0 & = Q_n(xy\bar w_{m-d}) \\
& = Q_n(x)y\bar w_{m-d} + x Q_n(y) \bar w_{m-d} + xy Q_n(\bar w_{m-d}) \\
& = Q_n(x)y\bar w_{m-d} + x Q_n(y) \bar w_{m-d} + xy \alpha_n \bar w_{m-d} \\
& = (Q_n(x)y + x Q_n(y) + xy \alpha_n) \bar w_{m-d} \\
& = (Q_n(x)y + x \widehat Q_n(y)) \bar w_{m-d},
\end{split}
\end{equation*}
and we conclude that $0 =  Q_n(x) y + x \widehat Q_n(y) \in H^{(d-1)(m-d)}(Gr_{d-1}(\R^{m-1});\Z/2)$.
\end{proof}

\begin{proof}[Proof of \thmref{even m properties thm}(c)]
Recall that $k_{Q_n}(X)$ denotes the rank of the $Q_n$--homology $H^*(X;Q_n)$.  Similarly, let $\bar k_{Q_n}(X)$ denote the rank of $\widetilde H^*(X;Q_n)$.

Let $m = 2^{n+1} - \epsilon + 2l$ with $\epsilon =$ 0 or 1, and $l \geq 0$.  Let
$$ k^G_n(d,m) = \sum_{i=0}^{\left\lfloor d/2 \right\rfloor} \binom{2^{n+1}-\epsilon}{d-2i}\binom{l}{i}.$$

We start with the first part of \thmref{even m properties thm}(c). This asserts that, when $m$ is even,  if we assume that $$k_{Q_n}(\Gr_{d}(\R^{m-1})) = k^G_n(d,m-1) \text{ and } k_{Q_n}(\Gr_{d-1}(\R^{m-1})) = k^G_n(d-1,m-1),$$ then we can conclude that $k_{Q_n}(\Gr_{d}(\R^{m})) = k^G_n(d,m)$.

\thmref{lower bound thm} tells us that $k_{Q_n}(\Gr_{d}(\R^{m})) \geq k^G_n(d,m)$.

Since we have a short exact sequence
$$ 0 \ra \widetilde H^*(C_d(\R^m)) \ra H^*(\Gr_d(\R^m))) \ra H^*(\Gr_d(\R^{m-1})) \ra 0,$$ we see that
$$k_{Q_n}(\Gr_{d}(\R^{m-1})) + \bar k_{Q_n}(C_d(\R^m)) \geq k_{Q_n}(\Gr_{d}(\R^{m})),$$
with equality if and only if the associated long exact $Q_n$--homology sequence is still short exact.

Since $m$ is even, \thmref{even m properties thm}(b) applies, and tells us that
$\bar k_{Q_n}(C_d(\R^m)) = k_{Q_n}(\Gr_{d-1}(\R^{m-1}))$.

Putting this all together, under our assumptions, we have that
$$ k^G_n(d,m-1) + k^G_n(d-1,m-1) \geq k_{Q_n}(\Gr_{d}(\R^{m})) \geq k^G_n(d,m).$$
That these would be, in fact, equalities, follows from the next lemma.

\begin{lem} If $m=2^{n+1} + 2l$ with $l\geq 0$, then
$$ k^G_n(d,m-1) + k^G_n(d-1,m-1) = k^G_n(d,m).$$
\end{lem}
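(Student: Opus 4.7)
The plan is to unfold both sides via the definition of $k^G_n(d,m)$ and then reduce the identity to a term-by-term application of Pascal's rule. The key is to correctly parse how $\epsilon$ and $l$ change when $m$ is decreased by one.

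First I would observe that since $m = 2^{n+1} + 2l$ is even, we have $\epsilon = 0$ for $m$, so
\[
k^G_n(d,m) = \sum_{i=0}^{\lfloor d/2 \rfloor} \binom{2^{n+1}}{d-2i}\binom{l}{i}.
\]
On the other hand, $m-1 = 2^{n+1} - 1 + 2l$ is odd, which corresponds to $\epsilon = 1$ with the same value of $l$. Hence
\[
k^G_n(d, m-1) = \sum_{i=0}^{\lfloor d/2 \rfloor}\binom{2^{n+1}-1}{d-2i}\binom{l}{i},
\qquad
k^G_n(d-1, m-1) = \sum_{i=0}^{\lfloor (d-1)/2 \rfloor}\binom{2^{n+1}-1}{d-1-2i}\binom{l}{i}.
\]
(The upper limits on the sums can be harmlessly taken to be $\lfloor d/2 \rfloor$ in both cases, since the extra binomial coefficients $\binom{2^{n+1}-1}{d-1-2i}$ that might appear vanish.)

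Next I would add the two sums indexed by the same $i$ and invoke Pascal's identity
\[
\binom{2^{n+1}-1}{d-2i} + \binom{2^{n+1}-1}{d-2i-1} = \binom{2^{n+1}}{d-2i}.
\]
Since $d-2i-1 = d-1-2i$, the $i$-th terms combine perfectly, and summing over $i$ gives $k^G_n(d,m)$. Thus the identity follows immediately from Pascal's rule, with no further combinatorial input needed.

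The only real ``obstacle'' is a bookkeeping one: making sure that decrementing $m$ by $1$ flips $\epsilon$ from $0$ to $1$ while leaving $l$ unchanged, so that the two binomial coefficients $\binom{2^{n+1}-1}{\cdot}$ on the left have a common upper index and the Pascal step applies. Once this is clearly stated, the rest of the proof is one line.
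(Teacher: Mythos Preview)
Your proof is correct and is essentially identical to the paper's own argument: both identify that $m-1$ has $\epsilon=1$ with the same $l$, and then apply Pascal's rule $\binom{2^{n+1}-1}{d-2i}+\binom{2^{n+1}-1}{d-1-2i}=\binom{2^{n+1}}{d-2i}$ termwise. Your remark on aligning the summation limits is a helpful bit of bookkeeping that the paper leaves implicit.
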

\begin{proof}
\begin{equation*}
\begin{split}
k^G_n(d,m-1) + k^G_n(d-1,m-1) &
= \sum_i \left [ \binom{2^{n+1}-1}{d-2i} + \binom{2^{n+1}-1}{d-2i-1}\right ]\binom{l}{i} \\
  & =\sum_i \binom{2^{n+1}}{d-2i}\binom{l}{i} \\
  & = k^G_n(d,m).
\end{split}
\end{equation*}
\end{proof}

When all of this happens, we then see that the $Q_n$--homology long exact sequence really is still short exact, and also that the $K(n)$--AHSS must collapse for these three spaces.  Thus there is also a short exact sequence
$$ 0 \ra \widetilde K(n)^*(C_d(\R^m)) \xra{p^*} K(n)^*(\Gr_d(\R^m)) \xra{i^*} K(n)^*(\Gr_d(\R^{m-1})) \ra 0. $$

Finally, the top cohomology class in $H^{d(m-d)}(\Gr_d(\R^m);\Z/2)$ will be a permanent cycle in the AHSS computing $K(n)^*(\Gr_d(\R^m))$ and thus also in the AHSS computing $k(n)^*(\Gr_d(\R^m))$, and this is equivalent to saying that $\Gr_d(\R^m)$ is $k(n)$--oriented.
\end{proof}

\section{Results about $H^*(\Gr_d(\R^m);Q_n)$ when $d=2$} \label{d=2 section}
\mbox{}

In this section we present our results about the $Q_n$--homology of $\Gr_2(\R^m)$, with the focus on understanding the case when $m$ has the form $2^{n+1}-1+ 2l$. \\

To begin with, we know the following:
\begin{itemize}
\item $H^*(\Gr_2(\R^m);\Z/2) = \Z/2[w_1,w_2]/(\bar w_{m-1}, \bar w_m)$.
\item In $H^*(\Gr_2(\R^m);\Z/2)$, the ideal $\widetilde H^*(C_2(\R^m); \Z/2)$ has an additive basis $\{ w_1^i \bar w_{m-2} \ | \ 0 \leq i \leq m-2\}$.
\end{itemize}

Now we collect results that hold in $H^*(\Gr_2(\R^{\infty}); \Z/2)$.

\begin{lem} \label{bar w lem} In $H^*(\Gr_2(\R^{\infty}); \Z/2)$ we have the following.

(a) $\bar w_0 = 1$, $\bar w_1 = w_1$, and, recursively,
$ \bar w_k = w_1 \bar w_{k-1} + w_2 \bar w_{k-2}$.

(b) $\displaystyle w_2^j\bar w_k = \sum_i \binom{j}{i}w_1^{j-i}\bar w_{k+j+i}$

(c) $\displaystyle \bar w_k = \sum_j \binom{k-j}{j} w_1^{k-2j}w_2^j$.

(d) $\bar w_{2^b-1} = w_1^{2^b-1}$ for all $b \geq 0$.

(e) $\displaystyle \bar w_{2^b-2} = \sum_{c=0}^{b-1} w_1^{2^b-2^{c+1}}w_2^{2^c-1}$ for all $b \geq 1$.
\end{lem}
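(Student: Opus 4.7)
The plan is to derive everything from the single defining identity $(1 + w_1 + w_2)(1 + \bar w_1 + \bar w_2 + \cdots) = 1$ in $H^*(\Gr_2(\R^\infty); \Z/2)$, working mod $2$ throughout. For (a), reading off this identity in each degree gives $\bar w_0 = 1$, $\bar w_1 + w_1 = 0$, and $\bar w_k + w_1 \bar w_{k-1} + w_2 \bar w_{k-2} = 0$ for $k \geq 2$. For (b), I would induct on $j$, with $j=0$ trivial; the inductive step multiplies by $w_2$, uses (a) to rewrite $w_2 \bar w_\ell = \bar w_{\ell+2} + w_1 \bar w_{\ell+1}$, reindexes, and applies Pascal's identity $\binom{j}{i-1} + \binom{j}{i} = \binom{j+1}{i}$ to combine terms. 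For (c), I introduce a formal variable $t$; the defining identity becomes $\sum_k \bar w_k t^k = (1 + w_1 t + w_2 t^2)^{-1} = \sum_n (w_1 t + w_2 t^2)^n$ in $H^*(\Gr_2(\R^\infty); \Z/2)[[t]]$, and extracting the $t^k$ coefficient (which forces $n = k - j$) gives the formula.

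Parts (d) and (e) are then purely arithmetic extractions from (c), asking for which $j$ the coefficient $\binom{k - j}{j}$ is odd. For (d), with $k = 2^b - 1$, each bit of $2^b - 1 - j$ is the complement of the corresponding bit of $j$, so by Lucas' theorem $\binom{2^b - 1 - j}{j} \equiv \prod_i \binom{1 - j_i}{j_i} \pmod{2}$ is nonzero only for $j = 0$, leaving the single term $w_1^{2^b - 1}$.

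The main work is in (e). Here I would rewrite $2^b - 2 - j = (2^b - 1) - (j + 1)$, so that Lucas' theorem demands $j_i \leq 1 - (j+1)_i$ for every bit position $i$, equivalently that $j$ and $j + 1$ share no common $1$-bit. A short case analysis on the trailing bits of $j$ --- when $j$ is even and positive, any $1$-bit of $j$ persists unchanged in $j+1$; when $j$ is odd with trailing block $\underbrace{1 \cdots 1}_{t}$, the no-shared-bit condition forces all bits of $j$ above position $t-1$ to vanish --- shows this happens precisely for $j = 2^c - 1$ with $0 \leq c \leq b - 1$. Substituting these $j$ into (c) and simplifying the exponent of $w_1$ to $2^b - 2 - 2(2^c - 1) = 2^b - 2^{c+1}$, and the exponent of $w_2$ to $2^c - 1$, yields the claimed sum. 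The main obstacle is this bit-level case analysis; every other part of the lemma is routine algebra from the generating-function identity.
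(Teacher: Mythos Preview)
Your proposal is correct and follows essentially the same route as the paper: (a) from homogeneous components of the defining relation, (b) by induction on $j$ using (a) and Pascal, (c) via the generating-function expansion of $(1+w_1+w_2)^{-1}$, and (d)--(e) as mod-$2$ binomial extractions from (c). The only difference is that you supply the Lucas-theorem bit analysis for (d) and (e) explicitly, whereas the paper simply asserts which $j$ make the binomial odd; your case analysis on trailing bits for (e) is a valid way to fill that in.
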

\begin{proof}

The homogeneous components of the equation $0 = (1+w_1+w_2)(1+\bar w_1 + \bar w_2 + \dots)$ give statement (a).

Statement (b) is proved by induction on $j$.  The case when $j=0$ is trivial, and statement (a) rewrites as $w_2 \bar w_k = w_1 \bar w_{k+1} + \bar w_{k+2}$, which is the case when $j=1$. One then computes
\begin{equation*}
\begin{split}
w_2^j\bar w_k &
= w_2(w_2^{j-1}\bar w_k) \\
  & = \sum_i \binom{j-1}{i}w_1^{j-1-i}w_2\bar w_{k+j-1+i} \\
  & = \sum_i \binom{j-1}{i}[w_1^{j-i}\bar w_{k+j+i}+ w_1^{j-1-i}\bar w_{k+j+i+1}] \\
  & = \sum_i \left [\binom{j-1}{i}+\binom{j-1}{i-1}\right ] w_1^{j-i}\bar w_{k+j+i} \\
  & = \sum_i \binom{j}{i}w_1^{j-i}\bar w_{k+j+i}.
\end{split}
\end{equation*}

For (c), note that $\bar w_k$ is the homogeneous component of degree $k$ in $\bar w = (1 + w_1 + w_2)^{-1} = \sum_{t=0}^{\infty} (w_1+w_2)^t$.

Statement (d) follows from (c):
$$\bar w_{2^b-1} = \sum_j \binom{2^b-1-j}{j} w_1^{k-2j}w_2^j  = w_1^{2^b-1},$$
using that $\binom{2^b-1-j}{j} \equiv 1 \mod 2$ only if $j=0$.

Similarly, statement (e) follows from (c):
$$\bar w_{2^b-2} = \sum_j \binom{2^b-2-j}{j} w_1^{k-2j}w_2^j  = \sum_{c=0}^{b-1} w_1^{2^b-2^{c+1}}w_2^{2^c-1},$$
using that $\binom{2^b-2-j}{j} \equiv 1 \mod 2$ if and only if $j=2^c-1$ with $0\leq j \leq b-1$.
\end{proof}

Now we determine the action of $Q_n$ on various classes.

\begin{lem} \label{Qn w1 lem}  In $H^*(\Gr_2(\R^{\infty});\Z/2)$, $Q_n(w_1) = w_1^{2^{n+1}} = w_1 \bar w_{2^{n+1}-1}$.
\end{lem}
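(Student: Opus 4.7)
The plan is to establish $Q_n(w_1) = w_1^{2^{n+1}}$ by induction on $n$ using the recursive definition $Q_n = [Q_{n-1}, Sq^{2^n}]$, and then to deduce the second equality from \lemref{bar w lem}(d).

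For the base case, $Q_0 = Sq^1$, so the unstable relation $Sq^{|x|}(x) = x^2$ applied to $w_1 \in H^1$ yields $Q_0(w_1) = w_1^2 = w_1^{2^1}$.

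For the inductive step, assume $Q_{n-1}(w_1) = w_1^{2^n}$. Expanding the commutator,
\begin{equation*}
Q_n(w_1) \;=\; Sq^{2^n}Q_{n-1}(w_1) + Q_{n-1}Sq^{2^n}(w_1).
\end{equation*}
The second summand vanishes since $|w_1| = 1 < 2^n$ for $n \geq 1$. The first summand equals $Sq^{2^n}(w_1^{2^n})$, and the unstable relation applied to $w_1^{2^n} \in H^{2^n}$ gives $Sq^{2^n}(w_1^{2^n}) = (w_1^{2^n})^2 = w_1^{2^{n+1}}$, completing the induction. Then \lemref{bar w lem}(d) with $b = n+1$ gives $\bar w_{2^{n+1}-1} = w_1^{2^{n+1}-1}$, so $w_1\bar w_{2^{n+1}-1} = w_1^{2^{n+1}}$.

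I foresee no real obstacle: the argument is a short induction combining the recursion defining $Q_n$ with the unstable axiom. A conceptually nicer alternative would invoke the splitting principle, pulling $\gamma_2$ back to a product of line bundles so that $w_1 = x_1 + x_2$; then since $Q_n$ is primitive in the Steenrod algebra it acts as a derivation, and using $Q_n(x_i) = x_i^{2^{n+1}}$ (as in the proof of \propref{Qn char class prop}) together with the Frobenius identity in characteristic $2$, one obtains $Q_n(w_1) = x_1^{2^{n+1}} + x_2^{2^{n+1}} = (x_1+x_2)^{2^{n+1}} = w_1^{2^{n+1}}$.
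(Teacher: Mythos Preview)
Your proof is correct and matches the paper's approach: the paper simply cites back to the proof of \propref{Qn char class prop}, where the identical induction (using $Q_0=Sq^1$ and $Q_n=Sq^{2^n}Q_{n-1}+Q_{n-1}Sq^{2^n}$ together with the unstable axiom) computes $Q_n$ on a one-dimensional class, and then invokes \lemref{bar w lem}(d) for the second equality. Your splitting-principle alternative is also fine and is in the same spirit as how the paper uses \propref{Qn char class prop}.
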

\begin{proof} The first equation here was already noted in the proof of \propref{Qn char class prop}, and the second follows from \lemref{bar w lem}(d).
\end{proof}

\begin{lem} \label{Qn w2 lem}  In $H^*(\Gr_2(\R^{\infty});\Z/2)$,
$$Q_n(w_2) =  \sum_{c=0}^{n} w_1^{2^{n+1}-2^{c+1}+1}w_2^{2^c}= w_1 w_2\bar w_{2^{n+1}-2}.$$
\end{lem}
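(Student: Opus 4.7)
The plan is to reduce the computation of $Q_n(w_2)$ to the polynomial ring $\Z/2[t_1,t_2]$ via the splitting principle. The sum of two tautological line bundles over $\RP^\infty\times\RP^\infty$ has classifying map whose induced map on mod 2 cohomology is the inclusion $\Z/2[w_1,w_2] = H^*(\Gr_2(\R^\infty);\Z/2) \hookrightarrow \Z/2[t_1,t_2]$ sending $w_1\mapsto t_1+t_2$ and $w_2\mapsto t_1t_2$. This pullback is injective (it realizes the symmetric polynomials as a subring) and is natural with respect to Steenrod operations, so it suffices to verify the identity after pullback, i.e., to compute $Q_n(t_1t_2)$ in $\Z/2[t_1,t_2]$.

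Set $N=2^{n+1}$. By \lemref{Qn w1 lem} applied to each line factor (equivalently, the calculation $Q_n(t) = t^N$ used in the proof of \propref{Qn char class prop}), we have $Q_n(t_i) = t_i^N$ for $i=1,2$. Since $Q_n$ is primitive in the Steenrod algebra, it acts as a derivation on the cohomology ring, giving
$$Q_n(t_1t_2) \;=\; t_1^N t_2 + t_1 t_2^N \;=\; t_1t_2\bigl(t_1^{N-1}+t_2^{N-1}\bigr).$$
Next, the total dual class pulls back to $\sum_{k\ge 0}\bar w_k \mapsto \bigl((1+t_1)(1+t_2)\bigr)^{-1} = \sum_{i,j\ge 0} t_1^i t_2^j$, so $\bar w_k \mapsto \sum_{i+j=k} t_1^i t_2^j$. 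A quick mod 2 telescoping shows that
$$(t_1+t_2)\bar w_{N-2} \;=\; \sum_{i+j=N-2}\bigl(t_1^{i+1}t_2^j + t_1^i t_2^{j+1}\bigr) \;=\; t_1^{N-1}+t_2^{N-1},$$
because all interior monomials appear with coefficient $2$. Combining the last two displays gives, after pullback, $Q_n(w_2) = w_1w_2\bar w_{N-2}$, and by injectivity the same equality holds in $H^*(\Gr_2(\R^\infty);\Z/2)$.

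The first equality in the statement then follows immediately by substituting the expansion of $\bar w_{2^{n+1}-2}$ provided by \lemref{bar w lem}(e) and distributing the factor $w_1w_2$ over the sum.

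There is no real obstacle: the two minor points requiring care are (i) invoking injectivity under the splitting-principle pullback correctly, and (ii) recognising the power sum $t_1^{N-1}+t_2^{N-1}$ as $w_1\bar w_{N-2}$ via the telescoping above. An alternative strategy would be to induct on $n$ using the recursion $Q_n = Sq^{2^n}Q_{n-1}+Q_{n-1}Sq^{2^n}$ combined with the Wu formulas, but that approach requires more bookkeeping with Cartan expansions than the splitting-principle reduction.
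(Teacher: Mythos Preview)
Your proof is correct and takes a genuinely different route from the paper's. The paper proves the first equality directly by induction on $n$: it checks $Q_0(w_2)=Sq^1(w_2)=w_1w_2$, observes that $Q_n(w_2)=Sq^{2^n}Q_{n-1}(w_2)$ (the other commutator term vanishes for degree reasons), and then applies the Cartan formula together with the explicit values of $Sq^j(w_2^{2^c})$ to carry out the inductive step; the second equality is then read off from \lemref{bar w lem}(e). You instead prove the second equality first, by pulling back along the splitting map to $\Z/2[t_1,t_2]$, using that $Q_n$ is a derivation with $Q_n(t_i)=t_i^{2^{n+1}}$, and recognising the resulting power sum $t_1^{N-1}+t_2^{N-1}$ as $(t_1+t_2)\bar w_{N-2}$ via a one-line telescoping identity; the first equality then drops out of \lemref{bar w lem}(e). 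Your approach is shorter and avoids the Cartan-formula bookkeeping entirely; the paper's inductive argument, on the other hand, stays inside $\Z/2[w_1,w_2]$ and never invokes the splitting principle. Amusingly, you flag the paper's method as the ``alternative strategy'' in your closing remark.
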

\begin{proof}  The second equation here follows from \lemref{bar w lem}(e), so we just need to check the first.  We do this by induction on $n$, where the $n=0$ case is the easily checked:  $Q_0(w_2) = Sq^1(w_2) = w_1w_2$.

Before proceeding with the inductive step, we make two observations.

The first is that for $n \geq 1$, $Q_n(w_2) = Sq^{2^n}Q_{n-1}(w_2)$ because the other term, $Q_{n-1}Sq^{2^n}(w_2)$, will be zero.  This is clear if $n\geq 2$ as then $Sq^{2^n}(w_2) = 0$, and when $n=1$, we observe that $Q_0Sq^2(w_2) = Sq^1(w_2^2) = 0$.

The second observation is that $Sq(w_2) = w_2(1+w_1+w_2)$, so $Sq(w_2^{2^c}) = w_2^{2^c}(1+w_1^{2^c}+w_2^{2^c})$, and thus
$$Sq^j(w_2^{2^c}) = \begin{cases} w_2^{2^c} & \text{if } j=0 \\
w_1^{2^c}w_2^{2^c}& \text{if } j = 2^c \\ w_2^{2^{c+1}} & \text{if } j=2^{c+1} \\ 0 & \text{otherwise.}
\end{cases}$$

Now we check the inductive step of our proof.
\begin{equation*}
\begin{split}
Q_n(w_2) &
= Sq^{2^n}Q_{n-1}(w_2) \\
  & = \sum_{c=0}^{n-1} Sq^{2^n}(w_1^{2^n-2^{c+1}+1}w_2^{2^c}) \\
  & = \sum_{c=0}^{n-1}\sum_{j} Sq^{2^n-j}(w_1^{2^n-2^{c+1}+1})Sq^j(w_2^{2^c}).
\end{split}
\end{equation*}
The terms with $j=0$ are all zero, as $Sq^{2^n}(w_1^{2^n-2^{c+1}+1})=0$ by the unstable condition.  Similarly, the only nonzero term with $j=2^c$ is the term $w_1^{2^{n+1}-1}w_2$, when $c=0$.  Finally, one gets $w_1^{2^{n+1}-2^{c+2}+1}w_2^{2^{c+1}}$ when $j=2^{c+1}$ for all $0 \leq c \leq n-1$.
\end{proof}

We now turn our attention to the behavior of $Q_n$ on $\widetilde H^*(C_2(\R^m); \Z/2)$.

\begin{lem}  In $\widetilde H^*(C_2(\R^m); \Z/2)$, $Q_n(\bar w_{m-2}) = w_1^{2^{n+1}-1}\bar w_{m-2}$.
\end{lem}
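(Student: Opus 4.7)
The plan is to derive this formula as a direct consequence of the characteristic-class framework already set up in Section \ref{twisted subsection}. Recall that, in the notation of that section, $\alpha_n \in H^{2^{n+1}-1}(\Gr_1(\R^{m-1});\Z/2)$ is \emph{defined} by the equation $Q_n(\bar w_{m-2}) = \alpha_n \bar w_{m-2}$ in $\widetilde H^*(C_2(\R^m);\Z/2)$, where the right-hand side uses the $H^*(\Gr_1(\R^{m-1}))$-module structure on the ideal $(\bar w_{m-2})$ coming from $j^*$. Hence the lemma is equivalent to the identification $\alpha_n = w_1^{2^{n+1}-1}$ in the case $d=2$.

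The key input is Corollary \ref{Qn char class cor}, which says that for general $d$, $\alpha_n = w_{Q_n}(\gamma_{d-1})$. Specializing to $d=2$, the bundle $\gamma_{d-1} = \gamma_1$ is the tautological line bundle over $\Gr_1(\R^{m-1}) = \R P^{m-2}$. Being one-dimensional, Proposition \ref{Qn char class prop}(b) applies directly and yields
\[ w_{Q_n}(\gamma_1) = w_1(\gamma_1)^{2^{n+1}-1}, \]
so $\alpha_n = w_1(\gamma_1)^{2^{n+1}-1}$.

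To finish, I would check the module-bookkeeping step: since $j: \Gr_1(\R^{m-1}) \hookrightarrow \Gr_2(\R^m)$ sends $V$ to $V \oplus \R$, the pullback $j^*\gamma_2$ is $\gamma_1$ plus a trivial line, so $j^*(w_1) = w_1(\gamma_1)$. Thus the action of $w_1(\gamma_1)^{2^{n+1}-1}$ on $\bar w_{m-2}$ coincides with the product $w_1^{2^{n+1}-1}\bar w_{m-2}$ computed inside $H^*(\Gr_2(\R^m);\Z/2)$. Substituting gives the desired formula. I do not anticipate any serious obstacle: the entire argument is a formal specialization of results already established in the excerpt, and the only care required is keeping track of the $H^*(\Gr_1(\R^{m-1}))$-module identification. (A self-contained alternative, which I would fall back on if one wished to avoid invoking the characteristic-class formalism, is to transport $\bar w_{m-2}$ across the Thom isomorphism $f^*$ of Section \ref{cofib subsection}, compute $Q_n$ of the Thom class using additivity of $w_{Q_n}$ applied to the trivial bundle $\gamma_1 \oplus \gamma_1^{\bot}$, and then transport back.)
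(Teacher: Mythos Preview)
Your proposal is correct and follows essentially the same approach as the paper: invoke Corollary~\ref{Qn char class cor} to identify $\alpha_n = w_{Q_n}(\gamma_1)$ for the tautological line bundle over $\Gr_1(\R^{m-1})$, then apply Proposition~\ref{Qn char class prop}(b) to compute this as $w_1^{2^{n+1}-1}$. The paper's proof is terser and omits your module-bookkeeping verification that $j^*(w_1) = w_1(\gamma_1)$, but the argument is otherwise identical.
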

\begin{proof}  By \corref{Qn char class cor}, $Q_n(\bar w_{m-2}) = w_{Q_n}(\gamma_1)\bar w_{m-2}$, where $\gamma_1 \ra \Gr_1(\R^{m-1})$ is the canonical line bundle, and \propref{Qn char class prop} tells us that $w_{Q_n}(\gamma_1)=w_1^{2^{n+1}-1}$.
\end{proof}

\begin{rem}  This lemma also admits a proof using the Schubert cell perspective.  See \cite[Lemma 4.9.13]{lloyd thesis}.
\end{rem}

As $Q_n$ is a derivation, the lemma, together with the calculation $Q_n(w_1) = w_1^{2^{n+1}}$, allows one to easily compute the $Q_n$--homology of $C_2(\R^m)$.  What results is the following.

\begin{prop} \label{cofiber Qn prop} (a) In $\widetilde H^*(C_2(\R^m);\Z/2)$,
$$Q_n(w_1^i\bar w_{m-2}) = \begin{cases}
w_1^{2^{n+1}-1+i}\bar w_{m-2} & \text{if } i \text{ is even} \\ 0 & \text{if } i \text{ is odd}.
\end{cases}$$

(b) If $m\leq 2^{n+1}$, then $Q_n$ acts as zero on $\widetilde H^*(C_2(\R^m);\Z/2)$. Thus $\tilde k_{Q_n}(C_2(\R^m)) = m-1$.

(c) If $m > 2^{n+1}$ and is even, then the classes $\{w_1^{2j-1}\bar w_{m-2} \ | \ 1 \leq j \leq 2^n-1\}$ and $\{w_1^{m-2j}\bar w_{m-2} \ | \ 1 \leq j \leq 2^n\}$ represent the $Q_n$--homology classes.  Thus $\tilde k_{Q_n}(C_2(\R^m)) = 2^{n+1}-1$.

(d) If $m > 2^{n+1}$ and is odd, then the classes $\{w_1^{2j-1}\bar w_{m-2} \ | \ 1 \leq j \leq 2^n-1\}$ and $\{w_1^{m-1-2j}\bar w_{m-2} \ | \ 1 \leq j \leq 2^n-1\}$ represent the $Q_n$--homology classes.  Thus $\tilde k_{Q_n}(C_2(\R^m)) = 2^{n+1}-2$.
\end{prop}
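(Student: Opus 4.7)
The plan is to reduce the computation to the derivation property of $Q_n$ together with the three ingredients just established: $Q_n(w_1)=w_1^{2^{n+1}}$, $Q_n(\bar w_{m-2})=w_1^{2^{n+1}-1}\bar w_{m-2}$ inside $\widetilde H^*(C_2(\R^m);\Z/2)$, and the fact that the additive basis $\{w_1^i\bar w_{m-2}\mid 0\le i\le m-2\}$ forces $w_1^k\bar w_{m-2}=0$ for $k>m-2$ (the top class of $\Gr_2(\R^m)$ is in degree $2(m-2)$).

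For (a), a single application of the Leibniz rule, using $Q_n(w_1^i)=iw_1^{i+2^{n+1}-1}$ reduced mod $2$, yields
\[
Q_n(w_1^i\bar w_{m-2}) \;=\; iw_1^{i+2^{n+1}-1}\bar w_{m-2} + w_1^i\cdot w_1^{2^{n+1}-1}\bar w_{m-2} \;=\; (i+1)\,w_1^{i+2^{n+1}-1}\bar w_{m-2} \pmod{2},
\]
which is $w_1^{i+2^{n+1}-1}\bar w_{m-2}$ when $i$ is even, and $0$ when $i$ is odd, exactly as claimed in (a). The right-hand side is automatically zero in $\widetilde H^*(C_2(\R^m);\Z/2)$ whenever the exponent on $w_1$ exceeds $m-2$.

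For (b), (c), (d), part (a) gives a completely explicit action of $Q_n$ on the basis: on odd-indexed basis vectors $Q_n$ vanishes, and on even-indexed ones it is the shift $w_1^i\bar w_{m-2}\mapsto w_1^{i+2^{n+1}-1}\bar w_{m-2}$ (landing in an odd index), with any index past $m-2$ collapsing to zero. In (b), when $m\le 2^{n+1}$ every such shift overflows, so $Q_n\equiv 0$ and the rank equals the basis size $m-1$. When $m>2^{n+1}$, the shift restricts to a bijection between even-indexed basis elements of index $\le m-2^{n+1}-1$ and odd-indexed basis elements of index in $[2^{n+1}-1,m-2]$, so the $Q_n$-homology is spanned by the odd-indexed basis elements of index strictly less than $2^{n+1}-1$ together with the even-indexed basis elements of index strictly greater than $m-2^{n+1}-1$. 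The final step is to enumerate these two ranges in each parity of $m$: this yields the representatives $\{w_1^{2j-1}\bar w_{m-2}\mid 1\le j\le 2^n-1\}$ in both cases, and then either $\{w_1^{m-2j}\bar w_{m-2}\mid 1\le j\le 2^n\}$ (even $m$) or $\{w_1^{m-1-2j}\bar w_{m-2}\mid 1\le j\le 2^n-1\}$ (odd $m$), giving totals $2^{n+1}-1$ in (c) and $2^{n+1}-2$ in (d). The only (small) obstacle is parity bookkeeping: the high-end even indices land differently depending on whether $m-2^{n+1}$ is even or odd, which is exactly what makes (c) and (d) differ by one homology class.
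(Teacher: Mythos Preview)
Your proof is correct and follows exactly the approach the paper indicates: the paper itself merely states that ``as $Q_n$ is a derivation, the lemma, together with the calculation $Q_n(w_1)=w_1^{2^{n+1}}$, allows one to easily compute the $Q_n$--homology of $C_2(\R^m)$,'' and then records the proposition without further detail. Your argument supplies precisely those omitted details---the Leibniz computation for part (a) and the parity bookkeeping for parts (b)--(d)---and the representatives and counts you obtain match the statement exactly.
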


\begin{proof}[Proof of \thmref{d=2 odd m boundary thm}]  Let $m = 2^{n+1}+1+2l$.  We need to prove that the map
$$\widetilde H^{*}(C_2(\R^{m});Q_n) \xra{p^*} H^{*}(\Gr_2(\R^{m});Q_n)$$
is zero.  In other words, we need to show that representatives of the $Q_n$--homology classes in $\widetilde H^*(C_2(\R^m);\Z/2)$ are in the image of $Q_n$ when regarded in $H^*(\Gr_2(\R^m);\Z/2)$.

By \propref{cofiber Qn prop}(d), these representatives are in two families:
$$w_1^{1+2j}\bar w_{2^{n+1}-1+2l} \text{\hspace{.2in} and \hspace{.2in}} w_1^{2l+2+2j}\bar w_{2^{n+1}-1+2l},$$ both with $0 \leq j \leq 2^n-2$.

If we can find $a,b \in H^*(\Gr_2(\R^m;\Z/2)$ such that $Q_n(a) = w_1\bar w_{2^{n+1}-1+2l}$ and $Q_n(b) = w_1^{2l+2}\bar w_{2^{n+1}-1+2l}$, we will be done, as then
$$Q_n(w_1^{2j}a) = w_1^{1+2j}\bar w_{2^{n+1}-1+2l}\text{\hspace{.2in} and \hspace{.2in}} Q_n(w_1^{2j}b) = w_1^{2l+2+2j}\bar w_{2^{n+1}-1+2l}.$$

Thus the next two propositions finish the proof.
\end{proof}

\begin{prop} \label{a prop} In $H^*(\Gr_2(\R^{\infty});\Z/2)$,
$$Q_n(w_1\bar w_{2l}) = w_1\bar w_{2^{n+1}-1+2l}.$$
\end{prop}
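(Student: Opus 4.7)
The plan is to reduce to a clean computation in the polynomial ring $\Z/2[x_1,x_2]$ via the splitting principle. The direct-sum map $BO(1)\times BO(1) \to BO(2)$ induces an injective ring homomorphism
\[ H^*(\Gr_2(\R^\infty);\Z/2) = \Z/2[w_1, w_2] \hookrightarrow \Z/2[x_1, x_2], \]
sending $w_1 \mapsto x_1 + x_2$ and $w_2 \mapsto x_1 x_2$; the image is the symmetric subring, and the map commutes with all Steenrod operations, in particular with $Q_n$. It therefore suffices to verify the claimed identity after applying this pullback.

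Upstairs, the defining relation $(1+w_1+w_2)\sum_k \bar w_k = 1$ becomes $(1+x_1)(1+x_2)\sum_k \bar w_k = 1$, so $\bar w_k$ pulls back to the complete symmetric polynomial $h_k(x_1,x_2) = \sum_{i+j=k} x_1^i x_2^j$. A brief telescoping check, valid because we are in characteristic $2$, gives the clean identity
\[ w_1 \bar w_k \;=\; (x_1+x_2)\, h_k(x_1,x_2) \;=\; x_1^{k+1} + x_2^{k+1}. \]
Applying this to both $k = 2l$ and $k = 2^{n+1} - 1 + 2l$ reduces the proposition to the equation
\[ Q_n\bigl(x_1^{2l+1} + x_2^{2l+1}\bigr) \;=\; x_1^{2l+2^{n+1}} + x_2^{2l+2^{n+1}} \]
in $\Z/2[x_1, x_2]$.

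To close, I would use that $Q_n$ is a derivation with $Q_n(x_i) = x_i^{2^{n+1}}$, so $Q_n(x_i^m) = m\, x_i^{m + 2^{n+1} - 1}$, which modulo $2$ equals $x_i^{m + 2^{n+1} - 1}$ when $m$ is odd and vanishes when $m$ is even. Plugging in $m = 2l+1$, which is odd, produces exactly the required right-hand side. There is no real obstacle here: the splitting principle injection and the mod $2$ telescoping are both elementary. The one point to flag is that the identity depends on $k=2l$ being \emph{even}; the analogue for odd $k$ would fail, which is consistent with the fact that the proposition is specifically about $w_1 \bar w_{2l}$.
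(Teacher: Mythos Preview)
Your proof is correct and takes a genuinely different route from the paper's. The paper argues by induction on $l$: it verifies the cases $l=0$ and $l=1$ by hand using the formulae $Q_n(w_1)=w_1^{2^{n+1}}$ and $Q_n(w_2)=w_1w_2\bar w_{2^{n+1}-2}$, and then uses the recursion $\bar w_{2l}=w_2^2\bar w_{2l-4}+w_1^2\bar w_{2l-2}$ together with the fact that $Q_n$ kills squares to push the inductive step through. Your argument instead passes to the symmetric polynomial model via the injective, Steenrod-equivariant map $\Z/2[w_1,w_2]\hookrightarrow\Z/2[x_1,x_2]$, where the telescoping identity $w_1\bar w_k=x_1^{k+1}+x_2^{k+1}$ makes the derivation computation $Q_n(x_i^{2l+1})=x_i^{2l+2^{n+1}}$ immediately decisive. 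This is cleaner and avoids both the induction and the auxiliary formula for $Q_n(w_2)$; the paper's approach, by contrast, stays entirely inside the Stiefel--Whitney presentation and reuses machinery (Lemmas on $\bar w_k$ and $Q_n(w_2)$) that it has already developed for other parts of the argument. One small remark on your closing sentence: the telescoping identity $w_1\bar w_k=x_1^{k+1}+x_2^{k+1}$ itself holds for all $k$; what genuinely requires $k$ even is only the derivation step, since $Q_n$ annihilates even powers.
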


\begin{prop} \label{b prop} In $H^*(\Gr_2(\R^{2^{n+1}+1+2l});\Z/2)$,
$$Q_n(w_2^{2l+1}) = w_1^{2l+2}\bar w_{2^{n+1}-1+2l}.$$
\end{prop}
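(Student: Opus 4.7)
The plan is to reduce the computation of $Q_n(w_2^{2l+1})$ to a single application of $Q_n$ on $w_2$, and then to expand the result using the recursions among the $\bar w_k$'s.

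First, I would exploit that $Q_n$ is a derivation on the mod $2$ cohomology ring. Since $Q_n(w_2^2) = 2 w_2 Q_n(w_2) = 0$ in characteristic $2$, the derivation property gives $Q_n(w_2^{2j}) = 0$ for all $j$, and therefore
\begin{equation*}
 Q_n(w_2^{2l+1}) = Q_n(w_2) \cdot w_2^{2l}.
\end{equation*}
Invoking \lemref{Qn w2 lem} to substitute $Q_n(w_2) = w_1 w_2 \bar w_{2^{n+1}-2}$, this becomes
\begin{equation*}
 Q_n(w_2^{2l+1}) = w_1 \, w_2^{2l+1} \, \bar w_{2^{n+1}-2}.
\end{equation*}

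Next, I would apply \lemref{bar w lem}(b) with $j = 2l+1$ and $k = 2^{n+1}-2$ to expand
\begin{equation*}
w_2^{2l+1} \bar w_{2^{n+1}-2} = \sum_{i} \binom{2l+1}{i} w_1^{2l+1-i} \bar w_{2^{n+1}-1+2l+i}.
\end{equation*}
This identity holds in $H^*(\Gr_2(\R^{\infty});\Z/2)$; now I specialize to $H^*(\Gr_2(\R^{m});\Z/2)$ with $m = 2^{n+1}+1+2l$. By \lemref{w and bar w lemma}(a), the kernel ideal is generated by $\bar w_{m-1}$ and $\bar w_m$, and then the recursion \lemref{bar w lem}(a) forces $\bar w_k = 0$ for every $k \geq m-1 = 2^{n+1}+2l$. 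Thus the only surviving term in the expansion is the $i=0$ contribution, yielding $w_1^{2l+1} \bar w_{2^{n+1}-1+2l}$. Multiplying by the leading $w_1$ gives the claimed formula.

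No step looks particularly obstructive: the derivation trick collapses the Leibniz expansion to a single term, and the truncation of $\bar w_k$ at the top weight of $\Gr_2(\R^m)$ is sharp enough to kill every term of the Lemma \ref{bar w lem}(b) expansion except the one we want. The only thing worth double-checking is the indexing: that $2l+1$ is odd (so the derivation argument applies), that $2^{n+1}-2$ matches the exponent appearing in \lemref{Qn w2 lem}, and that $m-2 = 2^{n+1}-1+2l$ agrees with the target exponent of $\bar w$ on the right-hand side.
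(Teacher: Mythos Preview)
Your proof is correct and follows essentially the same approach as the paper: both use the derivation property to reduce to $w_2^{2l}Q_n(w_2)$, invoke \lemref{Qn w2 lem}, expand via \lemref{bar w lem}(b), and observe that all terms with $i\geq 1$ involve $\bar w_k$ with $k\geq m-1$ and hence vanish in the quotient. Your explicit justification that $\bar w_k=0$ for all $k\geq m-1$ via the recursion is a minor elaboration of what the paper leaves implicit.
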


Before proving these, we first run through how \thmref{d=2 odd m boundary thm} leads to the proof of \thmref{d=2 thm}.

\begin{proof}[Proof of \thmref{d=2 thm}]  Our goal is to show that if $m = 2^{n+1} - \epsilon + 2l$ with $\epsilon =$ 0 or 1, and $l \geq 0$, then
$k_{Q_n}(\Gr_2(\R^m)) = \binom{2^{n+1}-\epsilon}{2} + l$, the lower bound coming from \thmref{lower bound thm}.

We prove this by induction on $m$, with the two cases when $l=0$ already covered by \thmref{collapse thm}.
The case when $m$ is even is covered by \thmref{even m properties thm}(c), as we know our calculations are right for $(n,1,m-1)$, and by induction we can assume the theorem for $(n,2,m-1)$.

Suppose $m$ is odd, so $\epsilon = 1$ and $m-1 = 2^{n+1} + 2(l-1)$. By induction, we can assume that $k_{Q_n}(\Gr_2(\R^{m-1})) = \binom{2^{n+1}}{2}+ (l-1)$.  Then we have
\begin{equation*}
\begin{split}
k_{Q_n}(\Gr_2(\R^m)) &
= k_{Q_n}(\Gr_2(\R^{m-1})) - \bar k_{Q_n}(C_2(\R^m)) \text{ \hspace{.1in} (by \thmref{d=2 odd m boundary thm})}\\
  & = \binom{2^{n+1}}{2}+ (l-1) - (2^{n+1}-2) \text{ \hspace{.1in} (by \propref{cofiber Qn prop}(d))} \\
  & = \binom{2^{n+1}-1}{2} + l.
\end{split}
\end{equation*}
\end{proof}

It remains to prove Propositions \ref{a prop} and \ref{b prop}.

\begin{proof}[Proof of \propref{a prop}]
We prove by induction on $l$ that
$$Q_n(w_1\bar w_{2l}) = w_1\bar w_{2^{n+1}-1+2l}$$
holds in $H^*(\Gr_2(\R^{\infty});\Z/2)$.

We start the induction by checking both the $l=0$ and $l=1$ cases.

When $l=0$, this reads $Q_n(w_1) = w_1\bar w_{2^{n+1}}$, proved in \lemref{Qn w1 lem}.

We check the $l=1$ case using both \lemref{Qn w1 lem} and \lemref{Qn w2 lem}:
\begin{equation*}
\begin{split}
Q_n(w_1\bar w_2) &
= Q_n(w_1(w_2+w_1^2)) = Q_n(w_1w_2 + w_1^3) \\
  & = Q_n(w_1)w_2 + w_1Q_n(w_2) + w_1^2Q_n(w_1) \\
  & = w_1w_2\bar w_{2^{n+1}-1} + w_1^2 w_2\bar w_{2^{n+1}-2} + w_1^3\bar w_{2^{n+1}-1} \\
  & = w_1[w_2\bar w_{2^{n+1}-1} + w_1(w_2\bar w_{2^{n+1}-2} + w_1\bar w_{2^{n+1}-1})] \\
  & = w_1[w_2\bar w_{2^{n+1}-1} + w_1 \bar w_{2^{n+1}}] = w_1 \bar w_{2^{n+1}+1}.
\end{split}
\end{equation*}

For the inductive case, we use the identity $\bar w_k = w_2^2\bar w_{k-4}+ w_1^2\bar w_{k-2}$ which holds for all $k \geq 4$.  Then we have
\begin{equation*}
\begin{split}
Q_n(w_1\bar w_{2l}) &
= Q_n(w_1w_2^2\bar w_{2(l-2)} + w_1^3 \bar w_{2(l-1)})  \\
  & = w_2^2Q_n(w_1\bar w_{2(l-2)}) + w_1^2 Q_n(w_1 \bar w_{2(l-1)}) \\
  & = w_1w_2^2\bar w_{2^{n+1} + 2(l-2) -1} + w_1^3\bar w_{2^{n+1} + 2(l-1) -1} \\
  & = w_1[w_2^2\bar w_{2^{n+1} + 2(l-2) -1} + w_1^2\bar w_{2^{n+1} + 2(l-1) -1}] \\
  & = w_1 \bar w_{2^{n+1} + 2l -1}.
\end{split}
\end{equation*}

\end{proof}

\begin{proof}[Proof of \propref{b prop}]  We wish to prove that
$$Q_n(w_2^{2l+1}) = w_1^{2l+2}\bar w_{2^{n+1}-1+2l}$$
holds in $H^*(\Gr_2(\R^{2^{n+1}+1+2l});\Z/2)$.

We begin with a calculation in $H^*(\Gr_2(\R^{\infty});\Z/2)$:
\begin{equation*}
\begin{split}
Q_n(w_2^{2l+1}) & = w_2^{2l}Q_n(w_2) \\
  & = w_1w_2^{2l+1} \bar w_{2^{n+1}-2} \text{\hspace{.1in} (using \lemref{Qn w2 lem})}\\
  & = \sum_i \binom{2l+1}{i}w_1^{2l+2-i}\bar w_{2^{n+1}+2l-1+i} \text{\hspace{.1in} (using \lemref{bar w lem}(b))}.
\end{split}
\end{equation*}
When we project this sum onto
$$H^*(\Gr_2(\R^{2^{n+1}+1+2l});\Z/2) = \Z/2[w_1,w_2]/(\bar w_k \ | \ k\geq 2^{n+1}+2l),$$ only the term with $i=0$ is not zero. In other words
$$Q_n(w_2^{2l+1}) = w_1^{2l+2}\bar w_{2^{n+1}-1+2l}$$
holds in $H^*(\Gr_2(\R^{2^{n+1}+1+2l});\Z/2)$.
\end{proof}

\section{Towards the conjectures} \label{speculation section}
\mbox{}
\vspace{.1in}

As organized in this paper, we are trying to calculate $H^*(\Gr_d(\R^m); Q_n)$ by induction on $m$ (and $d$) with two steps:
\begin{itemize}
\item  calculate $\widetilde H^*(C_d(\R^m);Q_n)$, recalling that $C_d(\R^m)$ is the Thom space of a bundle over $\Gr_{d-1}(\R^{m-1})$, and
\item  calculate $\delta: H^*(\Gr_d(\R^{m-1});Q_n) \ra \widetilde H^{*+2^{n+1}-1}(C_d(\R^{m});Q_n)$.
\end{itemize}

When $m$ is even, \thmref{even m properties thm} says we can carry through with this plan.  In this section we speculate about how things might go when $m$ is odd.

Firstly, we have the analogues of \thmref{collapse thm} and \thmref{lower bound thm} for $\bar k_n(C_d(\R^m)) = \dim_{K(n)_*}\widetilde K(n)^*(C_d(\R^m))$.

\begin{thm} \label{C_d collapse thm}  If $m \leq 2^{n+1}$, then $\bar k_n(C_d(\R^m)) = \binom{m-1}{d-1}$.
\end{thm}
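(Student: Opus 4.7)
The plan is to sandwich $\bar k_n(C_d(\R^m))$ between matching upper and lower bounds, both equal to $\binom{m-1}{d-1}$.

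For the upper bound, I would use the AHSS converging to $\widetilde K(n)^*(C_d(\R^m))$. Its $E_2$--page has total dimension over $K(n)^*$ equal to $\dim_{\Z/2} \widetilde H^*(C_d(\R^m);\Z/2)$. From the long exact cohomology sequence of the cofibration $\Gr_d(\R^{m-1}) \xra{i} \Gr_d(\R^m) \xra{p} C_d(\R^m)$, together with the fact that $i^*$ is surjective (either by Lemma \ref{w and bar w lemma}(c), or because the Schubert cells of $\Gr_d(\R^{m-1})$ form a subset of those of $\Gr_d(\R^m)$), one has $\dim_{\Z/2} \widetilde H^*(C_d(\R^m);\Z/2) = \binom{m}{d} - \binom{m-1}{d} = \binom{m-1}{d-1}$. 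Therefore $\bar k_n(C_d(\R^m)) \leq \binom{m-1}{d-1}$.

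For the lower bound, I would apply $K(n)^*$ to the same cofibration. Exactness at the middle term gives
$$\bar k_n(C_d(\R^m)) + k_n(\Gr_d(\R^{m-1})) \geq k_n(\Gr_d(\R^m)).$$
Because $m \leq 2^{n+1}$ also forces $m-1 \leq 2^{n+1}$, Theorem \ref{collapse thm} applies to both Grassmannians and yields $k_n(\Gr_d(\R^m)) = \binom{m}{d}$ and $k_n(\Gr_d(\R^{m-1})) = \binom{m-1}{d}$. Combining these gives $\bar k_n(C_d(\R^m)) \geq \binom{m}{d} - \binom{m-1}{d} = \binom{m-1}{d-1}$, matching the upper bound.

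There is no serious obstacle here: everything needed has already been set up. The only thing worth double--checking is the surjectivity of $i^*$ (equivalently, the injectivity of $p^*$) used in computing the dimension of $\widetilde H^*(C_d(\R^m);\Z/2)$, and this is immediate from the Schubert cell basis or from the explicit description of the kernel $J(d,m-d)$. In effect, this theorem is a direct corollary of Theorem \ref{collapse thm} packaged through the cofiber long exact sequence.
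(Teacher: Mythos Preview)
Your proof is correct and uses the same ingredients as the paper: \thmref{collapse thm} applied to both $\Gr_d(\R^{m-1})$ and $\Gr_d(\R^m)$, together with the cofiber long exact sequence and the Schubert cell count. The only difference is organizational: the paper argues directly that, since the AHSS collapses at $E_2$ for both Grassmannians and $i_*$ is injective in mod~$2$ homology, $i_*$ is injective in $K(n)_*$, making the long exact sequence short exact; you instead reach the same conclusion via a sandwich between the $E_2$ upper bound and the long-exact-sequence lower bound.
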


\begin{proof} \thmref{collapse thm} implies that, if $m \leq 2^{n+1}$, the inclusion $\Gr_d(\R^{m-1}) \ra \Gr_d(\R^m)$ induces an inclusion $K(n)_*(\Gr_d(\R^{m-1})) \ra K(n)_*(\Gr_d(\R^m))$, as this is true in mod $p$ homology.  Thus
\begin{equation*}
\begin{split}
\bar k_n(C_d(\R^m)) &
= k_n(\Gr_d(\R^m)) -  k_n(\Gr_d(\R^{m-1})) \\
  & = \binom{m}{d} - \binom{m-1}{d} = \binom{m-1}{d-1}.
\end{split}
\end{equation*}
\end{proof}

\begin{thm} \label{Cd lower bound thm} Let $m = 2^{n+1} - \epsilon + 2l$ with $\epsilon =$ 0 or 1, and $l \geq 0$.  Then
$$ k_n(C_d(\R^m)) \geq \sum_{i=0}^{\left\lfloor d/2 \right\rfloor} \binom{2^{n+1}- 1-\epsilon}{d-1-2i}\binom{l}{i}.$$
\end{thm}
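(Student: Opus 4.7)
The plan is to imitate the proof of \thmref{lower bound thm}, but to now run it $C_4$--equivariantly for the pair $\R^{m-1} \subset \R^m$, and then apply \thmref{chromatic fixed point thm} to the cofiber $C_d(\R^m)$ directly.

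First I would take the $m$--dimensional real $C_4$--representation
$$V = L_1^{2^n} \oplus L_2^{2^n-\epsilon} \oplus R^l$$
used in the proof of \thmref{lower bound thm}, and select the $C_4$--invariant hyperplane
$$V' = L_1^{2^n-1} \oplus L_2^{2^n-\epsilon} \oplus R^l \subset V.$$
The inclusion $V' \subset V$ yields a $C_4$--equivariant closed cofibration $\Gr_d(V') \hookrightarrow \Gr_d(V)$, with cofiber $C_d(V)$ a finite based $C_4$--CW complex homotopy equivalent to $C_d(\R^m)$. Since the $C_4$--fixed point functor preserves quotients by closed invariant subspaces, taking fixed points gives a cofiber sequence
$$\Gr_d(V')^{C_4} \hookrightarrow \Gr_d(V)^{C_4} \to C_d(V)^{C_4}.$$

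Next, by \thmref{chromatic fixed point thm} applied to $C_d(\R^m)$, together with the long exact sequence in $K(n-1)^*$ for the fixed-point cofiber sequence above, I get
$$\bar k_n(C_d(\R^m)) \;\geq\; \bar k_{n-1}(C_d(V)^{C_4}) \;\geq\; k_{n-1}(\Gr_d(V)^{C_4}) - k_{n-1}(\Gr_d(V')^{C_4}).$$
Now \propref{fixed point formula} describes each of the two fixed-point sets as a disjoint union of products of Grassmanians over $\R$ and $\C$, all of dimension at most $2^n$ in the real factors. By \thmref{collapse thm} applied at height $n-1$, every factor $k_{n-1}(\Gr_j(\R^{2^n}))$, $k_{n-1}(\Gr_j(\R^{2^n-1}))$, $k_{n-1}(\Gr_k(\R^{2^n-\epsilon}))$ equals the corresponding binomial coefficient, and $k_{n-1}(\Gr_i(\C^l)) = \binom{l}{i}$ for cell-counting reasons.

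Subtracting term-by-term and applying Pascal's identity $\binom{2^n}{j}-\binom{2^n-1}{j}=\binom{2^n-1}{j-1}$, followed by Vandermonde's identity $\sum_{j+k = d-1-2i}\binom{2^n-1}{j}\binom{2^n-\epsilon}{k}=\binom{2^{n+1}-1-\epsilon}{d-1-2i}$, will collapse the difference to
$$\sum_{i=0}^{\lfloor d/2 \rfloor} \binom{2^{n+1}-1-\epsilon}{d-1-2i}\binom{l}{i},$$
which is the claimed lower bound (and $\bar k_n \leq k_n$ handles the unreduced version). I expect the only subtle step to be the identification $C_d(V)^{C_4} \simeq \Gr_d(V)^{C_4}/\Gr_d(V')^{C_4}$; the rest is bookkeeping with binomial identities. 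The case $n=0$ has to be treated separately since \thmref{chromatic fixed point thm} needs $n-1 \geq 0$, but in that range both sides can be read off from the known rational / Stiefel--Whitney cell structure.
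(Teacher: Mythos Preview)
Your proposal is correct and follows the same overall strategy as the paper: choose the $C_4$--representation $V = L_1^{2^n}\oplus L_2^{2^n-\epsilon}\oplus R^l$ and the invariant hyperplane $V'=L_1^{2^n-1}\oplus L_2^{2^n-\epsilon}\oplus R^l$, apply \thmref{chromatic fixed point thm} to the cofiber, and reduce to binomial bookkeeping via \thmref{collapse thm} at height $n-1$.

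The one genuine difference is how you bound $\bar k_{n-1}(C_d(V)^{C_4})$. The paper explicitly identifies this fixed-point cofiber as the wedge
\[
\bigvee_{j+k+2i=d} C_j(\R^{2^n}) \sm \Gr_k(\R^{2^n-\epsilon})_+ \sm \Gr_i(\C^l)_+
\]
and then invokes the separately-proved \thmref{C_d collapse thm} to evaluate $\bar k_{n-1}(C_j(\R^{2^n}))=\binom{2^n-1}{j-1}$ on each summand. You instead use the long-exact-sequence triangle inequality $\bar k_{n-1}(C)\ge k_{n-1}(B)-k_{n-1}(A)$ and subtract the two fixed-point Grassmannian totals directly. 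Your route is slightly more economical, since it bypasses both the wedge identification and the auxiliary \thmref{C_d collapse thm}; the paper's route gives a bit more, namely the exact value of $\bar k_{n-1}(C_d(V)^{C_4})$ rather than just a lower bound. Numerically the two agree here because the restriction maps $K(n-1)^*(\Gr_j(\R^{2^n}))\to K(n-1)^*(\Gr_j(\R^{2^n-1}))$ are surjective in the collapse range, so your inequality is in fact an equality.
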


\begin{proof}  The proof is similar to the proof of \thmref{lower bound thm}, with a little tweak.

If $V$ is a real representation of $C_4$, and $W$ is a subrepresentation, let $C_d(V,W)$ denote the cofiber of the inclusion $\Gr_d(W) \hra \Gr_d(V)$: this is a based $C_4$ space.

If $\dim V = m$ and $\dim W = m-1$, then $C_d(\R^m) = C_d(V,W)$ and thus $\bar k_n(C_d(\R^m)) \geq \bar k_n(C_d(V,W)^{C_4})$, by our chromatic fixed point theorem, \thmref{chromatic fixed point thm}.  Furthermore, $C_d(V,W)^{C_4}$ will be the cofiber of the inclusion $\Gr_d(W)^{C_4} \hra \Gr_d(V)^{C_4}$.

Now we choose $V$ and $W$.  Recall that $L_1$ and $L_2$ were the one dimensional real representation of $C_4$ and $R$ was the two dimensional irreducible.  We let $V = L_1^{2^n} \oplus L_2^{2^n-\epsilon} \oplus R^l$ and $W = L_1^{2^n-1} \oplus L_2^{2^n-\epsilon} \oplus R^l$.

\propref{fixed point formula} tells us that
$$ \Gr_d(V)^{C_4} = \bigsqcup_{j+k+2i=d} \Gr_j(\R^{2^n}) \times \Gr_k(\R^{2^n-\epsilon})\times \Gr_i(\C^l)$$
and
$$ \Gr_d(W)^{C_4} = \bigsqcup_{j+k+2i=d} \Gr_j(\R^{2^n-1}) \times \Gr_k(\R^{2^n-\epsilon})\times \Gr_i(\C^l),$$
so that
$$ C_d(V,W)^{C_4} = \bigvee_{j+k+2i=d} C_j(\R^{2^n}) \sm \Gr_k(\R^{2^n-\epsilon})_+ \sm \Gr_i(\C^l)_+.$$

Thus we have
\begin{equation*}
\begin{split}
\bar k_n(C_d(\R^m)) &
\geq  \sum_{j+k+2i=d} \bar k_{n-1}(C_j(\R^{2^n}))k_{n-1}(\Gr_k(\R^{2^n-\epsilon}))k_{n-1}(\Gr_i(\C^{l})) \\
  & = \sum_{j+k+2i=d} \binom{2^n-1}{j-1}\binom{2^n-\epsilon}{k}\binom{l}{i} \text{ \ (using \thmref{C_d collapse thm} and \thmref{collapse thm})} \\
  & = \sum_i \left [ \sum_{j+k=d-2i} \binom{2^n-1}{j-1}\binom{2^n-\epsilon}{k} \right ] \binom{l}{i} \\
  & = \sum_i \binom{2^{n+1}-1-\epsilon}{d-1-2i}\binom{l}{i}.
\end{split}
\end{equation*}

\end{proof}

\begin{conj} \label{kn of cofib conj}Equality holds in \thmref{Cd lower bound thm}.
\end{conj}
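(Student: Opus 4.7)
The lower bound is established by \thmref{Cd lower bound thm}, so the task is to match it from above. The natural route is via the Atiyah--Hirzebruch inequality $\bar k_n(C_d(\R^m)) \leq \bar k_{Q_n}(C_d(\R^m))$, which reduces the problem to showing
\[
\bar k_{Q_n}(C_d(\R^m)) = \sum_{i=0}^{\left\lfloor d/2 \right\rfloor} \binom{2^{n+1}-1-\epsilon}{d-1-2i}\binom{l}{i}.
\]

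For $m$ even the plan is brief: \thmref{even m properties thm}(b) gives $\bar k_{Q_n}(C_d(\R^m)) = k_{Q_n}(\Gr_{d-1}(\R^{m-1}))$, so applying \conjref{collapsing conj} to $(n,d-1,m-1)$ (where $m-1 = 2^{n+1}-1+2l$ is odd) yields the desired formula. This is essentially a repackaging of the short exact sequence already produced in \thmref{even m properties thm}(c).

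For $m$ odd the situation is genuinely more delicate. The tempting route of generalizing \thmref{d=2 odd m boundary thm} to show vanishing of $p^* \colon \widetilde H^*(C_d(\R^m); Q_n) \to H^*(\Gr_d(\R^m); Q_n)$ (and thereby collapsing the long exact sequence into a short exact sequence) already fails for $d \geq 3$: at $(n,d,m) = (1,3,5)$, the conjectural value $\bar k_n(C_3(\R^5)) = 2$ exceeds $k_n(\Gr_3(\R^4)) - k_n(\Gr_3(\R^5)) = 4 - 4 = 0$. A Thom-isomorphism shortcut is equally unavailable, since $\alpha_n = w_{Q_n}(\gamma_{d-1})$ is typically nonzero and so $\gamma_{d-1}^{\bot}$ is not $K(n)$-orientable.

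The plan for odd $m$ is instead to translate the problem via \propref{twisted prop} into computing the homology of the twisted complex $(H^*(\Gr_{d-1}(\R^{m-1})); \widehat Q_n)$ with $\widehat Q_n(y) = Q_n(y) + y\alpha_n$, and then induct on $d$. A natural inductive handle is the cofiber sequence $\Gr_{d-1}(\R^{m-2}) \hookrightarrow \Gr_{d-1}(\R^{m-1}) \to C_{d-1}(\R^{m-1})$, which relates the $\widehat Q_n$-homology on the (now even-dimensional) Grassmanian $\Gr_{d-1}(\R^{m-1})$ to $Q_n$-homology on smaller Grassmanians. The \emph{main obstacle} is understanding the interaction between the twist $\alpha_n$ and the Schubert basis: what is lacking is either an extension of Lenart's formula (\ref{formula:Dk}) describing $\widehat Q_n$ combinatorially on $s_\lambda$, or, geometrically, the ``sensible constructions of classes in odd degrees'' flagged at the end of \secref{introduction}.
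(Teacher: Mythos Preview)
The statement you are addressing is \conjref{kn of cofib conj}, which the paper leaves \emph{open}: there is no proof in the paper to compare against. Immediately after stating it, the authors remark that it would follow from the $Q_n$--homology computation \conjref{Cofib collapsing conj}, and the remainder of \secref{speculation section} explores numerical consistency among the various conjectures via the boundary map $\delta$ rather than proving any of them.

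Your plan is in line with that discussion. Your reduction $\bar k_n \leq \bar k_{Q_n}$ is exactly the route the paper takes throughout, and your even-$m$ step is correct and matches what the paper does implicitly: \thmref{even m properties thm}(b) gives $\bar k_{Q_n}(C_d(\R^m)) = k_{Q_n}(\Gr_{d-1}(\R^{m-1}))$, and for $m = 2^{n+1}+2l$ the right-hand side is governed by \conjref{collapsing conj} at $(n,d-1,m-1)$, whose conjectural value $\sum_i \binom{2^{n+1}-1}{d-1-2i}\binom{l}{i}$ agrees with the target. But note this is conditional on \conjref{collapsing conj}, so it is a reduction between conjectures, not a proof.

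For odd $m$ you correctly diagnose that neither the vanishing of $p^*$ nor $K(n)$--orientability of $\gamma_{d-1}^{\bot}$ is available for $d\geq 3$, and your proposed inductive scheme via the twisted complex $(H^*(\Gr_{d-1}(\R^{m-1})),\widehat Q_n)$ is a reasonable line of attack. The paper does not pursue this; instead it records what the rank of $\delta$ would have to be (\lemref{boundary lemma} and \conjref{boundary conj}) for everything to be consistent. Your identified obstacle---a combinatorial handle on $\widehat Q_n(s_\lambda)$ or an odd-degree class construction---is precisely the gap the authors flag, and neither you nor the paper closes it.
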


As before, this would be implied by a conjectural calculation of the $Q_n$ homology of $C_d(\R_n)$.

\begin{conj} \label{Cofib collapsing conj} Let $m = 2^{n+1} - \epsilon + 2l$ with $\epsilon =$ 0 or 1, and $l \geq 0$.  Then
$$ \bar k_{Q_n}(C_d(\R^m)) = \sum_i \binom{2^{n+1}-1-\epsilon}{d-1-2i}\binom{l}{i}.$$.
\end{conj}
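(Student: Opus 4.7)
The lower bound
$$\bar k_{Q_n}(C_d(\R^m)) \geq \sum_{i=0}^{\lfloor d/2 \rfloor} \binom{2^{n+1}-1-\epsilon}{d-1-2i}\binom{l}{i}$$
is already in hand: Theorem \ref{Cd lower bound thm} supplies the corresponding lower bound for $\bar k_n(C_d(\R^m))$, and the cofiber version of Lemma \ref{AHSS & ASS lem} provides $\bar k_{Q_n}(X) \geq \bar k_n(X)$. My plan is therefore to establish the matching upper bound, by induction on $m$ run in parallel with the induction proving Conjecture \ref{collapsing conj}.

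For the base range $m \leq 2^{n+1}$, I would first use the corollary to Theorem \ref{collapse thm} to see $Q_n(\bar w_{m-d}) = 0$ in $H^*(\Gr_d(\R^m); \Z/2)$; by the definition of $\alpha_n$ together with the Thom isomorphism, this forces $\alpha_n = 0$ in $H^*(\Gr_{d-1}(\R^{m-1}); \Z/2)$. The same corollary applied to $(d-1,m-1)$ gives $Q_n = 0$ on $H^*(\Gr_{d-1}(\R^{m-1}); \Z/2)$, so the twisted operator $\widehat Q_n(x) = Q_n(x) + x\alpha_n$ of Proposition \ref{twisted prop} vanishes identically. Hence
$$\bar k_{Q_n}(C_d(\R^m)) = \dim H^*(\Gr_{d-1}(\R^{m-1}); \Z/2) = \binom{m-1}{d-1},$$
agreeing with the conjecture in this range ($l = 0$).

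For $m > 2^{n+1}$ and even, Theorem \ref{even m properties thm}(b) gives the upper bound essentially for free via duality:
$$\bar k_{Q_n}(C_d(\R^m)) = k_{Q_n}(\Gr_{d-1}(\R^{m-1})).$$
Writing $m = 2^{n+1}+2l$, the value $m-1 = 2^{n+1}-1+2l$ is odd with $\epsilon' = 1$, $l' = l$, so the inductive hypothesis for Conjecture \ref{collapsing conj} at $(n,d-1,m-1)$ yields $k_{Q_n}(\Gr_{d-1}(\R^{m-1})) = \sum_i \binom{2^{n+1}-1}{d-1-2i}\binom{l}{i}$, the conjectured value.

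The main obstacle is odd $m > 2^{n+1}$, where the duality of Theorem \ref{even m properties thm}(b) genuinely fails. Taking $Q_n$--homology of the short exact sequence $0 \to \widetilde H^*(C_d(\R^m)) \to H^*(\Gr_d(\R^m)) \to H^*(\Gr_d(\R^{m-1})) \to 0$ produces a long exact sequence whose dimension count yields
$$\bar k_{Q_n}(C_d(\R^m)) = k_{Q_n}(\Gr_d(\R^m)) - k_{Q_n}(\Gr_d(\R^{m-1})) + 2\dim\im(\delta).$$
Granting Conjecture \ref{collapsing conj} for both Grassmannians by induction, the problem reduces to pinning down $\dim\im(\delta)$. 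For $d=2$, Theorem \ref{d=2 odd m boundary thm} settles this via $p^* = 0$ and $\delta$ surjective, giving $\dim\im(\delta) = 2^{n+1}-2$ and reproducing the formula. For $d \geq 3$ the behavior of $\delta$ is the unresolved point. To attack it, I would attempt either (a) a direct computation of the twisted $\widehat Q_n$--homology on $H^*(\Gr_{d-1}(\R^{m-1}); \Z/2)$ via a modification of Lenart's formula (\ref{formula:Dk}) that tracks multiplication by $\alpha_n = w_{Q_n}(\gamma_{d-1})$, or (b) a geometric construction of explicit representatives of $\widetilde K(n)^*(C_d(\R^m))$--classes arising from the $C_4$--equivariant data (i.e., from $\widetilde K(n-1)^*(C_d(V,W)^{C_4})$) used to prove Theorem \ref{Cd lower bound thm}, which would force the lower bound to be sharp and thereby constrain $\delta$.
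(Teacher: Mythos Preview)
The statement you are addressing is \conjref{Cofib collapsing conj}, which the paper explicitly labels as a \emph{conjecture} and does not prove. There is no proof in the paper to compare your proposal against; the paper only observes that this would follow from \conjref{collapsing conj} (via the long exact sequence argument you outline) and records the implied formula for $k^{\delta}_n(d,m)$ in \lemref{boundary lemma} and \conjref{boundary conj}.

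Your proposal correctly reproduces the partial progress the paper makes: the lower bound from \thmref{Cd lower bound thm}, the base range $m \leq 2^{n+1}$, the even-$m$ case via \thmref{even m properties thm}(b), and the $d=2$ case via \thmref{d=2 odd m boundary thm}. Your identification of the odd-$m$, $d \geq 3$ case as the unresolved point is exactly right, and matches the paper's own assessment in \secref{speculation section}: ``for $d \geq 3$, the full behavior of the connecting map \ldots\ is as yet unclear to the authors.''

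However, your proposal does not actually close the gap. Approach (a) --- modifying Lenart's formula to compute twisted $\widehat Q_n$-homology --- is a plausible line of attack but not a proof; the paper's authors presumably considered this and found it intractable beyond $d=2$. Approach (b) --- lifting $K(n-1)$-classes from the $C_4$-fixed-point cofiber to $K(n)$-classes on $C_d(\R^m)$ --- is not supported by any mechanism in the paper: \thmref{chromatic fixed point thm} gives only an inequality, not a construction of classes, so there is no evident way to ``force the lower bound to be sharp'' from the equivariant data alone. In short, your writeup is an accurate summary of what is known and where the difficulty lies, but it is a proof \emph{strategy} for an open conjecture rather than a proof.
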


Our various conjectures imply a conjecture about the behavior of the boundary map
$$\delta: H^*(\Gr_d(\R^{m-1});Q_n) \ra  \widetilde H^{*+2^{n+1}-1}(C_d(\R^{m});Q_n),$$
when $m=2^{n+1}-\epsilon +2l$.  Let $k^{\delta}_n(d,m)$ denote the dimension of the image of this map.

\conjref{collapsing conj} says that $k_{Q_n}(\Gr_d(\R^m)) = k^G_n(d,m)$, where
$$k^G_n(d,m) = \sum_i \binom{2^{n+1}-\epsilon}{d-2i}\binom{l}{i}.$$

\conjref{Cofib collapsing conj} similarly says that $\bar k_{Q_n}(C_d(\R^m)) = \bar k^C_n(d,m)$, where $$\bar k^C_n(d,m) = \sum_i \binom{2^{n+1}-1-\epsilon}{d-1-2i}\binom{l}{i}.$$

If these conjectures are true, then the exactness of the $Q_n$--homology long exact sequence would imply that
$$ k^G_n(d,m) + 2 k^{\delta}_n(d,m) = k^G_n(d,m-1) + \bar k^C_n(d,m),$$
so that
$$  k^{\delta}_n(d,m) = \frac{1}{2} \left [k^G_n(d,m-1)  + \bar k^C_n(d,m) - k^G_n(d,m)\right ].$$

As expected, the right hand side here is zero if $m$ is even, i.e. $\epsilon = 0$.

When $m$ is odd, so $\epsilon = 1$, the right hand side is not zero, but can be rearranged as in the following lemma.

\begin{lem} \label{boundary lemma}  If $m=2^{n+1}-1+2l$ and $l>0$, then
$$ \frac{1}{2} \left [k^G_n(d,m-1)  + \bar k^C_n(d,m) - k^G_n(d,m)\right ] = \sum_i \binom{2^{n+1}-2}{d-1-2i}\binom{l-1}{i}.$$
\end{lem}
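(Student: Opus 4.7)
The plan is to reduce the identity to a short polynomial computation via generating functions. Setting $N = 2^{n+1}$, the key observation is that for any nonnegative integers $A$ and $B$,
$$\sum_i \binom{A}{e-2i}\binom{B}{i} \;=\; [x^e]\,(1+x)^A (1+x^2)^B,$$
with $(1+x)^A$ tracking the first binomial and $(1+x^2)^B$ tracking the second. The hypothesis $m = N-1+2l$ with $l > 0$ forces $m-1 = N + 2(l-1)$, so that $m-1$ takes the form $2^{n+1}-\epsilon' + 2l'$ with $\epsilon' = 0$ and $l' = l-1$, while $m$ itself has $\epsilon = 1$. First I would rewrite each of the three quantities appearing on the left-hand side as
\begin{align*}
k^G_n(d,m-1) &= [x^d]\,(1+x)^{N} (1+x^2)^{l-1}, \\
k^G_n(d,m) &= [x^d]\,(1+x)^{N-1}(1+x^2)^{l}, \\
\bar k^C_n(d,m) &= [x^{d-1}]\,(1+x)^{N-2}(1+x^2)^{l},
\end{align*}
and the right-hand side of the claimed identity as $[x^{d-1}]\,(1+x)^{N-2}(1+x^2)^{l-1}$.

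Extracting $[x^d]$ throughout (and using the rewriting $\bar k^C_n(d,m) = [x^d]\,x(1+x)^{N-2}(1+x^2)^l$ to line up exponents), the lemma will follow once I verify the polynomial identity
$$(1+x)^N (1+x^2)^{l-1} + x(1+x)^{N-2}(1+x^2)^{l} - (1+x)^{N-1}(1+x^2)^{l} \;=\; 2x(1+x)^{N-2}(1+x^2)^{l-1}.$$
Since $l > 0$, the factor $(1+x)^{N-2}(1+x^2)^{l-1}$ can be pulled out of every term, and this reduces to the scalar check
$$(1+x)^2 + x(1+x^2) - (1+x)(1+x^2) \;=\; 2x,$$
which expands to $(1 + 2x + x^2) + (x + x^3) - (1 + x + x^2 + x^3) = 2x$ and is immediate.

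There is no real obstacle: the essence of the lemma is the observation that the three generating functions involved admit a common factor $(1+x)^{N-2}(1+x^2)^{l-1}$, after which the claim becomes a two-line algebraic identity valid in $\Z[x]$. If one preferred a more direct combinatorial approach, the same identity can be proved by repeated application of Pascal's rule to rewrite every binomial in terms of $\binom{N-2}{?}\binom{l-1}{?}$ and checking that all contributions cancel except for $2\binom{N-2}{d-1-2i}\binom{l-1}{i}$, but factoring the generating functions is noticeably cleaner.
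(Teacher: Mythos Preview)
Your proof is correct. The generating-function identity $\sum_i \binom{A}{e-2i}\binom{B}{i} = [x^e]\,(1+x)^A(1+x^2)^B$ is standard, your identifications of the three quantities with coefficients of the appropriate polynomials are right (including the careful reading that $m-1 = 2^{n+1} + 2(l-1)$ has $\epsilon'=0$, $l'=l-1$), and the reduction to the one-line check $(1+x)^2 + x(1+x^2) - (1+x)(1+x^2) = 2x$ is clean and valid over $\Z[x]$.

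The paper takes precisely the alternative route you mention at the end: it works directly with the binomial sums and applies Pascal's rule repeatedly. Specifically, it expands $\binom{2^{n+1}}{d-2i}$ as $\binom{2^{n+1}-2}{d-2i} + 2\binom{2^{n+1}-2}{d-1-2i} + \binom{2^{n+1}-2}{d-2-2i}$, simplifies $k^G_n(d,m) - \bar k^C_n(d,m)$ to $\sum_i \binom{2^{n+1}-2}{d-2i}\binom{l}{i}$ via one Pascal step, then splits $\binom{l}{i} = \binom{l-1}{i} + \binom{l-1}{i-1}$ and reindexes so that everything cancels except the desired $2\binom{2^{n+1}-2}{d-1-2i}\binom{l-1}{i}$. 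Your generating-function argument packages all of these Pascal manipulations into the single factorization by $(1+x)^{N-2}(1+x^2)^{l-1}$, which makes the cancellation structure transparent and avoids the index bookkeeping; the paper's approach is more hands-on but requires no auxiliary machinery. Both are short; yours scales better if one ever needed analogous identities with more terms.
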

\begin{proof}  We expand $k^G_n(d,m-1)$:
\begin{equation*}
\begin{split}
k^G_n(d,m-1) &
 = \sum_i \binom{2^{n+1}}{d-2i}\binom{l-1}{i} \\
  & = \sum_i \left [\binom{2^{n+1}-2}{d-2i}+ 2\binom{2^{n+1}-2}{d-1-2i}+\binom{2^{n+1}-2}{d-2-2i}\right ]\binom{l-1}{i}.
\end{split}
\end{equation*}

We rewrite $k^G_n(d,m) - \bar k^C_n(d,m)$:
\begin{equation*}
\begin{split}
k^G_n(d,m) - \bar k^C_n(d,m) &
 = \sum_i \left [\binom{2^{n+1}-1}{d-2i}-\binom{2^{n+1}-2}{d-1-2i}\right ]\binom{l}{i} \\
 & = \sum_i \binom{2^{n+1}-2}{d-2i}\binom{l}{i} \\
 & = \sum_i \binom{2^{n+1}-2}{d-2i}\left [\binom{l-1}{i}+ \binom{l-1}{i-1}\right ]\\
 & = \sum_i \left [\binom{2^{n+1}-2}{d-2i}+\binom{2^{n+1}-2}{d-2-2i}\right ]\binom{l-1}{i}.
\end{split}
\end{equation*}

Subtracting our second expression from the first, and dividing by two, proves the lemma.
\end{proof}

Thus we can add the following to our conjectures.
\begin{conj} \label{boundary conj} If $m=2^{n+1}-1+2l$ and $l>0$, then
$$ k^{\delta}_n(d,m) = \sum_i \binom{2^{n+1}-2}{d-1-2i}\binom{l-1}{i}.$$
\end{conj}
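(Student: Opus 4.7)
The plan is to deduce \conjref{boundary conj} as a formal consequence of \conjref{collapsing conj} and \conjref{Cofib collapsing conj} via the $Q_n$--homology long exact sequence. Concretely, the cofiber sequence $\Gr_d(\R^{m-1}) \xra{i} \Gr_d(\R^m) \xra{p} C_d(\R^m)$ induces a short exact sequence of $\A$--modules, and hence a six--term exact sequence on $Q_n$--homology of the form
$$ \cdots \ra \widetilde H^{*-2^{n+1}+1}(C_d(\R^m);Q_n) \xra{p^*} H^*(\Gr_d(\R^m);Q_n) \xra{i^*} H^*(\Gr_d(\R^{m-1});Q_n) \xra{\delta} \widetilde H^{*+2^{n+1}-1}(C_d(\R^m);Q_n) \ra \cdots$$
Summing total dimensions in this periodic exact sequence and using exactness yields the identity
$$k_{Q_n}(\Gr_d(\R^m)) + 2\,k^{\delta}_n(d,m) = k_{Q_n}(\Gr_d(\R^{m-1})) + \bar k_{Q_n}(C_d(\R^m)),$$
where $k^{\delta}_n(d,m)$ denotes the rank of $\delta$.

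Next, I would invoke \conjref{collapsing conj} to replace $k_{Q_n}(\Gr_d(\R^m))$ by $k^G_n(d,m)$ and $k_{Q_n}(\Gr_d(\R^{m-1}))$ by $k^G_n(d,m-1)$, and \conjref{Cofib collapsing conj} to replace $\bar k_{Q_n}(C_d(\R^m))$ by $\bar k^C_n(d,m)$. Solving for $k^{\delta}_n(d,m)$ gives the expression
$$ k^{\delta}_n(d,m) = \frac{1}{2}\left[k^G_n(d,m-1) + \bar k^C_n(d,m) - k^G_n(d,m)\right],$$
and then \lemref{boundary lemma} rewrites the right--hand side precisely as $\sum_i \binom{2^{n+1}-2}{d-1-2i}\binom{l-1}{i}$, concluding the derivation. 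This is the cleanest route, since the combinatorial identity has already been extracted into \lemref{boundary lemma}.

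The genuine obstacle is therefore not the derivation above but the two feeder conjectures: for odd $m$ and $d\geq 3$ we lack both analogues of \thmref{even m properties thm}(a)--(b) and a direct handle on $\delta$. A more ambitious alternative plan, which could potentially bypass \conjref{collapsing conj} and \conjref{Cofib collapsing conj}, would be to produce explicit families of classes in $H^*(\Gr_d(\R^{m-1});Q_n)$ whose $\delta$--images span a subspace of the predicted dimension in $\widetilde H^{*+2^{n+1}-1}(C_d(\R^m);Q_n)$, in the spirit of the computations of \propref{a prop} and \propref{b prop}. Combined with \thmref{Cd lower bound thm} and the $K(n)^*$--lower bound of \thmref{lower bound thm} via the inequalities $k_{Q_n} \geq k_n \geq k^G_n(d,m)$ and $\bar k_{Q_n} \geq \bar k_n \geq \bar k^C_n(d,m)$, a surjectivity/injectivity bookkeeping on $\delta$ could in principle pin down $k^{\delta}_n(d,m)$ directly. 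The hardest step in either plan is the same: identifying enough natural cycles in $H^*(\Gr_d(\R^{m-1});\Z/2)$ (e.g.\ via Schubert classes or Stiefel--Whitney polynomials) whose $Q_n$--boundary in $\widetilde H^*(C_d(\R^m);\Z/2)$ is nontrivial mod $\im Q_n$.
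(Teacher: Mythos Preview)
Your derivation is exactly the paper's: the paper does not prove \conjref{boundary conj} unconditionally, but presents it as a formal consequence of \conjref{collapsing conj} and \conjref{Cofib collapsing conj} via the $Q_n$--homology long exact sequence identity $k^G_n(d,m) + 2k^{\delta}_n(d,m) = k^G_n(d,m-1) + \bar k^C_n(d,m)$, followed by \lemref{boundary lemma}. You have reproduced that reasoning and correctly flagged that the actual obstruction lies in the feeder conjectures, not in the bookkeeping.
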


\begin{ex}  Suppose that $n=0$, so $m=2l+1$.  \conjref{Cofib collapsing conj} predicts that
\begin{equation*}
\dim_{\Q} H^*(C_d(\R^{2l+1});\Q) =
\begin{cases}
0 & \text{if } d \text{ is even} \\ \binom{l}{c} & \text{if } d=2c+1.
\end{cases}
\end{equation*}
Similarly, \conjref{boundary conj} predicts that
\begin{equation*}
k^{\delta}_0(d,2l+1) =
\begin{cases}
0 & \text{if } d \text{ is even} \\ \binom{l-1}{c} & \text{if } d=2c+1.
\end{cases}
\end{equation*}

Noting that $k^{\delta}_0(d,2l+1)$ can be viewed as the dimension of the cokernel of the map
$$ i^*: H^*(\Gr_{d}(\R^{2l+1});\Q) \ra H^*(\Gr_{d}(\R^{2l});\Q),$$
one can check that our conjectures do correspond to the known behavior of $i^*$ -- it takes Pontryagin classes to Pontryagin classes -- together with the computations
\begin{equation*}
\dim_{\Q} H^*(\Gr_{d}(\R^{m});\Q) =
\begin{cases}
\binom{l}{c} & \text{if } m=2l+1 \text{ and } d=2c \text{ or } 2c+1 \\ 2 \binom{l-1}{c} & \text{if }  m=2l \text{ and } d=2c+1.
\end{cases}
\end{equation*}
\end{ex}

\appendix
\newpage
\section{Tables}
In this appendix we present some tables of calculations that support
Conjecture~\ref{collapsing conj}.   For larger Grassmannians the main
obstacle to testing the conjecture is the sheer volume of calculations
needed. To expedite these calculations the authors used the University
of Virginia Rivanna High Performance computing system. The white cells are the
conjectured values which have not been checked due to computational
limitations. The tables are necessarily symmetric in \(c\) and \(d\).

\newcommand{\tablekey}[2]{\fcolorbox{black}{#1}{\phantom{x}} #2}
\newcommand{\tablelegend}{{\small \medskip
\begin{tabular}{l l l l}
 \tablekey{cyan}{\(cd \le 2^{n+1}-1\)}  &
\tablekey{gray}{Projective Spaces} &
\tablekey{yellow}{\thmref{d=2 thm}} \\
\tablekey{light-gray}{Conjecture Verified}  &
\tablekey{white}{Conjecture} &
\tablekey{green}{\thmref{collapse thm}}
\end{tabular}
}}
\definecolor{light-gray}{gray}{0.80}
{\begin{center} \(k_{1}(\Gr_d(\R^{d+c}))\)\\ \medskip {\tiny
\begin{tabular}{|c|ccccccccccc|}\hline
\diagbox{$d$}{$c$} & 1 & 2 & 3 & 4 & 5 & 6 & 7 & 8 & 9 & 10 & 11\\ \hline
1 & \cellcolor{cyan}2 & \cellcolor{cyan}3 & \cellcolor{cyan}4 & \cellcolor{gray}3 & \cellcolor{gray}4 & \cellcolor{gray}3 & \cellcolor{gray}4 & \cellcolor{gray}3 & \cellcolor{gray}4 & \cellcolor{gray}3 & \cellcolor{gray}4\\
2 & \cellcolor{cyan}3 & \cellcolor{yellow}6 & \cellcolor{yellow}4 & \cellcolor{yellow}7 & \cellcolor{yellow}5 & \cellcolor{yellow}8 & \cellcolor{yellow}6 & \cellcolor{yellow}9 & \cellcolor{yellow}7 & \cellcolor{yellow}10 & \cellcolor{yellow}8\\
3 & \cellcolor{cyan}4 & \cellcolor{yellow}4 & \cellcolor{light-gray}8 & \cellcolor{light-gray}7 & \cellcolor{light-gray}12 & \cellcolor{light-gray}10 & \cellcolor{light-gray}16 & \cellcolor{light-gray}13 & \cellcolor{light-gray}20 & \cellcolor{light-gray}16 & \cellcolor{light-gray}24\\
4 & \cellcolor{gray}3 & \cellcolor{yellow}7 & \cellcolor{light-gray}7 & \cellcolor{light-gray}14 & \cellcolor{light-gray}12 & \cellcolor{light-gray}22 & \cellcolor{light-gray}18 & \cellcolor{light-gray}31 & \cellcolor{light-gray}25 & \cellcolor{light-gray}41 & \cellcolor{light-gray}33\\
5 & \cellcolor{gray}4 & \cellcolor{yellow}5 & \cellcolor{light-gray}12 & \cellcolor{light-gray}12 & \cellcolor{light-gray}24 & \cellcolor{light-gray}22 & \cellcolor{light-gray}40 & \cellcolor{light-gray}35 & \cellcolor{light-gray}60 & \cellcolor{light-gray}51 & \cellcolor{light-gray}84\\
6 & \cellcolor{gray}3 & \cellcolor{yellow}8 & \cellcolor{light-gray}10 & \cellcolor{light-gray}22 & \cellcolor{light-gray}22 & \cellcolor{light-gray}44 & \cellcolor{light-gray}40 & \cellcolor{light-gray}75 & \cellcolor{light-gray}65 & \cellcolor{light-gray}116 & \cellcolor{light-gray}98\\
7 & \cellcolor{gray}4 & \cellcolor{yellow}6 & \cellcolor{light-gray}16 & \cellcolor{light-gray}18 & \cellcolor{light-gray}40 & \cellcolor{light-gray}40 & \cellcolor{light-gray}80 & \cellcolor{light-gray}75 & \cellcolor{light-gray}140 & \cellcolor{light-gray}126 & \cellcolor{light-gray}224\\
8 & \cellcolor{gray}3 & \cellcolor{yellow}9 & \cellcolor{light-gray}13 & \cellcolor{light-gray}31 & \cellcolor{light-gray}35 & \cellcolor{light-gray}75 & \cellcolor{light-gray}75 & \cellcolor{light-gray}150 & \cellcolor{light-gray}140 & \cellcolor{light-gray}266 & \cellcolor{light-gray}238\\
9 & \cellcolor{gray}4 & \cellcolor{yellow}7 & \cellcolor{light-gray}20 & \cellcolor{light-gray}25 & \cellcolor{light-gray}60 & \cellcolor{light-gray}65 & \cellcolor{light-gray}140 & \cellcolor{light-gray}140 & \cellcolor{light-gray}280 & \cellcolor{light-gray}266 & \cellcolor{light-gray}504\\
10 & \cellcolor{gray}3 & \cellcolor{yellow}10 & \cellcolor{light-gray}16 & \cellcolor{light-gray}41 & \cellcolor{light-gray}51 & \cellcolor{light-gray}116 & \cellcolor{light-gray}126 & \cellcolor{light-gray}266 & \cellcolor{light-gray}266 & \cellcolor{light-gray}532 & \cellcolor{light-gray}504\\
11 & \cellcolor{gray}4 & \cellcolor{yellow}8 & \cellcolor{light-gray}24 & \cellcolor{light-gray}33 & \cellcolor{light-gray}84 & \cellcolor{light-gray}98 & \cellcolor{light-gray}224 & \cellcolor{light-gray}238 & \cellcolor{light-gray}504 & \cellcolor{light-gray}504 & 1008\\
12 & \cellcolor{gray}3 & \cellcolor{yellow}11 & \cellcolor{light-gray}19 & \cellcolor{light-gray}52 & \cellcolor{light-gray}70 & \cellcolor{light-gray}168 & \cellcolor{light-gray}196 & \cellcolor{light-gray}434 & \cellcolor{light-gray}462 & 966 & 966\\
13 & \cellcolor{gray}4 & \cellcolor{yellow}9 & \cellcolor{light-gray}28 & \cellcolor{light-gray}42 & \cellcolor{light-gray}112 & \cellcolor{light-gray}140 & \cellcolor{light-gray}336 & \cellcolor{light-gray}378 & \cellcolor{light-gray}840 & 882 & 1848\\
14 & \cellcolor{gray}3 & \cellcolor{yellow}12 & \cellcolor{light-gray}22 & \cellcolor{light-gray}64 & \cellcolor{light-gray}92 & \cellcolor{light-gray}232 & \cellcolor{light-gray}288 & \cellcolor{light-gray}666 & \cellcolor{light-gray}750 & 1632 & 1716\\
15 & \cellcolor{gray}4 & \cellcolor{yellow}10 & \cellcolor{light-gray}32 & \cellcolor{light-gray}52 & \cellcolor{light-gray}144 & \cellcolor{light-gray}192 & 480 & 570 & 1320 & 1452 & 3168\\
16 & \cellcolor{gray}3 & \cellcolor{yellow}13 & \cellcolor{light-gray}25 & \cellcolor{light-gray}77 & \cellcolor{light-gray}117 & \cellcolor{light-gray}309 & 405 & 975 & 1155 & 2607 & 2871\\
17 & \cellcolor{gray}4 & \cellcolor{yellow}11 & \cellcolor{light-gray}36 & \cellcolor{light-gray}63 & \cellcolor{light-gray}180 & \cellcolor{light-gray}255 & 660 & 825 & 1980 & 2277 & 5148\\
18 & \cellcolor{gray}3 & \cellcolor{yellow}14 & \cellcolor{light-gray}28 & \cellcolor{light-gray}91 & \cellcolor{light-gray}145 & \cellcolor{light-gray}400 & 550 & 1375 & 1705 & 3982 & 4576\\
19 & \cellcolor{gray}4 & \cellcolor{yellow}12 & \cellcolor{light-gray}40 & \cellcolor{light-gray}75 & \cellcolor{light-gray}220 & \cellcolor{light-gray}330 & 880 & 1155 & 2860 & 3432 & 8008\\
20 & \cellcolor{gray}3 & \cellcolor{yellow}15 & \cellcolor{light-gray}31 & \cellcolor{light-gray}106 & \cellcolor{light-gray}176 & \cellcolor{light-gray}506 & 726 & 1881 & 2431 & 5863 & 7007\\
21 & \cellcolor{gray}4 & \cellcolor{yellow}13 & \cellcolor{light-gray}44 & \cellcolor{light-gray}88 & \cellcolor{light-gray}264 & \cellcolor{light-gray}418 & 1144 & 1573 & 4004 & 5005 & 12012\\
22 & \cellcolor{gray}3 & \cellcolor{yellow}16 & \cellcolor{light-gray}34 & \cellcolor{light-gray}122 & \cellcolor{light-gray}210 & 628 & 936 & 2509 & 3367 & 8372 & 10374\\
23 & \cellcolor{gray}4 & \cellcolor{yellow}14 & \cellcolor{light-gray}48 & \cellcolor{light-gray}102 & \cellcolor{light-gray}312 & 520 & 1456 & 2093 & 5460 & 7098 & 17472\\
24 & \cellcolor{gray}3 & \cellcolor{yellow}17 & \cellcolor{light-gray}37 & \cellcolor{light-gray}139 & \cellcolor{light-gray}247 & 767 & 1183 & 3276 & 4550 & 11648 & 14924\\
25 & \cellcolor{gray}4 & \cellcolor{yellow}15 & \cellcolor{light-gray}52 & \cellcolor{light-gray}117 & \cellcolor{light-gray}364 & 637 & 1820 & 2730 & 7280 & 9828 & 24752\\
26 & \cellcolor{gray}3 & \cellcolor{yellow}18 & \cellcolor{light-gray}40 & \cellcolor{light-gray}157 & \cellcolor{light-gray}287 & 924 & 1470 & 4200 & 6020 & 15848 & 20944\\
27 & \cellcolor{gray}4 & \cellcolor{yellow}16 & \cellcolor{light-gray}56 & \cellcolor{light-gray}133 & \cellcolor{light-gray}420 & 770 & 2240 & 3500 & 9520 & 13328 & 34272\\
28 & \cellcolor{gray}3 & \cellcolor{yellow}19 & \cellcolor{light-gray}43 & \cellcolor{light-gray}176 & \cellcolor{light-gray}330 & 1100 & 1800 & 5300 & 7820 & 21148 & 28764\\
29 & \cellcolor{gray}4 & \cellcolor{yellow}17 & \cellcolor{light-gray}60 & \cellcolor{light-gray}150 & \cellcolor{light-gray}480 & 920 & 2720 & 4420 & 12240 & 17748 & 46512\\
30 & \cellcolor{gray}3 & \cellcolor{yellow}20 & \cellcolor{light-gray}46 & \cellcolor{light-gray}196 & \cellcolor{light-gray}376 & 1296 & 2176 & 6596 & 9996 & 27744 & 38760\\
31 & \cellcolor{gray}4 & \cellcolor{yellow}18 & \cellcolor{light-gray}64 & \cellcolor{light-gray}168 & \cellcolor{light-gray}544 & 1088 & 3264 & 5508 & 15504 & 23256 & 62016\\
32 & \cellcolor{gray}3 & \cellcolor{yellow}21 & \cellcolor{light-gray}49 & \cellcolor{light-gray}217 & \cellcolor{light-gray}425 & 1513 & 2601 & 8109 & 12597 & 35853 & 51357\\
33 & \cellcolor{gray}4 & \cellcolor{yellow}19 & \cellcolor{light-gray}68 & \cellcolor{light-gray}187 & \cellcolor{light-gray}612 & 1275 & 3876 & 6783 & 19380 & 30039 & 81396\\
34 & \cellcolor{gray}3 & \cellcolor{yellow}22 & \cellcolor{light-gray}52 & \cellcolor{light-gray}239 & 477 & 1752 & 3078 & 9861 & 15675 & 45714 & 67032\\
35 & \cellcolor{gray}4 & \cellcolor{yellow}20 & \cellcolor{light-gray}72 & \cellcolor{light-gray}207 & 684 & 1482 & 4560 & 8265 & 23940 & 38304 & 105336\\
36 & \cellcolor{gray}3 & \cellcolor{yellow}23 & \cellcolor{light-gray}55 & \cellcolor{light-gray}262 & 532 & 2014 & 3610 & 11875 & 19285 & 57589 & 86317\\
37 & \cellcolor{gray}4 & \cellcolor{yellow}21 & \cellcolor{light-gray}76 & \cellcolor{light-gray}228 & 760 & 1710 & 5320 & 9975 & 29260 & 48279 & 134596\\
38 & \cellcolor{gray}3 & \cellcolor{yellow}24 & \cellcolor{light-gray}58 & \cellcolor{light-gray}286 & 590 & 2300 & 4200 & 14175 & 23485 & 71764 & 109802\\
39 & \cellcolor{gray}4 & \cellcolor{yellow}22 & \cellcolor{light-gray}80 & \cellcolor{light-gray}250 & 840 & 1960 & 6160 & 11935 & 35420 & 60214 & 170016\\
40 & \cellcolor{gray}3 & \cellcolor{yellow}25 & \cellcolor{light-gray}61 & \cellcolor{light-gray}311 & 651 & 2611 & 4851 & 16786 & 28336 & 88550 & 138138\\
41 & \cellcolor{gray}4 & \cellcolor{yellow}23 & \cellcolor{light-gray}84 & \cellcolor{light-gray}273 & 924 & 2233 & 7084 & 14168 & 42504 & 74382 & 212520\\
42 & \cellcolor{gray}3 & \cellcolor{yellow}26 & \cellcolor{light-gray}64 & \cellcolor{light-gray}337 & 715 & 2948 & 5566 & 19734 & 33902 & 108284 & 172040\\
43 & \cellcolor{gray}4 & \cellcolor{yellow}24 & \cellcolor{light-gray}88 & \cellcolor{light-gray}297 & 1012 & 2530 & 8096 & 16698 & 50600 & 91080 & 263120\\
44 & \cellcolor{gray}3 & \cellcolor{yellow}27 & \cellcolor{light-gray}67 & \cellcolor{light-gray}364 & 782 & 3312 & 6348 & 23046 & 40250 & 131330 & 212290\\
45 & \cellcolor{gray}4 & \cellcolor{yellow}25 & \cellcolor{light-gray}92 & \cellcolor{light-gray}322 & 1104 & 2852 & 9200 & 19550 & 59800 & 110630 & 322920\\
46 & \cellcolor{gray}3 & \cellcolor{yellow}28 & \cellcolor{light-gray}70 & \cellcolor{light-gray}392 & 852 & 3704 & 7200 & 26750 & 47450 & 158080 & 259740\\
47 & \cellcolor{gray}4 & \cellcolor{yellow}26 & \cellcolor{light-gray}96 & \cellcolor{light-gray}348 & 1200 & 3200 & 10400 & 22750 & 70200 & 133380 & 393120\\
48 & \cellcolor{gray}3 & \cellcolor{yellow}29 & \cellcolor{light-gray}73 & \cellcolor{light-gray}421 & 925 & 4125 & 8125 & 30875 & 55575 & 188955 & 315315\\
49 & \cellcolor{gray}4 & \cellcolor{yellow}27 & \cellcolor{light-gray}100 & \cellcolor{light-gray}375 & 1300 & 3575 & 11700 & 26325 & 81900 & 159705 & 475020\\
50 & \cellcolor{gray}3 & \cellcolor{yellow}30 & \cellcolor{light-gray}76 & \cellcolor{light-gray}451 & 1001 & 4576 & 9126 & 35451 & 64701 & 224406 & 380016\\
\hline \end{tabular}}\end{center}

\medskip
\begin{center}
\begin{tabular}{l l l l}
 \tablekey{cyan}{\(cd \le 2^{n+1}-1\)}  &
\tablekey{gray}{Projective Spaces} &
\tablekey{yellow}{\thmref{d=2 thm}} \\
\tablekey{light-gray}{Conjecture Verified}  &
\tablekey{white}{Conjecture}
\end{tabular}
\end{center} \newpage
\definecolor{light-gray}{gray}{0.80}

{\begin{center} \(k_{2}(\Gr_d(\R^{d+c}))\)\\ \medskip {\tiny
\begin{tabular}{|c|ccccccccccc|}\hline
\diagbox{$d$}{$c$} & 1 & 2 & 3 & 4 & 5 & 6 & 7 & 8 & 9 & 10 & 11\\ \hline
1 & \cellcolor{cyan}2 & \cellcolor{cyan}3 & \cellcolor{cyan}4 & \cellcolor{cyan}5 & \cellcolor{cyan}6 & \cellcolor{cyan}7 & \cellcolor{cyan}8 & \cellcolor{gray}7 & \cellcolor{gray}8 & \cellcolor{gray}7 & \cellcolor{gray}8\\
2 & \cellcolor{cyan}3 & \cellcolor{cyan}6 & \cellcolor{cyan}10 & \cellcolor{green}15 & \cellcolor{green}21 & \cellcolor{yellow}28 & \cellcolor{yellow}22 & \cellcolor{yellow}29 & \cellcolor{yellow}23 & \cellcolor{yellow}30 & \cellcolor{yellow}24\\
3 & \cellcolor{cyan}4 & \cellcolor{cyan}10 & \cellcolor{green}20 & \cellcolor{green}35 & \cellcolor{light-gray}56 & \cellcolor{light-gray}42 & \cellcolor{light-gray}64 & \cellcolor{light-gray}49 & \cellcolor{light-gray}72 & \cellcolor{light-gray}56 & \cellcolor{light-gray}80\\
4 & \cellcolor{cyan}5 & \cellcolor{green}15 & \cellcolor{green}35 & \cellcolor{light-gray}70 & \cellcolor{light-gray}56 & \cellcolor{light-gray}98 & \cellcolor{light-gray}78 & \cellcolor{light-gray}127 & \cellcolor{light-gray}101 & \cellcolor{light-gray}157 & \cellcolor{light-gray}125\\
5 & \cellcolor{cyan}6 & \cellcolor{green}21 & \cellcolor{light-gray}56 & \cellcolor{light-gray}56 & \cellcolor{light-gray}112 & \cellcolor{light-gray}98 & \cellcolor{light-gray}176 & \cellcolor{light-gray}147 & \cellcolor{light-gray}248 & \cellcolor{light-gray}203 & \cellcolor{light-gray}328\\
6 & \cellcolor{cyan}7 & \cellcolor{yellow}28 & \cellcolor{light-gray}42 & \cellcolor{light-gray}98 & \cellcolor{light-gray}98 & \cellcolor{light-gray}196 & \cellcolor{light-gray}176 & \cellcolor{light-gray}323 & \cellcolor{light-gray}277 & \cellcolor{light-gray}480 & \cellcolor{light-gray}402\\
7 & \cellcolor{cyan}8 & \cellcolor{yellow}22 & \cellcolor{light-gray}64 & \cellcolor{light-gray}78 & \cellcolor{light-gray}176 & \cellcolor{light-gray}176 & 352 & 323 & 600 & 526 & 928\\
8 & \cellcolor{gray}7 & \cellcolor{yellow}29 & \cellcolor{light-gray}49 & \cellcolor{light-gray}127 & \cellcolor{light-gray}147 & \cellcolor{light-gray}323 & 323 & 646 & 600 & 1126 & 1002\\
9 & \cellcolor{gray}8 & \cellcolor{yellow}23 & \cellcolor{light-gray}72 & \cellcolor{light-gray}101 & \cellcolor{light-gray}248 & \cellcolor{light-gray}277 & 600 & 600 & 1200 & 1126 & 2128\\
10 & \cellcolor{gray}7 & \cellcolor{yellow}30 & \cellcolor{light-gray}56 & \cellcolor{light-gray}157 & \cellcolor{light-gray}203 & \cellcolor{light-gray}480 & 526 & 1126 & 1126 & 2252 & 2128\\
11 & \cellcolor{gray}8 & \cellcolor{yellow}24 & \cellcolor{light-gray}80 & \cellcolor{light-gray}125 & \cellcolor{light-gray}328 & \cellcolor{light-gray}402 & 928 & 1002 & 2128 & 2128 & 4256\\
12 & \cellcolor{gray}7 & \cellcolor{yellow}31 & \cellcolor{light-gray}63 & \cellcolor{light-gray}188 & \cellcolor{light-gray}266 & \cellcolor{light-gray}668 & 792 & 1794 & 1918 & 4046 & 4046\\
13 & \cellcolor{gray}8 & \cellcolor{yellow}25 & \cellcolor{light-gray}88 & \cellcolor{light-gray}150 & \cellcolor{light-gray}416 & \cellcolor{light-gray}552 & 1344 & 1554 & 3472 & 3682 & 7728\\
14 & \cellcolor{gray}7 & \cellcolor{yellow}32 & \cellcolor{light-gray}70 & \cellcolor{light-gray}220 & \cellcolor{light-gray}336 & \cellcolor{light-gray}888 & 1128 & 2682 & 3046 & 6728 & 7092\\
15 & \cellcolor{gray}8 & \cellcolor{yellow}26 & \cellcolor{light-gray}96 & \cellcolor{light-gray}176 & \cellcolor{light-gray}512 & \cellcolor{light-gray}728 & 1856 & 2282 & 5328 & 5964 & 13056\\
16 & \cellcolor{gray}7 & \cellcolor{yellow}33 & \cellcolor{light-gray}77 & \cellcolor{light-gray}253 & \cellcolor{light-gray}413 & \cellcolor{light-gray}1141 & 1541 & 3823 & 4587 & 10551 & 11679\\
17 & \cellcolor{gray}8 & \cellcolor{yellow}27 & \cellcolor{light-gray}104 & \cellcolor{light-gray}203 & \cellcolor{light-gray}616 & \cellcolor{light-gray}931 & 2472 & 3213 & 7800 & 9177 & 20856\\
18 & \cellcolor{gray}7 & \cellcolor{yellow}34 & \cellcolor{light-gray}84 & \cellcolor{light-gray}287 & \cellcolor{light-gray}497 & \cellcolor{light-gray}1428 & 2038 & 5251 & 6625 & 15802 & 18304\\
19 & \cellcolor{gray}8 & \cellcolor{yellow}28 & \cellcolor{light-gray}112 & \cellcolor{light-gray}231 & \cellcolor{light-gray}728 & \cellcolor{light-gray}1162 & 3200 & 4375 & 11000 & 13552 & 31856\\
20 & \cellcolor{gray}7 & \cellcolor{yellow}35 & \cellcolor{light-gray}91 & \cellcolor{light-gray}322 & \cellcolor{light-gray}588 & \cellcolor{light-gray}1750 & 2626 & 7001 & 9251 & 22803 & 27555\\
21 & \cellcolor{gray}8 & \cellcolor{yellow}29 & \cellcolor{light-gray}120 & \cellcolor{light-gray}260 & \cellcolor{light-gray}848 & \cellcolor{light-gray}1422 & 4048 & 5797 & 15048 & 19349 & 46904\\
22 & \cellcolor{gray}7 & \cellcolor{yellow}36 & \cellcolor{light-gray}98 & \cellcolor{light-gray}358 & \cellcolor{light-gray}686 & 2108 & 3312 & 9109 & 12563 & 31912 & 40118\\
23 & \cellcolor{gray}8 & \cellcolor{yellow}30 & \cellcolor{light-gray}128 & \cellcolor{light-gray}290 & \cellcolor{light-gray}976 & 1712 & 5024 & 7509 & 20072 & 26858 & 66976\\
24 & \cellcolor{gray}7 & \cellcolor{yellow}37 & \cellcolor{light-gray}105 & \cellcolor{light-gray}395 & \cellcolor{light-gray}791 & 2503 & 4103 & 11612 & 16666 & 43524 & 56784\\
25 & \cellcolor{gray}8 & \cellcolor{yellow}31 & \cellcolor{light-gray}136 & \cellcolor{light-gray}321 & \cellcolor{light-gray}1112 & 2033 & 6136 & 9542 & 26208 & 36400 & 93184\\
26 & \cellcolor{gray}7 & \cellcolor{yellow}38 & \cellcolor{light-gray}112 & \cellcolor{light-gray}433 & \cellcolor{light-gray}903 & 2936 & 5006 & 14548 & 21672 & 58072 & 78456\\
27 & \cellcolor{gray}8 & \cellcolor{yellow}32 & \cellcolor{light-gray}144 & \cellcolor{light-gray}353 & \cellcolor{light-gray}1256 & 2386 & 7392 & 11928 & 33600 & 48328 & 126784\\
28 & \cellcolor{gray}7 & \cellcolor{yellow}39 & \cellcolor{light-gray}119 & \cellcolor{light-gray}472 & \cellcolor{light-gray}1022 & 3408 & 6028 & 17956 & 27700 & 76028 & 106156\\
29 & \cellcolor{gray}8 & \cellcolor{yellow}33 & \cellcolor{light-gray}152 & \cellcolor{light-gray}386 & \cellcolor{light-gray}1408 & 2772 & 8800 & 14700 & 42400 & 63028 & 169184\\
30 & \cellcolor{gray}7 & \cellcolor{yellow}40 & \cellcolor{light-gray}126 & \cellcolor{light-gray}512 & \cellcolor{light-gray}1148 & 3920 & 7176 & 21876 & 34876 & 97904 & 141032\\
31 & \cellcolor{gray}8 & \cellcolor{yellow}34 & \cellcolor{light-gray}160 & \cellcolor{light-gray}420 & \cellcolor{light-gray}1568 & 3192 & 10368 & 17892 & 52768 & 80920 & 221952\\
32 & \cellcolor{gray}7 & \cellcolor{yellow}41 & \cellcolor{light-gray}133 & \cellcolor{light-gray}553 & \cellcolor{light-gray}1281 & 4473 & 8457 & 26349 & 43333 & 124253 & 184365\\
33 & \cellcolor{gray}8 & \cellcolor{yellow}35 & \cellcolor{light-gray}168 & \cellcolor{light-gray}455 & \cellcolor{light-gray}1736 & 3647 & 12104 & 21539 & 64872 & 102459 & 286824\\
34 & \cellcolor{gray}7 & \cellcolor{yellow}42 & \cellcolor{light-gray}140 & \cellcolor{light-gray}595 & \cellcolor{light-gray}1421 & 5068 & 9878 & 31417 & 53211 & 155670 & 237576\\
35 & \cellcolor{gray}8 & \cellcolor{yellow}36 & \cellcolor{light-gray}176 & \cellcolor{light-gray}491 & \cellcolor{light-gray}1912 & 4138 & 14016 & 25677 & 78888 & 128136 & 365712\\
36 & \cellcolor{gray}7 & \cellcolor{yellow}43 & \cellcolor{light-gray}147 & \cellcolor{light-gray}638 & \cellcolor{light-gray}1568 & 5706 & 11446 & 37123 & 64657 & 192793 & 302233\\
37 & \cellcolor{gray}8 & \cellcolor{yellow}37 & \cellcolor{light-gray}184 & \cellcolor{light-gray}528 & \cellcolor{light-gray}2096 & 4666 & 16112 & 30343 & 95000 & 158479 & 460712\\
38 & \cellcolor{gray}7 & \cellcolor{yellow}44 & \cellcolor{light-gray}154 & \cellcolor{light-gray}682 & 1722 & 6388 & 13168 & 43511 & 77825 & 236304 & 380058\\
39 & \cellcolor{gray}8 & \cellcolor{yellow}38 & \cellcolor{light-gray}192 & \cellcolor{light-gray}566 & 2288 & 5232 & 18400 & 35575 & 113400 & 194054 & 574112\\
40 & \cellcolor{gray}7 & \cellcolor{yellow}45 & \cellcolor{light-gray}161 & \cellcolor{light-gray}727 & 1883 & 7115 & 15051 & 50626 & 92876 & 286930 & 472934\\
41 & \cellcolor{gray}8 & \cellcolor{yellow}39 & \cellcolor{light-gray}200 & \cellcolor{light-gray}605 & 2488 & 5837 & 20888 & 41412 & 134288 & 235466 & 708400\\
42 & \cellcolor{gray}7 & \cellcolor{yellow}46 & \cellcolor{light-gray}168 & \cellcolor{light-gray}773 & 2051 & 7888 & 17102 & 58514 & 109978 & 345444 & 582912\\
43 & \cellcolor{gray}8 & \cellcolor{yellow}40 & \cellcolor{light-gray}208 & \cellcolor{light-gray}645 & 2696 & 6482 & 23584 & 47894 & 157872 & 283360 & 866272\\
44 & \cellcolor{gray}7 & \cellcolor{yellow}47 & \cellcolor{light-gray}175 & \cellcolor{light-gray}820 & 2226 & 8708 & 19328 & 67222 & 129306 & 412666 & 712218\\
45 & \cellcolor{gray}8 & \cellcolor{yellow}41 & \cellcolor{light-gray}216 & \cellcolor{light-gray}686 & 2912 & 7168 & 26496 & 55062 & 184368 & 338422 & 1050640\\
46 & \cellcolor{gray}7 & \cellcolor{yellow}48 & \cellcolor{light-gray}182 & \cellcolor{light-gray}868 & 2408 & 9576 & 21736 & 76798 & 151042 & 489464 & 863260\\
47 & \cellcolor{gray}8 & \cellcolor{yellow}42 & \cellcolor{light-gray}224 & \cellcolor{light-gray}728 & 3136 & 7896 & 29632 & 62958 & 214000 & 401380 & 1264640\\
48 & \cellcolor{gray}7 & \cellcolor{yellow}49 & \cellcolor{light-gray}189 & \cellcolor{light-gray}917 & 2597 & 10493 & 24333 & 87291 & 175375 & 576755 & 1038635\\
49 & \cellcolor{gray}8 & \cellcolor{yellow}43 & \cellcolor{light-gray}232 & \cellcolor{light-gray}771 & 3368 & 8667 & 33000 & 71625 & 247000 & 473005 & 1511640\\
50 & \cellcolor{gray}7 & \cellcolor{yellow}50 & \cellcolor{light-gray}196 & \cellcolor{light-gray}967 & 2793 & 11460 & 27126 & 98751 & 202501 & 675506 & 1241136\\
\hline \end{tabular}}

\tablelegend
\end{center}

 \newpage
\definecolor{light-gray}{gray}{0.80}

{\begin{center} \(k_{3}(\Gr_d(\R^{d+c}))\)\\ \medskip {\tiny
\begin{tabular}{|c|ccccccccccc|}\hline
\diagbox{$d$}{$c$} & 1 & 2 & 3 & 4 & 5 & 6 & 7 & 8 & 9 & 10 & 11\\ \hline
1 & \cellcolor{cyan}2 & \cellcolor{cyan}3 & \cellcolor{cyan}4 & \cellcolor{cyan}5 & \cellcolor{cyan}6 & \cellcolor{cyan}7 & \cellcolor{cyan}8 & \cellcolor{cyan}9 & \cellcolor{cyan}10 & \cellcolor{cyan}11 & \cellcolor{cyan}12\\
2 & \cellcolor{cyan}3 & \cellcolor{cyan}6 & \cellcolor{cyan}10 & \cellcolor{cyan}15 & \cellcolor{cyan}21 & \cellcolor{cyan}28 & \cellcolor{cyan}36 & \cellcolor{green}45 & \cellcolor{green}55 & \cellcolor{green}66 & \cellcolor{green}78\\
3 & \cellcolor{cyan}4 & \cellcolor{cyan}10 & \cellcolor{cyan}20 & \cellcolor{cyan}35 & \cellcolor{cyan}56 & \cellcolor{green}84 & \cellcolor{green}120 & \cellcolor{green}165 & \cellcolor{green}220 & \cellcolor{green}286 & \cellcolor{green}364\\
4 & \cellcolor{cyan}5 & \cellcolor{cyan}15 & \cellcolor{cyan}35 & \cellcolor{green}70 & \cellcolor{green}126 & \cellcolor{green}210 & \cellcolor{green}330 & \cellcolor{green}495 & \cellcolor{green}715 & \cellcolor{green}1001 & \cellcolor{green}1365\\
5 & \cellcolor{cyan}6 & \cellcolor{cyan}21 & \cellcolor{cyan}56 & \cellcolor{green}126 & \cellcolor{green}252 & \cellcolor{green}462 & \cellcolor{green}792 & \cellcolor{green}1287 & \cellcolor{green}2002 & \cellcolor{green}3003 & \cellcolor{light-gray}4368\\
6 & \cellcolor{cyan}7 & \cellcolor{cyan}28 & \cellcolor{green}84 & \cellcolor{green}210 & \cellcolor{green}462 & \cellcolor{green}924 & \cellcolor{green}1716 & \cellcolor{green}3003 & \cellcolor{green}5005 & \cellcolor{light-gray}8008 & \cellcolor{light-gray}6370\\
7 & \cellcolor{cyan}8 & \cellcolor{cyan}36 & \cellcolor{green}120 & \cellcolor{green}330 & \cellcolor{green}792 & \cellcolor{green}1716 & \cellcolor{green}3432 & \cellcolor{green}6435 & \cellcolor{light-gray}11440 & \cellcolor{light-gray}9438 & \cellcolor{light-gray}15808\\
8 & \cellcolor{cyan}9 & \cellcolor{green}45 & \cellcolor{green}165 & \cellcolor{green}495 & \cellcolor{green}1287 & \cellcolor{green}3003 & \cellcolor{green}6435 & \cellcolor{light-gray}12870 & \cellcolor{light-gray}11440 & \cellcolor{light-gray}20878 & \cellcolor{light-gray}17810\\
9 & \cellcolor{cyan}10 & \cellcolor{green}55 & \cellcolor{green}220 & \cellcolor{green}715 & \cellcolor{green}2002 & \cellcolor{green}5005 & \cellcolor{light-gray}11440 & \cellcolor{light-gray}11440 & \cellcolor{light-gray}22880 & \cellcolor{light-gray}20878 & \cellcolor{light-gray}38688\\
10 & \cellcolor{cyan}11 & \cellcolor{green}66 & \cellcolor{green}286 & \cellcolor{green}1001 & \cellcolor{green}3003 & \cellcolor{light-gray}8008 & \cellcolor{light-gray}9438 & \cellcolor{light-gray}20878 & \cellcolor{light-gray}20878 & 41756 & 38688\\
11 & \cellcolor{cyan}12 & \cellcolor{green}78 & \cellcolor{green}364 & \cellcolor{green}1365 & \cellcolor{light-gray}4368 & \cellcolor{light-gray}6370 & \cellcolor{light-gray}15808 & \cellcolor{light-gray}17810 & \cellcolor{light-gray}38688 & 38688 & 77376\\
12 & \cellcolor{cyan}13 & \cellcolor{green}91 & \cellcolor{green}455 & \cellcolor{light-gray}1820 & \cellcolor{light-gray}3458 & \cellcolor{light-gray}9828 & \cellcolor{light-gray}12896 & \cellcolor{light-gray}30706 & 33774 & 72462 & 72462\\
13 & \cellcolor{cyan}14 & \cellcolor{green}105 & \cellcolor{light-gray}560 & \cellcolor{light-gray}1470 & \cellcolor{light-gray}4928 & \cellcolor{light-gray}7840 & \cellcolor{light-gray}20736 & \cellcolor{light-gray}25650 & 59424 & 64338 & 136800\\
14 & \cellcolor{cyan}15 & \cellcolor{yellow}120 & \cellcolor{light-gray}470 & \cellcolor{light-gray}1940 & \cellcolor{light-gray}3928 & \cellcolor{light-gray}11768 & 16824 & 42474 & 50598 & 114936 & 123060\\
15 & \cellcolor{cyan}16 & \cellcolor{yellow}106 & \cellcolor{light-gray}576 & \cellcolor{light-gray}1576 & \cellcolor{light-gray}5504 & \cellcolor{light-gray}9416 & 26240 & 35066 & 85664 & 99404 & 222464\\
16 & \cellcolor{gray}15 & \cellcolor{yellow}121 & \cellcolor{light-gray}485 & \cellcolor{light-gray}2061 & \cellcolor{light-gray}4413 & \cellcolor{light-gray}13829 & 21237 & 56303 & 71835 & 171239 & 194895\\
17 & \cellcolor{gray}16 & \cellcolor{yellow}107 & \cellcolor{light-gray}592 & \cellcolor{light-gray}1683 & \cellcolor{light-gray}6096 & \cellcolor{light-gray}11099 & 32336 & 46165 & 118000 & 145569 & 340464\\
18 & \cellcolor{gray}15 & \cellcolor{yellow}122 & \cellcolor{light-gray}500 & \cellcolor{light-gray}2183 & \cellcolor{light-gray}4913 & \cellcolor{light-gray}16012 & 26150 & 72315 & 97985 & 243554 & 292880\\
19 & \cellcolor{gray}16 & \cellcolor{yellow}108 & \cellcolor{light-gray}608 & \cellcolor{light-gray}1791 & \cellcolor{light-gray}6704 & \cellcolor{light-gray}12890 & 39040 & 59055 & 157040 & 204624 & 497504\\
20 & \cellcolor{gray}15 & \cellcolor{yellow}123 & \cellcolor{light-gray}515 & \cellcolor{light-gray}2306 & \cellcolor{light-gray}5428 & \cellcolor{light-gray}18318 & 31578 & 90633 & 129563 & 334187 & 422443\\
21 & \cellcolor{gray}16 & \cellcolor{yellow}109 & \cellcolor{light-gray}624 & \cellcolor{light-gray}1900 & \cellcolor{light-gray}7328 & \cellcolor{light-gray}14790 & 46368 & 73845 & 203408 & 278469 & 700912\\
22 & \cellcolor{gray}15 & \cellcolor{yellow}124 & \cellcolor{light-gray}530 & \cellcolor{light-gray}2430 & \cellcolor{light-gray}5958 & \cellcolor{light-gray}20748 & 37536 & 111381 & 167099 & 445568 & 589542\\
23 & \cellcolor{gray}16 & \cellcolor{yellow}110 & \cellcolor{light-gray}640 & \cellcolor{light-gray}2010 & \cellcolor{light-gray}7968 & 16800 & 54336 & 90645 & 257744 & 369114 & 958656\\
24 & \cellcolor{gray}15 & \cellcolor{yellow}125 & \cellcolor{light-gray}545 & \cellcolor{light-gray}2555 & \cellcolor{light-gray}6503 & 23303 & 44039 & 134684 & 211138 & 580252 & 800680\\
25 & \cellcolor{gray}16 & \cellcolor{yellow}111 & \cellcolor{light-gray}656 & \cellcolor{light-gray}2121 & \cellcolor{light-gray}8624 & 18921 & 62960 & 109566 & 320704 & 478680 & 1279360\\
26 & \cellcolor{gray}15 & \cellcolor{yellow}126 & \cellcolor{light-gray}560 & \cellcolor{light-gray}2681 & \cellcolor{light-gray}7063 & 25984 & 51102 & 160668 & 262240 & 740920 & 1062920\\
27 & \cellcolor{gray}16 & \cellcolor{yellow}112 & \cellcolor{light-gray}672 & \cellcolor{light-gray}2233 & \cellcolor{light-gray}9296 & 21154 & 72256 & 130720 & 392960 & 609400 & 1672320\\
28 & \cellcolor{gray}15 & \cellcolor{yellow}127 & \cellcolor{light-gray}575 & \cellcolor{light-gray}2808 & \cellcolor{light-gray}7638 & 28792 & 58740 & 189460 & 320980 & 930380 & 1383900\\
29 & \cellcolor{gray}16 & \cellcolor{yellow}113 & \cellcolor{light-gray}688 & \cellcolor{light-gray}2346 & 9984 & 23500 & 82240 & 154220 & 475200 & 763620 & 2147520\\
30 & \cellcolor{gray}15 & \cellcolor{yellow}128 & \cellcolor{light-gray}590 & \cellcolor{light-gray}2936 & 8228 & 31728 & 66968 & 221188 & 387948 & 1151568 & 1771848\\
31 & \cellcolor{gray}16 & \cellcolor{yellow}114 & \cellcolor{light-gray}704 & \cellcolor{light-gray}2460 & 10688 & 25960 & 92928 & 180180 & 568128 & 943800 & 2715648\\
32 & \cellcolor{gray}15 & \cellcolor{yellow}129 & \cellcolor{light-gray}605 & \cellcolor{light-gray}3065 & 8833 & 34793 & 75801 & 255981 & 463749 & 1407549 & 2235597\\
33 & \cellcolor{gray}16 & \cellcolor{yellow}115 & \cellcolor{light-gray}720 & \cellcolor{light-gray}2575 & 11408 & 28535 & 104336 & 208715 & 672464 & 1152515 & 3388112\\
34 & \cellcolor{gray}15 & \cellcolor{yellow}130 & \cellcolor{light-gray}620 & \cellcolor{light-gray}3195 & 9453 & 37988 & 85254 & 293969 & 549003 & 1701518 & 2784600\\
35 & \cellcolor{gray}16 & \cellcolor{yellow}116 & \cellcolor{light-gray}736 & \cellcolor{light-gray}2691 & 12144 & 31226 & 116480 & 239941 & 788944 & 1392456 & 4177056\\
36 & \cellcolor{gray}15 & \cellcolor{yellow}131 & \cellcolor{light-gray}635 & \cellcolor{light-gray}3326 & 10088 & 41314 & 95342 & 335283 & 644345 & 2036801 & 3428945\\
37 & \cellcolor{gray}16 & \cellcolor{yellow}117 & \cellcolor{light-gray}752 & \cellcolor{light-gray}2808 & 12896 & 34034 & 129376 & 273975 & 918320 & 1666431 & 5095376\\
38 & \cellcolor{gray}15 & \cellcolor{yellow}132 & \cellcolor{light-gray}650 & \cellcolor{light-gray}3458 & 10738 & 44772 & 106080 & 380055 & 750425 & 2416856 & 4179370\\
39 & \cellcolor{gray}16 & \cellcolor{yellow}118 & \cellcolor{light-gray}768 & \cellcolor{light-gray}2926 & 13664 & 36960 & 143040 & 310935 & 1061360 & 1977366 & 6156736\\
40 & \cellcolor{gray}15 & \cellcolor{yellow}133 & \cellcolor{light-gray}665 & \cellcolor{light-gray}3591 & 11403 & 48363 & 117483 & 428418 & 867908 & 2845274 & 5047278\\
41 & \cellcolor{gray}16 & \cellcolor{yellow}119 & \cellcolor{light-gray}784 & \cellcolor{light-gray}3045 & 14448 & 40005 & 157488 & 350940 & 1218848 & 2328306 & 7375584\\
42 & \cellcolor{gray}15 & \cellcolor{yellow}134 & \cellcolor{light-gray}680 & \cellcolor{light-gray}3725 & 12083 & 52088 & 129566 & 480506 & 997474 & 3325780 & 6044752\\
43 & \cellcolor{gray}16 & \cellcolor{yellow}120 & \cellcolor{light-gray}800 & \cellcolor{light-gray}3165 & 15248 & 43170 & 172736 & 394110 & 1391584 & 2722416 & 8767168\\
44 & \cellcolor{gray}15 & \cellcolor{yellow}135 & \cellcolor{light-gray}695 & \cellcolor{light-gray}3860 & 12778 & 55948 & 142344 & 536454 & 1139818 & 3862234 & 7184570\\
45 & \cellcolor{gray}16 & \cellcolor{yellow}121 & \cellcolor{light-gray}816 & \cellcolor{light-gray}3286 & 16064 & 46456 & 188800 & 440566 & 1580384 & 3162982 & 10347552\\
46 & \cellcolor{gray}15 & \cellcolor{yellow}136 & \cellcolor{light-gray}710 & \cellcolor{light-gray}3996 & 13488 & 59944 & 155832 & 596398 & 1295650 & 4458632 & 8480220\\
47 & \cellcolor{gray}16 & \cellcolor{yellow}122 & \cellcolor{light-gray}832 & \cellcolor{light-gray}3408 & 16896 & 49864 & 205696 & 490430 & 1786080 & 3653412 & 12133632\\
48 & \cellcolor{gray}15 & \cellcolor{yellow}137 & \cellcolor{light-gray}725 & \cellcolor{light-gray}4133 & 14213 & 64077 & 170045 & 660475 & 1465695 & 5119107 & 9945915\\
49 & \cellcolor{gray}16 & \cellcolor{yellow}123 & \cellcolor{light-gray}848 & \cellcolor{light-gray}3531 & 17744 & 53395 & 223440 & 543825 & 2009520 & 4197237 & 14143152\\
50 & \cellcolor{gray}15 & \cellcolor{yellow}138 & \cellcolor{light-gray}740 & \cellcolor{light-gray}4271 & 14953 & 68348 & 184998 & 728823 & 1650693 & 5847930 & 11596608\\
\hline \end{tabular}}

\tablelegend
\end{center}

 \newpage
\definecolor{light-gray}{gray}{0.80}

{\begin{center} \(k_{4}(\Gr_d(\R^{d+c}))\)\\ \medskip {\tiny
\begin{tabular}{|c|cccccccccc|}\hline
\diagbox{$d$}{$c$} & 1 & 2 & 3 & 4 & 5 & 6 & 7 & 8 & 9 & 10\\ \hline
1 & \cellcolor{cyan}2 & \cellcolor{cyan}3 & \cellcolor{cyan}4 & \cellcolor{cyan}5 & \cellcolor{cyan}6 & \cellcolor{cyan}7 & \cellcolor{cyan}8 & \cellcolor{cyan}9 & \cellcolor{cyan}10 & \cellcolor{cyan}11\\
2 & \cellcolor{cyan}3 & \cellcolor{cyan}6 & \cellcolor{cyan}10 & \cellcolor{cyan}15 & \cellcolor{cyan}21 & \cellcolor{cyan}28 & \cellcolor{cyan}36 & \cellcolor{cyan}45 & \cellcolor{cyan}55 & \cellcolor{cyan}66\\
3 & \cellcolor{cyan}4 & \cellcolor{cyan}10 & \cellcolor{cyan}20 & \cellcolor{cyan}35 & \cellcolor{cyan}56 & \cellcolor{cyan}84 & \cellcolor{cyan}120 & \cellcolor{cyan}165 & \cellcolor{cyan}220 & \cellcolor{cyan}286\\
4 & \cellcolor{cyan}5 & \cellcolor{cyan}15 & \cellcolor{cyan}35 & \cellcolor{cyan}70 & \cellcolor{cyan}126 & \cellcolor{cyan}210 & \cellcolor{cyan}330 & \cellcolor{green}495 & \cellcolor{green}715 & \cellcolor{green}1001\\
5 & \cellcolor{cyan}6 & \cellcolor{cyan}21 & \cellcolor{cyan}56 & \cellcolor{cyan}126 & \cellcolor{cyan}252 & \cellcolor{cyan}462 & \cellcolor{green}792 & \cellcolor{green}1287 & \cellcolor{green}2002 & \cellcolor{green}3003\\
6 & \cellcolor{cyan}7 & \cellcolor{cyan}28 & \cellcolor{cyan}84 & \cellcolor{cyan}210 & \cellcolor{cyan}462 & \cellcolor{green}924 & \cellcolor{green}1716 & \cellcolor{green}3003 & \cellcolor{green}5005 & \cellcolor{green}8008\\
7 & \cellcolor{cyan}8 & \cellcolor{cyan}36 & \cellcolor{cyan}120 & \cellcolor{cyan}330 & \cellcolor{green}792 & \cellcolor{green}1716 & \cellcolor{green}3432 & \cellcolor{green}6435 & \cellcolor{green}11440 & \cellcolor{green}19448\\
8 & \cellcolor{cyan}9 & \cellcolor{cyan}45 & \cellcolor{cyan}165 & \cellcolor{green}495 & \cellcolor{green}1287 & \cellcolor{green}3003 & \cellcolor{green}6435 & \cellcolor{green}12870 & \cellcolor{green}24310 & \cellcolor{green}43758\\
9 & \cellcolor{cyan}10 & \cellcolor{cyan}55 & \cellcolor{cyan}220 & \cellcolor{green}715 & \cellcolor{green}2002 & \cellcolor{green}5005 & \cellcolor{green}11440 & \cellcolor{green}24310 & \cellcolor{green}48620 & \cellcolor{green}92378\\
10 & \cellcolor{cyan}11 & \cellcolor{cyan}66 & \cellcolor{cyan}286 & \cellcolor{green}1001 & \cellcolor{green}3003 & \cellcolor{green}8008 & \cellcolor{green}19448 & \cellcolor{green}43758 & \cellcolor{green}92378 & \cellcolor{green}184756\\
11 & \cellcolor{cyan}12 & \cellcolor{cyan}78 & \cellcolor{green}364 & \cellcolor{green}1365 & \cellcolor{green}4368 & \cellcolor{green}12376 & \cellcolor{green}31824 & \cellcolor{green}75582 & \cellcolor{green}167960 & \cellcolor{green}352716\\
12 & \cellcolor{cyan}13 & \cellcolor{cyan}91 & \cellcolor{green}455 & \cellcolor{green}1820 & \cellcolor{green}6188 & \cellcolor{green}18564 & \cellcolor{green}50388 & \cellcolor{green}125970 & \cellcolor{green}293930 & \cellcolor{green}646646\\
13 & \cellcolor{cyan}14 & \cellcolor{cyan}105 & \cellcolor{green}560 & \cellcolor{green}2380 & \cellcolor{green}8568 & \cellcolor{green}27132 & \cellcolor{green}77520 & \cellcolor{green}203490 & \cellcolor{green}497420 & \cellcolor{green}1144066\\
14 & \cellcolor{cyan}15 & \cellcolor{cyan}120 & \cellcolor{green}680 & \cellcolor{green}3060 & \cellcolor{green}11628 & \cellcolor{green}38760 & \cellcolor{green}116280 & \cellcolor{green}319770 & \cellcolor{green}817190 & \cellcolor{green}1961256\\
15 & \cellcolor{cyan}16 & \cellcolor{cyan}136 & \cellcolor{green}816 & \cellcolor{green}3876 & \cellcolor{green}15504 & \cellcolor{green}54264 & \cellcolor{green}170544 & \cellcolor{green}490314 & \cellcolor{green}1307504 & \cellcolor{green}3268760\\
16 & \cellcolor{cyan}17 & \cellcolor{green}153 & \cellcolor{green}969 & \cellcolor{green}4845 & \cellcolor{green}20349 & \cellcolor{green}74613 & \cellcolor{green}245157 & \cellcolor{green}735471 & \cellcolor{green}2042975 & \cellcolor{green}5311735\\
17 & \cellcolor{cyan}18 & \cellcolor{green}171 & \cellcolor{green}1140 & \cellcolor{green}5985 & \cellcolor{green}26334 & \cellcolor{green}100947 & \cellcolor{green}346104 & \cellcolor{green}1081575 & \cellcolor{green}3124550 & \cellcolor{green}8436285\\
18 & \cellcolor{cyan}19 & \cellcolor{green}190 & \cellcolor{green}1330 & \cellcolor{green}7315 & \cellcolor{green}33649 & \cellcolor{green}134596 & \cellcolor{green}480700 & \cellcolor{green}1562275 & \cellcolor{green}4686825 & \cellcolor{green}13123110\\
19 & \cellcolor{cyan}20 & \cellcolor{green}210 & \cellcolor{green}1540 & \cellcolor{green}8855 & \cellcolor{green}42504 & \cellcolor{green}177100 & \cellcolor{green}657800 & \cellcolor{green}2220075 & \cellcolor{green}6906900 & \cellcolor{green}20030010\\
20 & \cellcolor{cyan}21 & \cellcolor{green}231 & \cellcolor{green}1771 & \cellcolor{green}10626 & \cellcolor{green}53130 & \cellcolor{green}230230 & \cellcolor{green}888030 & \cellcolor{green}3108105 & \cellcolor{green}10015005 & \cellcolor{green}30045015\\
21 & \cellcolor{cyan}22 & \cellcolor{green}253 & \cellcolor{green}2024 & \cellcolor{green}12650 & \cellcolor{green}65780 & \cellcolor{green}296010 & \cellcolor{green}1184040 & \cellcolor{green}4292145 & \cellcolor{green}14307150 & \cellcolor{green}44352165\\
22 & \cellcolor{cyan}23 & \cellcolor{green}276 & \cellcolor{green}2300 & \cellcolor{green}14950 & \cellcolor{green}80730 & \cellcolor{green}376740 & \cellcolor{green}1560780 & \cellcolor{green}5852925 & \cellcolor{green}20160075 & 64512240\\
23 & \cellcolor{cyan}24 & \cellcolor{green}300 & \cellcolor{green}2600 & \cellcolor{green}17550 & \cellcolor{green}98280 & \cellcolor{green}475020 & \cellcolor{green}2035800 & \cellcolor{green}7888725 & 28048800 & 52240890\\
24 & \cellcolor{cyan}25 & \cellcolor{green}325 & \cellcolor{green}2925 & \cellcolor{green}20475 & \cellcolor{green}118755 & \cellcolor{green}593775 & \cellcolor{green}2629575 & 10518300 & 22789650 & 75030540\\
25 & \cellcolor{cyan}26 & \cellcolor{green}351 & \cellcolor{green}3276 & \cellcolor{green}23751 & \cellcolor{green}142506 & \cellcolor{green}736281 & 3365856 & 8625006 & 31414656 & 60865896\\
26 & \cellcolor{cyan}27 & \cellcolor{green}378 & \cellcolor{green}3654 & \cellcolor{green}27405 & \cellcolor{green}169911 & 906192 & 2799486 & 11424492 & 25589136 & 86455032\\
27 & \cellcolor{cyan}28 & \cellcolor{green}406 & \cellcolor{green}4060 & \cellcolor{green}31465 & \cellcolor{light-gray}201376 & 767746 & 3567232 & 9392752 & 34981888 & 70258648\\
28 & \cellcolor{cyan}29 & \cellcolor{green}435 & \cellcolor{green}4495 & \cellcolor{light-gray}35960 & \cellcolor{light-gray}174406 & 942152 & 2973892 & 12366644 & 28563028 & 98821676\\
29 & \cellcolor{cyan}30 & \cellcolor{green}465 & \cellcolor{light-gray}4960 & \cellcolor{light-gray}31930 & \cellcolor{light-gray}206336 & 799676 & 3773568 & 10192428 & 38755456 & 80451076\\
30 & \cellcolor{cyan}31 & \cellcolor{yellow}496 & \cellcolor{light-gray}4526 & \cellcolor{light-gray}36456 & \cellcolor{light-gray}178932 & 978608 & 3152824 & 13345252 & 31715852 & 112166928\\
31 & \cellcolor{cyan}32 & \cellcolor{yellow}466 & \cellcolor{light-gray}4992 & \cellcolor{light-gray}32396 & \cellcolor{light-gray}211328 & 832072 & 3984896 & 11024500 & 42740352 & 91475576\\
32 & \cellcolor{gray}31 & \cellcolor{yellow}497 & \cellcolor{light-gray}4557 & \cellcolor{light-gray}36953 & 183489 & 1015561 & 3336313 & 14360813 & 35052165 & 126527741\\
33 & \cellcolor{gray}32 & \cellcolor{yellow}467 & \cellcolor{light-gray}5024 & \cellcolor{light-gray}32863 & 216352 & 864935 & 4201248 & 11889435 & 46941600 & 103365011\\
34 & \cellcolor{gray}31 & \cellcolor{yellow}498 & \cellcolor{light-gray}4588 & \cellcolor{light-gray}37451 & 188077 & 1053012 & 3524390 & 15413825 & 38576555 & 141941566\\
35 & \cellcolor{gray}32 & \cellcolor{yellow}468 & \cellcolor{light-gray}5056 & \cellcolor{light-gray}33331 & 221408 & 898266 & 4422656 & 12787701 & 51364256 & 116152712\\
36 & \cellcolor{gray}31 & \cellcolor{yellow}499 & \cellcolor{light-gray}4619 & \cellcolor{light-gray}37950 & 192696 & 1090962 & 3717086 & 16504787 & 42293641 & 158446353\\
37 & \cellcolor{gray}32 & \cellcolor{yellow}469 & \cellcolor{light-gray}5088 & \cellcolor{light-gray}33800 & 226496 & 932066 & 4649152 & 13719767 & 56013408 & 129872479\\
38 & \cellcolor{gray}31 & \cellcolor{yellow}500 & \cellcolor{light-gray}4650 & \cellcolor{light-gray}38450 & 197346 & 1129412 & 3914432 & 17634199 & 46208073 & 176080552\\
39 & \cellcolor{gray}32 & \cellcolor{yellow}470 & \cellcolor{light-gray}5120 & \cellcolor{light-gray}34270 & 231616 & 966336 & 4880768 & 14686103 & 60894176 & 144558582\\
40 & \cellcolor{gray}31 & \cellcolor{yellow}501 & \cellcolor{light-gray}4681 & \cellcolor{light-gray}38951 & 202027 & 1168363 & 4116459 & 18802562 & 50324532 & 194883114\\
41 & \cellcolor{gray}32 & \cellcolor{yellow}471 & \cellcolor{light-gray}5152 & \cellcolor{light-gray}34741 & 236768 & 1001077 & 5117536 & 15687180 & 66011712 & 160245762\\
42 & \cellcolor{gray}31 & \cellcolor{yellow}502 & \cellcolor{light-gray}4712 & \cellcolor{light-gray}39453 & 206739 & 1207816 & 4323198 & 20010378 & 54647730 & 214893492\\
43 & \cellcolor{gray}32 & \cellcolor{yellow}472 & \cellcolor{light-gray}5184 & \cellcolor{light-gray}35213 & 241952 & 1036290 & 5359488 & 16723470 & 71371200 & 176969232\\
44 & \cellcolor{gray}31 & \cellcolor{yellow}503 & \cellcolor{light-gray}4743 & \cellcolor{light-gray}39956 & 211482 & 1247772 & 4534680 & 21258150 & 59182410 & 236151642\\
45 & \cellcolor{gray}32 & \cellcolor{yellow}473 & \cellcolor{light-gray}5216 & \cellcolor{light-gray}35686 & 247168 & 1071976 & 5606656 & 17795446 & 76977856 & 194764678\\
46 & \cellcolor{gray}31 & \cellcolor{yellow}504 & \cellcolor{light-gray}4774 & \cellcolor{light-gray}40460 & 216256 & 1288232 & 4750936 & 22546382 & 63933346 & 258698024\\
47 & \cellcolor{gray}32 & \cellcolor{yellow}474 & \cellcolor{light-gray}5248 & \cellcolor{light-gray}36160 & 252416 & 1108136 & 5859072 & 18903582 & 82836928 & 213668260\\
48 & \cellcolor{gray}31 & \cellcolor{yellow}505 & \cellcolor{light-gray}4805 & \cellcolor{light-gray}40965 & 221061 & 1329197 & 4971997 & 23875579 & 68905343 & 282573603\\
49 & \cellcolor{gray}32 & \cellcolor{yellow}475 & \cellcolor{light-gray}5280 & \cellcolor{light-gray}36635 & 257696 & 1144771 & 6116768 & 20048353 & 88953696 & 233716613\\
50 & \cellcolor{gray}31 & \cellcolor{yellow}506 & \cellcolor{light-gray}4836 & \cellcolor{light-gray}41471 & 225897 & 1370668 & 5197894 & 25246247 & 74103237 & 307819850\\
\hline \end{tabular}}

\tablelegend
\end{center}

\end{document}